\def\opi{{\mathrm{i}\mkern1mu}}
\def\eps{\epsilon}
\newcommand{\bR}{\mathbb{R}}
\newcommand{\bC}{\mathbb{C}}
\newcommand{\real}{\operatorname*{Re}}
\newcommand{\imag}{\operatorname*{Im}}
\newcommand{\iu}{\mathrm{i}}
\newcommand{\tol}{tol}
\newcommand{\Id}{\mathrm{I}}
\newcommand{\e}{\mathrm{e}}
\renewcommand{\epsilon}{\varepsilon}
\newlist{aims}{enumerate}{1}
\setlist[aims,1]{
	label={Aim~\arabic*},
	leftmargin=*,
	align=left,
	labelsep=2mm,
}
\newtheorem{theorem}{Theorem}
\newtheorem{lemma}[theorem]{Lemma}
\newtheorem{remark}[theorem]{Remark}
\title{Pseudospectral roaming contour integral methods for convection-diffusion equations}
\author{Nicola Guglielmi\footnotemark[1] \and Mar\'ia L\'opez-Fern\'andez\footnotemark[2] \and Mattia Manucci\footnotemark[3]}
\begin{document}

\maketitle
\renewcommand{\thefootnote}{\fnsymbol{footnote}}
\footnotetext[1]{Gran Sasso Science Institute,
                 via Crispi 7,
                 L'Aquila, Italy. Email: {\tt nicola.guglielmi@gssi.it}}
\footnotetext[2]{Departamento de An\'alisis Matem\'atico, Estad\'istica e I.O. y Matem\'atica Aplicada.
Facultad de Ciencias. Universidad de M\'alaga.
Bulevar Louis Pasteur, 31
29010  M\'alaga,  Spain. Email: {\tt maria.lopezf@uma.es}}
\footnotetext[3]{Gran Sasso Science Institute,
                 via Crispi 7,
                 L'Aquila, Italy. Email: {\tt mattia.manucci@gssi.it}}

\renewcommand{\thefootnote}{\arabic{footnote}}

\begin{abstract}
We generalize ideas in the recent literature and develop new ones in order to propose a general class of
contour integral methods for linear convection--diffusion PDEs and in particular for those arising in finance.
These methods aim to provide a numerical approximation of the solution by computing its
inverse Laplace transform. The choice of the integration contour is
determined by the computation of a few suitably weighted pseudo-spectral level sets of the leading operator of the equation.
Parabolic and hyperbolic profiles proposed in the literature are investigated and compared
to the elliptic contour originally proposed by Guglielmi, L\'opez-Fern\'andez and Nino 2020, see \cite{GLN20}.
In summary, the article
\begin{aims}
\item[(i) ]   provides a comparison among three different integration profiles;
\item[(ii) ]  proposes a new fast pseudospectral roaming method;
\item[(iii) ] optimizes the selection of time windows on which one may arbitrarily approximate the
solution by no extra computational cost with respect to the case of a fixed time instant;
\item[(iv) ]  focuses extensively on computational aspects and it is the reference of the MATLAB code  \cite{GLMcode}, where all algorithms described here are implemented.
\end{aims}

\end{abstract}

{\bf Keywords:} Contour integral methods, weighted pseudospectra, inverse Laplace transform, convection-diffussion equations, elliptic contour, parabolic contour, hyperbolic contour, quadrature for analytic integrands.

{\bf AMS subject classifications:} 65L05, 65R10, 65J10,65M20, 91-08.

\pagestyle{myheadings}
\thispagestyle{plain}
\markboth{N.~Guglielmi, M.~L\'opez-Fern\'andez and M.~Manucci}{Pseudospectral roaming contour integral methods for convection-diffusion equations}

\section{Introduction}

We consider convection diffusion PDEs of the form
\begin{eqnarray}
&& \frac{\partial U}{\partial t}(x,t) = {\cal{A}} (x) U(x,t) + f(x,t),
\nonumber
\\
&& + \mathrm{B.C.}
\label{eq:pde}
\\
&& U(x,0) = U_0(x)
\nonumber
\end{eqnarray}
with ${\cal{A}}$ a linear second order elliptic operator. After discretizing the problem in space, we study efficient numerical integrators for the Cauchy problem
\begin{equation}\label{eq:cauchy}
\dot{u} = A u + b(t), \qquad u(0)=u_0, \qquad t > 0,
\end{equation}
with $A$ representing a suitable discretization of the  elliptic operator $\cal{A}$ and $b$ is a source term which possibly includes boundary contributions. We are particularly interested in equations arising in mathematical finance, such as Black--Scholes, Heston or Heston-Hull-White equations \cite{BS,He,Hu}, but our approach is by no means restricted to them.

In order to approximate the solution $u(t)$ to \eqref{eq:cauchy} one may use Runge-Kutta methods, multistep integrators as well as splitting schemes.
The drawback of these time-stepping schemes is that in order to approximate the solution at a certain time $T=t_n$, one needs to compute an approximation
of the solution at grid points $0<t_1<t_2<\ldots<t_n$, which would be particularly demanding if $T$ is large.
As an alternative, it is possible to derive methods based on the Laplace transform and its numerical inversion, which do not advance on a grid.
In the literature this approach has been widely studied for pure diffusion equations (see e.g.  \cite{GaMa,LP,LPS,SST}) and for convection diffusion equations recently in \cite{GLN20}. An important case is when the time $T$ at which one is interested to determine the solution is not known exactly but is uncertain although it belongs to a certain time window
of moderate size. In such case it would be convenient to develop methods which do not require substantial additional computations with respect to the model case when $T$ is fixed a priori. This is another goal of this article, i.e. to discuss and analyze methods able to approximate the solution on suitable time windows.

In the sequel of the article - when not indicated differently - the considered norm is the spectral one, that is the matrix norm induced by the vector Euclidean norm.

The magnitude of the resolvent norm $\left\|\left(zI-A\right)^{-1}\right\|$ has a crucial role in the rate of convergence of any contour integral method based on Laplace transformation. Due to this, the choice and parametrization of the integration contour is of main importance. In a recent paper \cite{GLN20}, an elliptic profile has been proposed, in connection to the knowledge of the $\eps$-pseudospectrum of $A$ (see \cite{TreE})
\begin{eqnarray}
\sigma_\eps(A) & = & \left\{ z \in \bC \ : \left\|\left(z\Id-A\right)^{-1}\right\| \le \frac{1}{\epsilon}
\right\}, \label{eq:epsps}
\end{eqnarray}
for suitable values $\eps > 0$.
Since $A$ is in general non-normal, due to the convection terms in the operator $\mathcal{A}$, the pseudospectrum may increase fast around the spectrum of $A$, making the problem challenging.
We assume the existence of the Laplace transform of $b$ and that it admits a bounded analytic extension to a suitable region of the complex plane outside the spectrum of $A$.
We then apply the Laplace transform to \eqref{eq:cauchy}, which yields, for the Laplace transform of $u$, $\hat{u}={\cal L}(u)$,
\begin{equation}\label{eq:LT}
\hat{u}(z)=\left(zI-A\right)^{-1}\left(u_0+\hat{b}(z)\right)\,,
\end{equation}
where $\hat{b}={\cal L}(b)$ and $I$ stands for the identity matrix.

After solving \eqref{eq:LT}, we reobtain $u$ by considering the inverse Laplace transform
\begin{equation}\label{eq:bromwich}
u(t)=\frac{1}{2\pi \opi}\int_{\Gamma}\e^{zt}\hat{u}(z)\,dz,
\end{equation}
being the contour $\Gamma$ an open piecewise smooth curve running from $-\opi\infty$ to $+\opi\infty$ surrounding all singularities of $\hat{u}$.
To approximate the Bromwich integral \eqref{eq:bromwich}, we parameterize the integration contour $\Gamma$ by $z=z(x)$, $x\in \mathbb{R}$, for a suitable mapping $z(x)$, so that
\[
\int_{\Gamma}\e^{zt}\hat{u}(z)\,dz = \int\limits_{\mathbb{R}} G(x) dx,
\]
with $G$ appropriately defined. Since we are interested in approximating $u(t)$ within precision $\tol$, we will only consider the portion of the Bromwich integral parameterized in $[-c\pi,c\pi]$, this is,
\[
I = \int\limits_{\bR} G(x) dx \approx \int\limits_{-c \pi}^{c \pi} G(x) dx,
\]
for a certain truncation parameter $c \in (0,c_{\text{max}})$, which we determine by the estimate
\[
| G(c \pi) | = \tol
\]
for $\tol$ the desired accuracy.
Finally, the application of a quadrature formula to approximate \eqref{eq:bromwich} provides a numerical approximation of $u$, for a given time $t$,
or even time windows of the form $[t_0, \Lambda t_0]$, $\Lambda > 1$, without need of
computing it at intermediate time instants.
An application of the trapezoidal rule
\begin{equation} \label{eq:tr}
	I_N=\frac{2 c \pi}{N}\sum_{j=1}^{N-1}G(\xi_j)\, \quad \mbox{with } \ \xi_j=-c \pi + j \frac{2 c \pi }{N},\quad j=1,\ldots,N-1.
\end{equation}
provides the desired approximation $I_N$ of $I$.
Note that the computation of each term in the summation \eqref{eq:tr} requires the solution of a shifted linear system $A - z(\xi_j) I$.
An advantage of the method we propose is that these computations can be done in parallel. Furthermore, if the integrand is conjugate symmetric, the number of addends and thus, the number of linear systems, can be halved. This is often the case in applications, such as the ones we consider here.

Assuming that the Laplace transform can be analytically extended to the left half of the complex plane and that this extension is properly bounded with respect to $z$, several authors have proposed different contour profiles and parametrizations for $\Gamma$.  We refer the reader to the recent article \cite{GLN20} for a detailed review of the
literature concerning the crucial choice of the profile $\Gamma$.
In this paper we extend the results of \cite{GLN20} by considering not only elliptic but also parabolic and hyperbolic profiles, which we compare.
The parametrization of all contours is optimized by using the knowledge of the pseudospectrum of $A$ on a region of the complex plane surrounding the spectrum of $A$. A novel Newton iteration is developed to obtain the required knowledge of pseudospectral level sets. In this way we are able to determine more specifically, accurately and efficiently the required pseudospectral level curves and avoid the use of the software {\tt eigtool} \cite{eigtool} as it was done in \cite{GLN20}.
We notice that since the exponential factor in \eqref{eq:bromwich} reduces the norm of the integrand function when $\real(z)$ is sufficiently large and negative, we have to control
the pseudospectrum of $A$ only in a vertical strip of the complex plane.
A main advantage of the method we discuss is that it provides an approximation of the solution by a prescribed accuracy $\tol$, simply increasing the number of quadrature
points on the integration contour, without changing the integration profile and taking advantage of previous computations.
The paper is organized as follows.
In Section \ref{sec:contour} we describe the three contours we consider (elliptic, parabolic and hyperbolic).
In Section \ref{sec:pseudospec} we present a new method to obtain approximations of pseudospectral level sets, which does not require making use of Eigtool.
In Section \ref{sec:BS} we study in full detail the pseudospectra of the 1D Black and Scholes operator.
In Section \ref{sec:param} we provide the determination of the parameters characterizing the whole procedure based on sharp error estimates.
In Section \ref{sec:compar} we compare the profiles and present some numerical illustrations.
Finally, in Section \ref{sec:code} we focus our attention on implementation issues and present a Matlab code aimed to approximate the solution of the problem by means of any of the considered methods.

\section{The integration contours} \label{sec:contour}

We propose contours $\Gamma$ in \eqref{eq:bromwich} which are either elliptic, parabolic or hyperbolic arcs, possibly linked to half-lines.

\subsection{Elliptic profile: a review from \cite{GLN20}}
In \cite{GLN20}, $\Gamma$ is parameterized by
\begin{equation}\label{eq:Gamma}
z(x)=\left\{\begin{array}{ll}
\ell_1(x), \quad    & x \le -\frac{\pi}{2}, \\[1mm]
z(x), \quad  & -\frac{\pi}{2} \le x \le \frac{\pi}{2}, \\[1mm]
\ell_2(x), \quad  & x\ge \frac{\pi}{2}, \\
\end{array}\right.
\end{equation}
where, for constant parameters $A_1, A_2, A_3$ to be determined,
\[
z(x) = A_1\cos x+\opi A_2\sin x+A_3
\]
parametrizes an elliptic arc and
\[
\ell_{1}(x) = A_3 + x+\frac{\pi}{2}-\opi\left(A_2 - d\left(x+\frac{\pi}{2}\right)\right),\quad
\ell_2(x) = A_3-x+\frac{\pi}{2}+\opi\left(A_2+d\left(x-\frac{\pi}{2}\right)\right)
\]
parametrize two half-lines, see Figure~\ref{plot_profile1}.
\begin{figure}\label{plot_profile1}
\begin{center}
\includegraphics[viewport= 0 100 600 350, width=.75\textwidth]{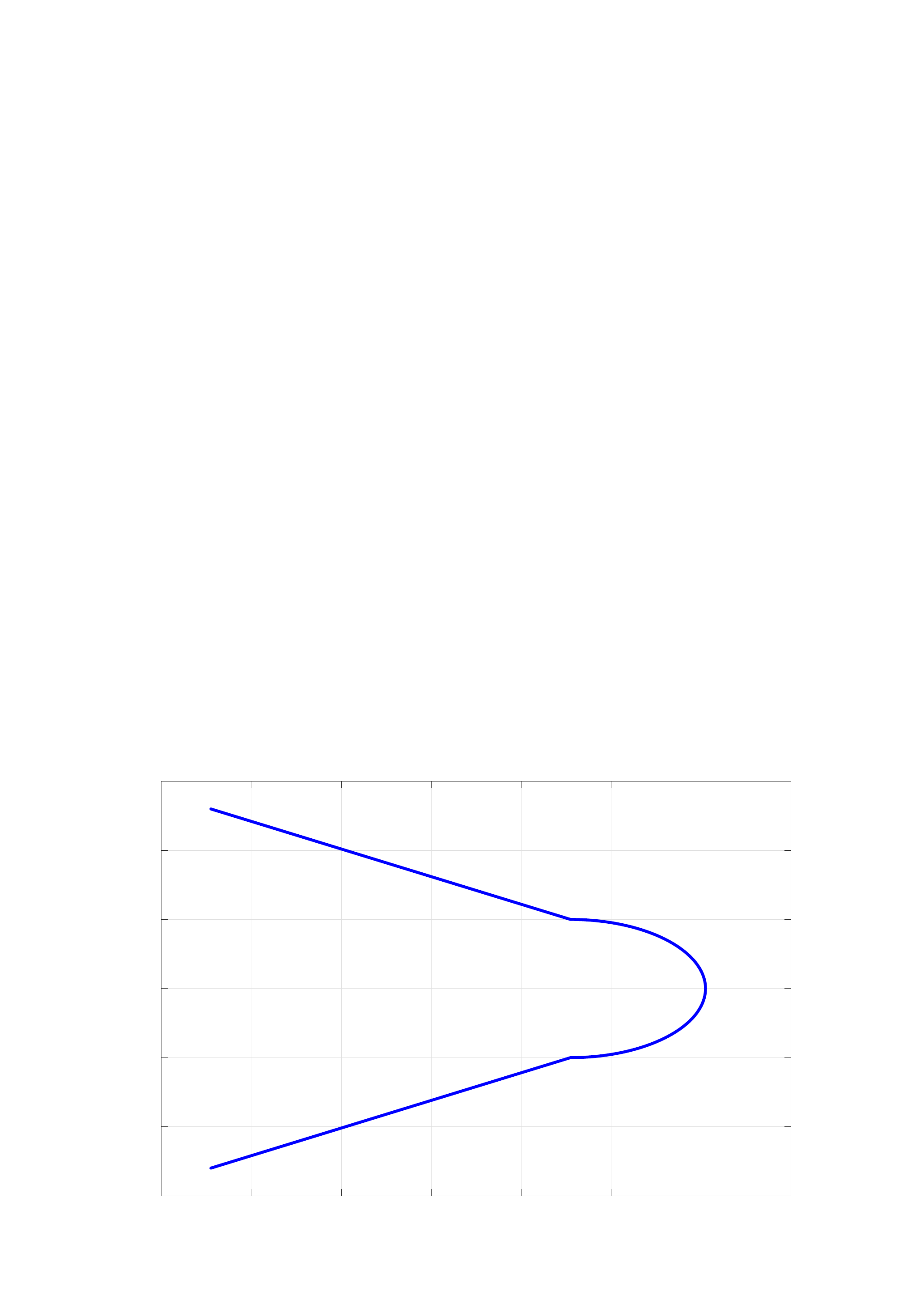}
\end{center}
\caption{The integration profile $\Gamma$ from \cite{GLN20}.}
\end{figure}
The choice of the parameters $A_1, A_2$ ad $A_3$ is discussed on \cite{GLN20} and is fundamentally
based on the knowledge of the $\eps$-pseudospectrum of $A$ in a rectangular region surrounding the
rightmost section of the spectrum.

The only section of the integration contour $\Gamma$ \eqref{eq:Gamma} that it is actually  used in practise is the arc of ellipse parameterized by $z$. In order to improve the performance of the quadrature, $z$ is extended to a rectangle in the complex plane by
\begin{equation}\label{eq:mapping}
	z(x+iy)=A_1(y)\cos x+iA_2(y)\sin x+A_3(y), \quad x\in \left[-\frac{\pi}{2},\frac{\pi}{2}\right], y\in[-a,a],
\end{equation}
for a certain parameter $a>0$ to be determined. We require \eqref{eq:mapping} to be holomorphic in the rectangle
\[
R = [-\pi/2,\pi/2] \times [-\iu a, \iu a]
\]
and thus impose the Cauchy-Riemann equations. In this way we obtain that $A_3$ is necessarily a constant,
\begin{eqnarray}
\label{eq:A1}
A_1(y)&=& a_1\e^y+a_2\e^{-y},\\
\label{eq:A2}
A_2(y)&=& a_2\e^{-y}-a_1\e^y,
\end{eqnarray}
with $a_1$ and $a_2$ real constants. The resulting mapping turns out to be entire. The holomorphy of $z$ in the rectangle leads to the exponential convergence of the trapezoidal rule when it is applied to the integral resulting after parametrizing the elliptic contour by $z(x)$, being the rate of convergence increasing with $a$, see \cite{GLN20}.
\begin{figure}[h!]
\begin{center}
\includegraphics[viewport=69   296   525   544, scale=0.5]{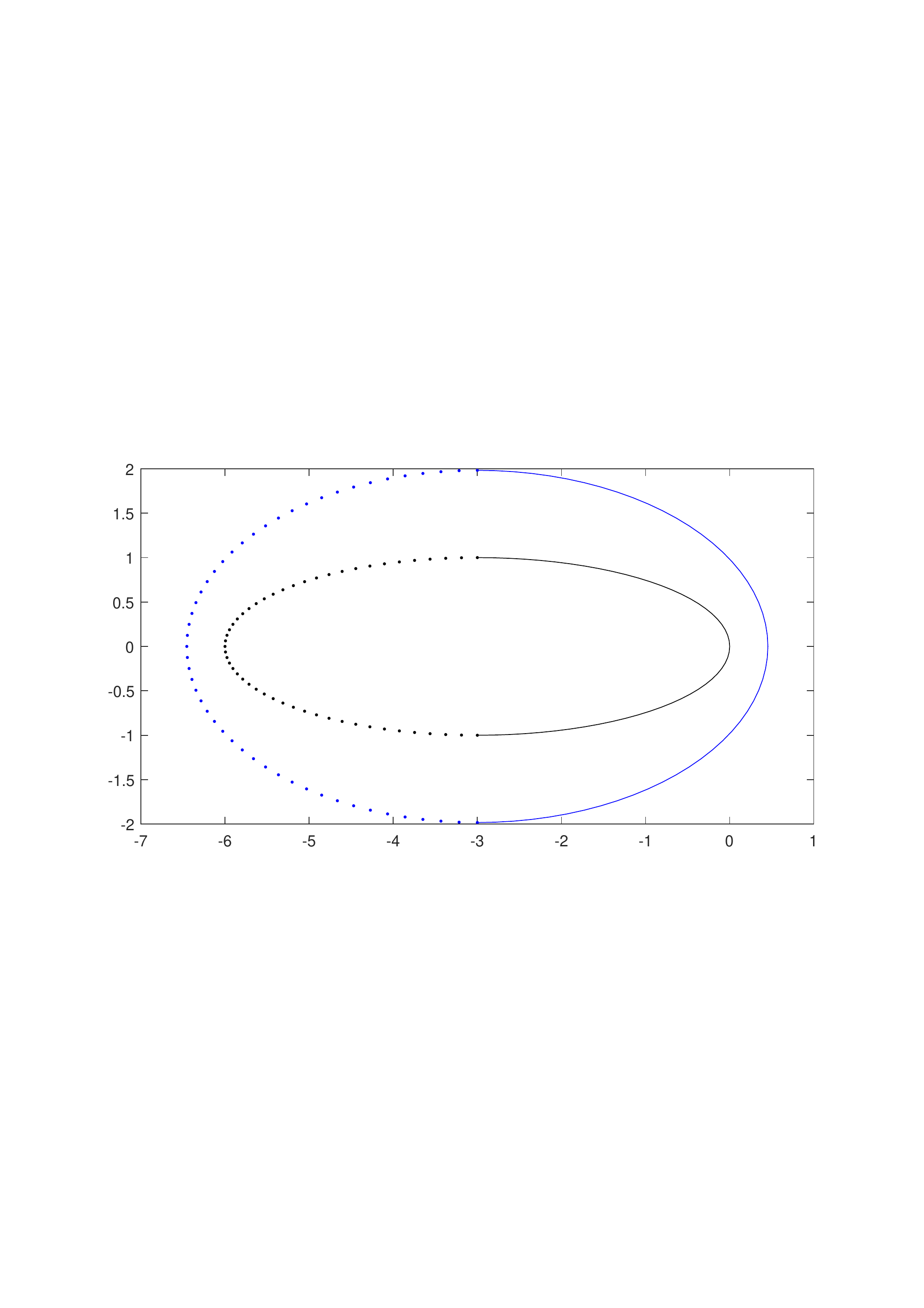}
\caption{The ellipse $\Gamma_{left}$ (in black) and the integration profile (in blue). \label{fig_ell}}
\end{center}
\end{figure}
The rectangle $R$ is mapped into an elliptic ring-shaped region. In particular the upper horizontal side of the rectangle is mapped into the inner ellipse $\Gamma_{left}$ (black) in Figure \ref{fig_ell} and is selected in a way that its rightmost section (continuous line) is external to the $\eps$-pseudospectrum for a suitable value $\eps$.
In order to select $\Gamma_{left}$, we fix the center of the ellipse $z^L$, its right intersection with the real axis ${z^R}$ and one interpolation point ${z^B}$.
In particular $z^L$ is such that ${\rm e}^{z^L t} \approx {\rm eps}$ (the working precision) and $z^R$ is the rightmost intersection point of the $\eps$-pseudospectrum of $A$ and the real axis. The interpolation point ${z^B}=d + \iu r$ is chosen in such a way that the ellipse encloses the $\eps$-pseudospectrum of $A$ for a suitable $\eps$, as well as the possible singularities of $\hat{b}$.
Next the half-elliptic integration profile
\begin{equation} \label{eq:Gamma1}
\Gamma: \quad z(x)=(a_1+a_2)\cos x+\opi(a_2-a_1)\sin x+A_3,
\end{equation}
is determined, with coefficients $a_1,a_2,A_3$ depending on the unique free parameter $a$ (see \cite{GLN20} for the details).
Indeed, imposing the ellipse $\Gamma_{left}$ to be centered at $z^L$ and to pass through the points $z^R$ and $z^B=d+\opi r $, we get
\begin{equation}\label{eq1}
a_1\e^a+a_2\e^{-a}=z^R-z^L,\quad
a_2\e^{-a}-a_1\e^a=\frac{r}{\sin(\theta)},\quad
A_3=z^L,
\end{equation}
where
\begin{equation*} 
\theta=\arccos\left(\frac{d-z^L}{z^R-z^L}\right)\,.
\end{equation*}

Solving \eqref{eq1} for $a_1,a_2, A_3$ yields
\begin{eqnarray}\label{a1ell}
a_1 & = & \frac{\e^{-a}}{2}\left(z^R-z^L-\frac{r}{\sin(\theta)}\right) \\
\label{a2ell}
a_2 & = & \frac{\e^{ a}}{2}\left(z^R-z^L+\frac{r}{\sin(\theta)}\right) \\
A_3 & = & z^L \nonumber
\end{eqnarray}
which only depend on the real parameter $a$.
\subsection{Parabolic profile}
For $y$ fixed, the mapping
\begin{equation} \label{eq:par}
z(x + \iu y) = - x^2 - 2 \iu x A_2(y) + A_1(y), \quad x\in \bR,
\end{equation}
defines a parabola symmetric with respect to the real axis in the complex plane. In order to obtain a holomorphic parametrization of the parabola, we impose the Cauchy-Riemann equations to determine $A_1$ and $A_2$. This yields
\begin{eqnarray}
\nonumber
A_1(y) & = & y^2 + 2 a_1 y + a_2, \\[2mm]
\nonumber
A_2(y) & = & y + a_1,
\end{eqnarray}
with real constants $a_1$ and $a_2$. Since the parabolic profile is symmetric with respect to the real axis we only need of two points in order to determine it uniquely.
We proceed by constructing the conformal mapping
\begin{equation} \nonumber
z:\bR\times[-\opi a,\opi a]\rightarrow \mathbb{C}
\end{equation}
for a certain positive $a$.
Proceeding analogously as for the elliptic profile, we call $\Gamma_{left} = z(\bR + \opi a)$, this is, the parabola limiting to the left the image by $z$ of the horizontal strip $|\imag y| \le a$. We impose the vertex of $\Gamma_{left}$ to be the rightmost intersection point of the $\eps$-pseudospectrum of $A$, that we call $z^R$, and a further
interpolation control point $z^B = d + \iu r$. In this way we obtain the uniparametric family of values
\begin{eqnarray}
a_1 & = & -\frac{r}{2 \sqrt{ z^R - d}} - a,
\label{a1para}\\[2mm]
a_2 & = & z^R - a^2 - 2 a a_1 \label{a2para},
\end{eqnarray}
depending on the free parameter $a$, which is the band width of the analyticity domain of $z$.
\subsection{Hyperbolic profile}
As it was done in \cite{LP}, we set $w=x+iy$ and notice that the function {of $w$}
\[{
-\sin(\alpha+\opi w) = \phi+\opi \psi, \quad \mbox{with }\ \phi, \psi \in \bR,
}\]
maps the horizontal line $\imag w=y$ into the left branch of the hyperbola
\[{
\left(\frac{\phi}{\sin(\alpha-y)} \right)^2- \left(\frac{\psi}{\cos(\alpha-y)} \right)^2 =1,
}\]
whose asymptotes are given by {$\phi + \opi \psi$} with
\[{
\psi = \pm\cot(\alpha-y) \phi.
}\]

In order to better control the position of the center, the length of the horizontal semi axis and the angle of the asymptotes, we start here from the more general expression
\begin{equation}\label{genhyp}
z(x+\opi y) = A_3(y) - A_2(y)\sin(A_1(y) -y + \opi x ).
\end{equation}
After imposing the Cauchy-Riemann equations we actually obtain that $A_1$, $A_2$ and $A_3$ must be constant with respect to $y$, leaving a map of the form
\begin{equation}\label{defhyp}
z(x+\opi y) = a_3 - a_2 \sin(a_1-y)\cosh x - \opi a_2 \cos(a_1-y) \sinh x.
\end{equation}
To obtain optimal error estimates, we need to control the image of the horizontal strip $|\imag y|\le a$ under $z$, which is the region in the complex planed limited by the two branches of hyperbola $z(x-\opi a)$ (the one closest and furthest to the left) and $z(x+\opi a)$, $x\in \bR$. Similarly to what we do for the other type of contours, we prescribe $z(x-\opi a)$, $x\in \bR$, to be an appropriate critical hyperbola $\Gamma_{left}$, with vertex at ${z^R}$, center at ${z^C}$ and passing through a third point $d+\opi r$. In this way we obtain
\begin{align}
a_3&= {z^C} \label{a3hyp}\\
a_3-a_2 \sin(a_1+a) &= z^R  \quad \Rightarrow \quad a_2=\frac{z^C-z^R}{\sin(a_1+a)} \label{a2hyp}\\
a_3 - a_2 \sin(a_1+a)\cosh x &=d  \quad \Rightarrow \quad  z^C-(z^C-z^R) \cosh x = d \nonumber\\
-a_2 \cos(a_1+a) \sinh x &=r \quad \Rightarrow \quad  -(z^C-z^R)\cot(a_1+a) \sinh x = r, \nonumber
\end{align}
so that
\[
\tan(a_1+a) = \frac{z^R-z^C}{r}\sqrt{\cosh^2 x-1}=\frac{z^R-z^C}{r}\sqrt{\left(\frac{d-z^C}{z^R-z^C} \right)^2 -1}
\]
and
\begin{equation}\label{a1hyp}
a_1= \arctan\left( \frac{1}{r} \sqrt{(d-z^C)^2-(z^R-z^C)^2} \right)-a.
\end{equation}
Equations \eqref{a1hyp}, \eqref{a2hyp} and \eqref{a3hyp} give a uniparametric family of solutions for the parameters $a_1, a_2$ and $a_3$, respectively, depending on the width $a$ of the strip of analyticity of the mapping $z$.
We notice that the orientation we need to invert the Laplace transform is actually the opposite to the one in \eqref{defhyp}, as $x$ runs from $-\infty$ to $\infty$. This is resolved by simply taking the conjugate of \eqref{defhyp} as parametrization.

\section{Roaming pseudospectral sets}
\label{sec:pseudospec}
We consider here the case of a parabolic contour. 
The idea can be extended in a straightforward way to the elliptic and the hyperbolic contour.
We start from an initial internal parabola uniquely identified by a prescribed interpolation point $w=d+\iu r$ (together with the vertex $z^R$, which we consider
fixed). {Since the parabola is symmetric with respect to the real axis, we only work with its upper half, with positive imaginary part, and consider a set of $M$ points $z_k = \phi_k +\opi \psi_k$, $k=1,\ldots,M,$ on this curve, increasingly ordered according to their real parts $\phi_k$.}
{The parametric form of the {\em inner} parabola is
\begin{equation} \label{eq:innerp}{
z(x) = -x^2 + z^R + \frac{\iu r x}{\sqrt{z^R-d}}, \qquad x \in \bR.
}
\end{equation}
{Setting as in the previous section $z(x)=\phi+\opi \psi$, with $\psi>0$, this means that if we fix the abscissa $\phi=\real{(z)}$ we obtain
\[
\phi = z^R - x^2 \qquad \Longrightarrow \qquad x = \sqrt{z^R -{\phi}}
\]
}
which uniquely defines the argument of the parametrization $x > 0$, and consequently
\[
{\psi} = \frac{r x}{\sqrt{z^R-d}},
\]
which depends on $r$ and $d$. We easily obtain
{\begin{eqnarray}
\frac{\partial \psi}{\partial d} & = & \frac{x  r}{2 (z^R-d)^{3/2}}
\nonumber
\\
\frac{\partial \psi}{\partial r} & = & \frac{x }{\sqrt{z^R-d}}.
\nonumber
\end{eqnarray}}}
We discuss two possible approaches and after numerical simulations we have adopted the second in our code. The idea is that of modifying the interpolation point $w = d + \iu r$, which determines the parabola (together with the vertex $z^R$, which we consider fixed) by using variational results for simple singular values. In the sequel we shall make use of the following classical result on the derivative of a simple eigenvalue (see e.g. \cite{K,HJ}).
\begin{lemma}
	\label{lemma:dsv}
	Let $D(t)$ be a differentiable matrix-valued function in a neighborhood of $t_0$.
	Let
	\begin{equation}
	D(t) = U(t) \Sigma(t) V(t)^{*} = \sum\limits_{i} u_i(t) \sigma_i(t) v_i(t)^*
	\label{eq:svd}
	\end{equation}
	be a smooth (with respect to $t$) singular value decomposition of the matrix $D(t)$ and
	$\sigma(t)$ be a certain singular value of $D(t)$ converging to a simple singular value  $\hat \sigma \ne 0$ of $D_0 = D(t_0)$.
	
	If $\hat u, \hat v$ are the associated left and right singular vectors, respectively,
	the function $\sigma(t)$  is differentiable near $t=t_0$ with
	\begin{equation} \label{der:sigma}
	\dot{\sigma}(t_0) = \real \bigl( \hat u^{*} \dot{D}_0 \hat v \bigr) \qquad \mbox{\rm with} \ \dot{D}_0 = \dot{D}(t_0) .
	\end{equation}
\end{lemma}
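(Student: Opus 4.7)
The plan is to differentiate the identity $\sigma(t)=u(t)^{*}D(t)v(t)$ at $t_{0}$ and exploit the orthonormality of the singular vectors together with the defining equations $D v=\sigma u$ and $D^{*} u=\sigma v$. Because the target singular value $\hat\sigma$ is simple and nonzero, standard perturbation theory guarantees that the smooth SVD hypothesized in \eqref{eq:svd} can be locally arranged so that the branch $\sigma(t)$ and the corresponding columns $u(t),v(t)$ of $U(t),V(t)$ are differentiable near $t_{0}$, with $u(t_{0})=\hat u$ and $v(t_{0})=\hat v$; this is the only nontrivial regularity input and it is already encoded in the lemma's assumption, so I would only quote it.

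Next I would compute, using the product rule,
\begin{equation*}
\dot\sigma(t)=\dot u(t)^{*}D(t)v(t)+u(t)^{*}\dot D(t)v(t)+u(t)^{*}D(t)\dot v(t).
\end{equation*}
Substituting $D(t)v(t)=\sigma(t)u(t)$ in the first term and $u(t)^{*}D(t)=\sigma(t)v(t)^{*}$ (from $D(t)^{*}u(t)=\sigma(t)v(t)$) in the third term gives
\begin{equation*}
\dot\sigma(t)=\sigma(t)\bigl(\dot u(t)^{*}u(t)+v(t)^{*}\dot v(t)\bigr)+u(t)^{*}\dot D(t)v(t).
\end{equation*}

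The key observation is that differentiating $u(t)^{*}u(t)=1$ yields $\dot u(t)^{*}u(t)+u(t)^{*}\dot u(t)=0$, i.e.\ $\dot u(t)^{*}u(t)$ is purely imaginary, and likewise for $v(t)^{*}\dot v(t)$. Since $\sigma(t)\in\bR$, the first summand on the right-hand side is purely imaginary. Taking real parts on both sides, and recalling that $\sigma(t)$ itself is real so $\dot\sigma(t)=\real\,\dot\sigma(t)$, this summand vanishes and we obtain at $t=t_{0}$
\begin{equation*}
\dot\sigma(t_{0})=\real\bigl(\hat u^{*}\dot D_{0}\hat v\bigr),
\end{equation*}
which is exactly \eqref{der:sigma}.

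I do not expect any real obstacle beyond the smooth-SVD assumption itself; the rest is a short, mechanical calculation. The one point that deserves care is ensuring that the phases of $u(t)$ and $v(t)$ are chosen consistently so that $\sigma(t)=u(t)^{*}D(t)v(t)$ remains real (not merely $|u(t)^{*}D(t)v(t)|$); simplicity of $\hat\sigma$ makes this normalization possible on a neighbourhood of $t_{0}$, and once it is in force the argument above applies verbatim.
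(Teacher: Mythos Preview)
Your argument is correct, but it is genuinely different from the paper's. The paper does not differentiate $\sigma(t)=u(t)^{*}D(t)v(t)$ directly; instead it squares, observing that $\sigma(t)^{2}$ is an eigenvalue of $D(t)D(t)^{*}$ with left and right eigenvector $\hat u$, and then invokes the standard formula for the derivative of a simple eigenvalue (Horn--Johnson, Theorem~6.3.12). This yields $2\hat\sigma\,\dot\sigma(t_0)=2\,\real\bigl(\hat u^{*}\dot D_0 D_0^{*}\hat u\bigr)$, and the identity $D_0^{*}\hat u=\hat\sigma\hat v$ converts the right-hand side into $2\hat\sigma\,\real(\hat u^{*}\dot D_0\hat v)$, after which one divides by $2\hat\sigma$. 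Your route avoids this detour through $DD^{*}$: you differentiate once, use $Dv=\sigma u$ and $D^{*}u=\sigma v$ to collapse two of the three product-rule terms, and then kill the remaining contribution $\sigma(\dot u^{*}u+v^{*}\dot v)$ by noting it is purely imaginary while $\dot\sigma$ is real. Your computation is slightly more elementary and, notably, does not actually use $\hat\sigma\ne 0$ in the algebra (the paper needs it to divide by $2\hat\sigma$); on the other hand, the paper's approach leans on a textbook eigenvalue-derivative result and so requires less care about phase conventions for $u(t),v(t)$. Both are valid and short.
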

\begin{proof}
	We have that $\sigma(t)^2$ is an eigenvalue of $D(t) D(t)^* = U(t) \Sigma(t)^2 U(t)^*$.
	
	At $t=t_0$ the left and right eigenvectors associated to $\hat \sigma^2$ coincide and are equal to $\hat u$, having unit norm.
	Note that $\hat u$ is a certain column of $U(t_0)$ determined by the position of $\hat \sigma^2$ in the diagonal matrix $\Sigma(t_0)^2$.
	
	Then - omitting the ubiquitous dependence on $t$ - by \cite[Theorem 6.3.12]{HJ} we get for $t=t_0$
	\begin{equation*}
	\frac{d}{dt} \sigma^2(t) \Big|_{t=t_0} = 2 \hat\sigma \dot{\sigma}(t_0) = \frac{\hat u^* \bigl( \dot{D}_0 D^*_0 + D_0 \dot D^*_0 \bigr) \hat u}{\hat u^* \hat u} =
	2 \real \bigl( \hat u^* \dot{D}_0 D^*_0 \hat u \bigr).
	\end{equation*}
	Now using the fact (see \eqref{eq:svd}) that $D^*_0 \hat u = \hat v \hat \sigma$ we get \eqref{der:sigma}.
\end{proof}

\subsection{An optimal although expensive approach}
{Differently from what proposed in \cite{GLN20}, where both pseudospectral and a weighted version of pseudospectral level sets were considered, here we will only consider the weighted ones. This choice is motivated by the fact that we do not relay anymore on Eigtool for the computation of pseudospectral level sets. Instead, we will directly compute the value of a weighted version of the pseudospectrum, which is the one that really matters in the error estimate, see \cite[Theorem 2]{GLN20}. More precisely, we define the weighted $\varepsilon$-pseudospectrum as the set
\begin{equation}
	\sigma_{\varepsilon,t}(A)=\left\{z\in\mathbb{C} : \e^{\real(z)t}\Big\|(zI-A)^{-1}\Big\|\ge\frac{1}{\varepsilon}\right\},
\end{equation}
which is equivalent to
\begin{equation}
\sigma_{\varepsilon,t}(A)=\left\{z\in\mathbb{C} : \e^{-\real(z)t}\sigma_{\min}\Big(zI-A\Big)\le\varepsilon\right\}.
\end{equation}
{with $\sigma_{\min}\Big(zI-A\Big)$ denoting the smallest singular value of $zI-A$}. In particular, we are interested on the boundary of the weighted $\varepsilon$-pseudospectral level set, this is}
\begin{equation}\label{pslevelset}
\partial \sigma_{\varepsilon,t}(A)=\left\{z\in\mathbb{C} : \e^{\real(z)t}\Big\|(zI-A)^{-1}\Big\|=\frac{1}{\varepsilon}\right\}.
\end{equation}
Note that for $t=0$ we recover the standard definition of pseudospectrum and $\varepsilon$-pseudospectrum.

For a fixed target $\eps$, we impose the set in \eqref{pslevelset} to lay internal to the parabola. A natural approach to achieve this goal is defining the functional
\[{
\mathcal{F}(d,r) = \frac12 \sum\limits_{k=1}^{{M}} \Bigl(\eps - \tilde{\sigma}_k(d,r) \Bigr)_{+}^2
}\]
where
\begin{equation}\label{eq:sigk}
	{\tilde{\sigma}_k(d,r)=\e^{-\real(z_k)t}\sigma_{\min}\Big(A - z_k \Id\Big)},
\end{equation}
{with $z_k$ depending on $d$ and $r$, and $(\tau)_+ = \max\{ \tau,0 \}$}. In this way, values of $\tilde{\sigma}_k(d,r)$ which are larger than $\eps$ do not contribute to the functional.
The goal is to compute a solution $(d,r)$ to
\begin{eqnarray*}
	\min\limits_{d,r} && \mathcal{F}(d,r)
	\\
	{\rm s. \ t.} && \min_{k=1, \ldots, M} \tilde{\sigma}_k(d,r) = \eps
\end{eqnarray*}
where - for numerical convenience - the constraint may be
treated as a penalization term.
This can be done by computing the gradient of $\mathcal{F}$, where we use Lemma \ref{lemma:dsv}.
The gradient is continuous and has the form
\begin{equation}{
G(d,r) = \sum_{k=1}^{{M}}  (\eps-\tilde{\sigma}_k(d,r))_{+}\, \real(\iu u_k^* v_k)
\left( \begin{array}{cc} \displaystyle{\frac{x_k  r}{2 (z^R-d)^{3/2}}} \\[3mm] \displaystyle{\frac{x_k}{\sqrt{z^R-d}}} \end{array} \right),}
\label{eq:gradp}
\end{equation}
with {$x_k$ such that $z(x_k)=z_k$. Note that $\real(z_k)$ does not depend on $d$ and $r$, see \eqref{eq:innerp}}.
We add to the functional the penalization term
\begin{equation*}
P(d,r) = \frac12 \left( \min_{k=1,\ldots,M} \tilde{\sigma}_k(d,r) - \eps \right)^2,
\end{equation*}
whose gradient is obtained in a straightforward way. Then to compute a solution we can apply any gradient based method for unconstrained optimization to the functional
\[
\mathcal{F}(d,r)  + c P(d,r),
\]
for a sufficiently large $c$ (in the context of a penalization methodology). The method turns out to be effective but appears to be computationally expensive due to the fact that at every step of the gradient descent method we have to compute several singular values and the associated singular vectors.
%
\subsection{Selecting points internal to the weighted $\eps$-pseudospectrum}
{A cheaper (and preferred) alternative to the previous method, is obtained by treating the points $\{ z_k \}$ singularly, one after the other. In this way, we start by considering the first point $z_1 = \phi_1 + \iu \psi_1$ and compute}
\begin{equation*}
\left( A - z_1 \Id \right) = U_1 \Sigma_1 V_1,
\end{equation*}
setting
\begin{equation*}
\tilde{\sigma}_1 = \e^{-\real{(z_1)}t}\min {\rm diag} \left(\Sigma_1 \right),
\end{equation*}
according to the definition in \eqref{eq:sigk}. Then we check the difference $\delta \eps = \eps - \tilde{\sigma}_1$. If $\delta\epsilon\le 0$ we proceed by considering $z_2$ and repeating the same steps, otherwise it means that we are inside the  weighted $\eps$-pseudospectrum and therefore we need to update the internal parabola. If we do not find any $z_k$ such that $\tilde{\sigma}_k < \eps$, we may consider the point $z \in \{ z_k \}_{k=1}^{M}$ for which the corresponding $\tilde{\sigma}$ is minimal and proceed in order to find a closer parabola to the weighted $\eps$-pseudospectral level set.

The algorithm we adopt tunes the interpolation point $w=d+\iu r$, so that the updated curve is external weighted $\eps$-pseudospectrum of $A$ at $z$. We indicate by $p = p(d,r)$ a selected point {$z_k=z(x_k)=\phi_k+\opi \psi_k$ laying in the wrong {{weighted}} pseudospectral level set} and write $\tilde{\sigma}_k$ as
\begin{equation}
	\tilde{\sigma}(d,r)=\e^{-\real(p(d,r))t}\sigma_{\min}\Big(A-p(d,r)\Id\Big).
\end{equation}
In principle we want to solve the equation
\begin{equation}
\tilde{\sigma}(d,r) - \epsilon = 0 \qquad \mbox{w.r.t.} \ r.
\label{eq:sigmard}
\end{equation}
It seems natural to fix the parameter $d$ as the mean of the abscissas of the support points $z_k$, i.e.
\begin{equation*}
d = \frac{1}{M} \sum\limits_{k=1}^{M} \phi_k
\end{equation*}
and solve the scalar equation
\begin{equation}
\tilde{\sigma}(d, r) - \epsilon = 0 \qquad \mbox{w.r.t.} \ r.
\label{eq:sigmar}
\end{equation}
Applying Lemma \ref{lemma:dsv} to $\tilde{\sigma}(d,r)$- with $u$ and $v$ left and right associated singular vectors - we get
\begin{equation*}
\frac{d}{d r} \tilde{\sigma} \left(d,r\right) = -\e^{-\real(p(d,r))t}\real(\iu u^* v)\,g
\end{equation*}
with
\[
g = \frac{x_k }{\sqrt{z^R-d}}.
\]
In order to accurately compute $r$ such that $\tilde{\sigma}(d,r)= \eps$ we make a few (say $m$) Newton iterations
\begin{equation}\label{eqNI}
r^{\ell + 1} = r^{\ell} + \frac{\e^{-\real(p(d,r^\ell))t} \sigma_{\min}\left( A-p(d,r^\ell)\Id \right)-\eps}{\e^{-\real(p(d,r^\ell))t}\real\Bigl(\iu (u^{\ell})^* v^{\ell}\Bigr)\,g}, \quad \ell=1,\ldots,m-1
\end{equation}
with $u^{\ell}$ and $v^{\ell}$ singular vectors associated to $\sigma_{\min}(A- p(d, r^{\ell}) \Id)$ and $r^{\ell}$ the actual ordinate of the interpolation point $w$.

Then we compute a new parabola, which interpolates $ d + \iu r^m$, reparametrize it and compute a new set of points. Iterating a few times this procedure we compute the desired parabolic profile.
\begin{remark}
Since we have $2$ free real parameters to determine, $d$ and $r$, we may consider at the same time two points $z_1$ and $z_2$ to which correspond values of $\tilde{\sigma}(d,r)$ smaller than the
target value $\eps$.
This would provide a simple variant to the method described above.
We would first determine two points $z_1$ and $z_2$ such that $\tilde{\sigma}_j < \eps$, for $j=1,2$ and then solve equations  (in analogy to \eqref{eq:sigmar})
\begin{eqnarray*}
&& \tilde{\sigma}_1(d,r) - \epsilon = 0 \\[2mm]
&& \tilde{\sigma}_2(d,r) - \epsilon = 0,
\end{eqnarray*}
with respect to $r$ and $d$
by Newton method. It would be natural to expect that this method would result into a fewer number of iterations.
\end{remark}
\section{A case study: the 1D Black and Scholes equation}
\label{sec:BS}
The well known (deterministic) Black-Scholes equation \cite{BS} has the following form:
\begin{equation}\label{eq:BS}
	\frac{\partial u}{\partial \tau}=\frac{1}{2}\sigma^2s^2\frac{\partial^2u}{\partial s^2}+rs\frac{\partial u}{\partial s}-ru,\;\;s>L,\;\;0<\tau\le t,
\end{equation}
for $L$, $t$ given, where the unknown function $u(s,\tau)$ stands for the fair price of the option when the corresponding asset price at time $t-\tau$ is $s$ and $t$ is the maturity time of the option. Moreover, $r\ge0$, $\sigma>0$ are given constants (representing the interest rate and the volatility, respectively). In practice, for the sake of numerical approximation, we consider a bounded spatial domain, setting
\begin{equation*}
	L<s<S
\end{equation*}
for a sufficiently large $S$. We take \eqref{eq:BS} together with the following conditions, typical for the European call option, cf.~\cite{ITHW}:
\begin{equation}\label{ibcBS}
	\begin{array}{l}
		u(s,0)=\max(0,s-K),\\
		u(L,\tau)=0,\;0\le\tau\le t,\\
		u(S,\tau)=S-\e^{-r\tau}K,\;\;0\le \tau \le t,
	\end{array}
\end{equation}
being $K$ the reference strike price.
In this Section we extend the theory developed in \cite{RT} for the 1D convection-diffusion operator
\[
\mathcal{L}u = u_{xx}+u_x.
\]
to equation \eqref{eq:BS}. In this way we are able to theoretically determine a region in the complex plane where the norm of the resolvent of the Black-Scholes differential operator grows exponentially. This knowledge allows us to use \eqref{eq:BS} as a benchmark problem to test the new pseudospectral roaming strategy in Section~\ref{sec:pseudospec}.
Our goal is to solve \eqref{eq:BS} with \eqref{ibcBS} by applying the Laplace transform method. To do this we first transform the problem to an equivalent one with homogeneous boundary conditions. This is easily achieved by considering
\[
v(s,\tau)=u(s,\tau) - y(s,\tau),
\]
with
\[
y(s,\tau)= \frac{s-L}{S-L}\left( S - \e^{-r\tau}K \right).
\]
The differential equation for $v$ reads
\begin{equation}\label{eq:v}
	\frac{\partial v}{\partial \tau}=\frac{1}{2}\sigma^2s^2\frac{\partial^2v}{\partial s^2}+rs\frac{\partial v}{\partial s}-rv - {\frac{s}{S-L}}r\e^{-r\tau} K+{\frac{Lr}{S-L}S},\qquad s>L,\;\;0<\tau\le t,
\end{equation}
with initial and boundary data
\begin{equation}\label{ibcv}
	\begin{array}{l}
		v(s,0) = \max(0,s-K)-\frac{s-L}{S-L}\left( S - K \right) =: v_0(s),\\
		v(L,\tau)=0,\;0\le\tau\le t,\\
		v(S,\tau)=0,\;\;0\le \tau \le t,
	\end{array}
\end{equation}
We can now apply the Laplace transform to both sides of \eqref{eq:v} with respect to $\tau$. This leads to the following equation for $V(s,z)$, the Laplace transform of $v(s,\tau)$:
\[
V(s,z) = (zI-\mathcal{L})^{-1} \left( v_0(s) - \frac{srK}{S-L}\frac{1}{z+r}+\frac{Lr}{S-L}\frac{1}{z} \right),
\]
with $\mathcal{L}$ the differential operator for the Black-Scholes problem with homogeneous boundary conditions.

\subsection{Pseudospectra of the Black-Scholes equation}
For our analysis we set $L=1$, which is reasonable if $S, K>>1$ in \eqref{ibcBS} and allows to apply the change of coordinates $x=\log(s)$ while keeping the domain bounded. After this change of variable we obtain the evolution problem $u_t=\mathcal{L}u$, with
\begin{equation}\label{defL}
	\mathcal{L}u=\frac{1}{2}\sigma^2 u_{xx}+ \left(r-\frac{1}{2}\sigma^2 \right)u_x-ru,\qquad 0\le x\le\log(S),
\end{equation}
a second order diffusion-convection-reaction differential operator with constant coefficients on a bounded domain with homogeneous boundary conditions of Dirichlet type. We thus can compute explicitly the eigenvalues and eigenfunctions of $\mathcal{L}$ by applying it to a mapping of the form $\varphi(x)=\e^{\alpha x}$. In this way, we obtain
\begin{equation}
	\mathcal{L}\varphi =(\nu\alpha^2+(r-\nu)\alpha-r)\e^{\alpha x}=\lambda \varphi,\qquad \text{with}\;\;\nu=\frac{1}{2}\sigma^2
	\label{eq5}
\end{equation}
and
\begin{equation}\label{eigL}
	\lambda = \nu\alpha^2+(r-\nu)\alpha-r.
\end{equation}
Then, for each $\lambda$ real we have two associated values of $\alpha$, namely
\begin{equation}
	\alpha_{\pm}=\frac{-(r-\nu)\pm\sqrt{(r+\nu)^2+4\lambda\nu}}{2\nu}.
	\label{eq3}
\end{equation}

For any $\lambda$ and corresponding $\alpha_+$ and $\alpha_-$, the function
\begin{equation}
	\phi(x)=\frac{\e^{\alpha_+x}-\e^{\alpha_-x}}{\alpha_+-\alpha_-}
	\label{eq6}
\end{equation}
satisfies (\ref{eq5}) in the interior of $[0,\;\log(S)]$ and the boundary condition at $x=0$. It also satisfies the boundary condition at $x=\log (S)$ provided $\e^{\alpha_+ \log (S)}=\e^{\alpha_- \log (S)}$, that is, $(\alpha_+-\alpha_-)\log(S)=2\pi n \opi$ for some nonzero $n\in\mathbb{Z}$. By (\ref{eq3}), this amounts to the condition $(\log(S)/\nu)\sqrt{(r+\nu)^2+4\lambda\nu}=2\pi n \opi $, and upon squaring we obtain the following eigenvalues
\begin{equation}
	\lambda_n=-\left(\frac{r+\nu}{2}\right)^2\frac{1}{\nu}-\frac{\pi^2n^2\nu}{\log(S)^2},\;\;\;\;n=1,2,3,...
	\label{eq4}
\end{equation}
Thus $\Lambda(\mathcal{L})$ is a discrete set of negative real numbers in the interval $(-\infty,-\frac{1}{4})$.
Note that, for our problem, there are choices of $\lambda$ for which both $\alpha_+$ and $\alpha_-$ lie in the left half-plane and thus both $\e^{\alpha_+x}$ and $\e^{\alpha_-x}$ are decreasing functions. For the eigenfunctions associated to (\ref{eq4}), this occurs with $\real(\alpha_+)=\real(\alpha_-)=-\frac{(r-\nu)}{2\nu}$ under the assumption $r>\nu$. More generally, it occurs if and only if $\alpha$ belongs to the strip $B=\{\alpha\in\mathbb{C}:\;-\frac{(r-\nu)}{\nu}\le\real(\alpha)\le0\}$, since if $\alpha$ is one solution of (\ref{eq5}), the other is $-\frac{(r-\nu)}{\nu}-\alpha$. The corresponding region in the $\lambda$-plane is the image of $B$ under the function $\lambda=\nu\alpha^2+(r-\nu)\alpha-r$, which we denote by $\Pi$:
\begin{equation}
	\Pi=\{\lambda\in\mathbb{C}:\;\lambda=\nu\alpha^2+(r-\nu)\alpha-r,\;-\frac{(r-\nu)}{\nu}\le\text{Re}(\alpha)\le0\}.
\end{equation}
The  ``critical parabola'' that bounds $\Pi$ is the image of the boundary of $B$ under the same function, which we can simply represent by
\begin{equation}
	P=\{\lambda\in\mathbb{C}:\;\lambda=\nu\alpha^2+(r-\nu)\alpha-r,\;\real(\alpha)=0\}
\end{equation}
since $\text{Re}(\alpha)=-\frac{(r-\nu)}{\nu}$ maps onto the same parabola as $\text{Re}(\alpha)=0$.
Suppose now that $\lambda$ is any complex number in the interior of $\Pi$ so that $\real (\alpha_+)<0$ and $\real(\alpha_-)<0$. Then $\phi(x)$ decreases exponentially with $x$, so if $\log(S)$ is reasonably large, the boundary condition $u(\log(S))=0$ is \textit{nearly} satisfied, with an error of order $\e^{\mu \log(S)}=S^{\mu}$, where $\mu=\max\{\real(\alpha_+),\real(\alpha_-)\}$. Thus $\phi(x)$ is \textit{nearly an eigenfunction} of $\mathcal{L}$, though $\lambda$ may be far from any of the exact eigenvalues.
Then we can just repeat the arguments and passages of \cite{RT} to get their results. The main difference is that, in our case,  we also consider a reaction term, anyway results of \cite{RT} for the convection-diffusion operator can be extended with their same procedure to the convection-diffusion-reaction operator.
We now state our version of Theorem 5 of \cite{RT} which deal with the Black-Scholes differential operator.
\begin{theorem}
	Let $\lambda$ be an arbitrary point in the interior of $\Pi$, with $\alpha_{\pm}$ and $\phi(x)$ defined by (\ref{eq3}) and (\ref{eq6}), and write $\alpha_+-\alpha_-=(1/\nu)\sqrt{(r+\nu)^2+4\lambda\nu}=\beta+\opi\tau$. Then
	\begin{equation}
		\|(\lambda I-\mathcal{L})^{-1}\|\sim\frac{\|\phi\|^2}{\phi(\log (S))}
	\end{equation}
	If in addition $\lambda\not\in(-\infty,\;-\left(\frac{r+\nu}{2}\right)^2\frac{1}{\nu}]$, then $\phi(\log (S))\sim S^{\mu}/|\alpha_+-\alpha_-|$ and therefore
	\begin{equation}
		\|(\lambda I-\mathcal{L})^{-1}\|\sim{S^{-\mu}(\beta^2+\tau^2)^{1/2}\|\phi\|^2},
		\label{eq7}
	\end{equation}
	where $\mu=\max\{\real(\alpha_+),\real(\alpha_-)\}<0$.
\end{theorem}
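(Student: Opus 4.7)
The theorem is a direct extension of Theorem~5 of \cite{RT} to the Black--Scholes operator $\mathcal{L}$ in \eqref{defL}, which differs from the pure convection--diffusion operator treated there only by the bounded, constant-coefficient reaction term $-ru$. Since such a term leaves the structure of the (pseudo-)eigenfunctions untouched and merely translates the spectral picture, the plan is to mimic the Reddy--Trefethen strategy: prove a lower and a matching upper bound on $\|(\lambda I-\mathcal{L})^{-1}\|$ in terms of $\|\phi\|^2/\phi(\log(S))$, and only at the end insert the asymptotics of $\phi(\log(S))$ to obtain \eqref{eq7}.

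For the lower bound I would exploit $\phi$ as a pseudo-eigenfunction: by construction it solves $(\lambda I-\mathcal{L})\phi=0$ pointwise and satisfies the left boundary condition exactly, leaving only the boundary residual $\phi(\log(S))$. Setting $\psi(x)=\phi(x)-\phi(\log(S))\chi(x)$, with $\chi$ a smooth cutoff localised in a thin layer near $x=\log(S)$ and satisfying $\chi(0)=0$, $\chi(\log(S))=1$, places $\psi$ in the domain of $\mathcal{L}$ and gives $(\lambda I-\mathcal{L})\psi=-\phi(\log(S))(\lambda I-\mathcal{L})\chi$. Since $\lambda\in\mathrm{int}(\Pi)$ forces $\real(\alpha_\pm)<0$, $\phi$ concentrates its $L^2$-mass in the bulk of $[0,\log(S)]$, so $\|\psi\|\sim\|\phi\|$; choosing $\chi$ with width of order $1/|\alpha_\pm|$ makes $\|(\lambda I-\mathcal{L})\chi\|\sim 1/\|\phi\|$, whence
\[
\|(\lambda I-\mathcal{L})^{-1}\|\ge \frac{\|\psi\|}{\|(\lambda I-\mathcal{L})\psi\|}\sim \frac{\|\phi\|^2}{|\phi(\log(S))|}.
\]
For the matching upper bound I would write the resolvent as the integral operator with Green's kernel $G(x,y;\lambda)=\phi(x_<)\phi_R(x_>)/(\nu W(y))$, where $\phi_R$ is the homogeneous solution selected by the right Dirichlet condition and $W$ is the (explicit) Wronskian. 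Decomposing $\phi_R$ as a scalar multiple of $\phi$ plus a piece carrying the boundary data exhibits $G$ as a rank-one dominant part proportional to $\phi(x)\phi(y)/\phi(\log(S))$ plus a uniformly bounded remainder of strictly lower order; the Hilbert--Schmidt (hence operator) norm of the rank-one part is exactly $\|\phi\|^2/|\phi(\log(S))|$.

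For the second statement, the assumption $\lambda\notin(-\infty,-(r+\nu)^2/(4\nu)]$ prevents $(r+\nu)^2+4\lambda\nu$ from being real and non-positive, so $\alpha_+-\alpha_-=\beta+\opi\tau$ has $\beta\neq 0$, i.e.\ $\real(\alpha_+)\neq\real(\alpha_-)$. Expanding
\[
\phi(\log(S))=\frac{\e^{\alpha_+\log(S)}-\e^{\alpha_-\log(S)}}{\alpha_+-\alpha_-},
\]
the exponential with the larger real part $\mu$ dominates and yields $|\phi(\log(S))|\sim S^{\mu}/|\alpha_+-\alpha_-|=S^{\mu}/(\beta^2+\tau^2)^{1/2}$; plugging this into the first estimate gives \eqref{eq7}. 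The main obstacle is making the lower bound asymptotically sharp rather than merely correct up to an absolute constant: this requires tuning the cutoff $\chi$ so that $\|(\lambda I-\mathcal{L})\chi\|$ does not inflate, or equivalently verifying in the upper bound that the rank-one approximation of $G$ captures the true dominant singular value. Both sides hinge on the same quantitative fact, namely that in the bulk of $[0,\log(S)]$ the Green's kernel is asymptotically of rank one with singular value $\|\phi\|^2/|\phi(\log(S))|$, and this is precisely what is established in the pure convection--diffusion case in \cite{RT}.
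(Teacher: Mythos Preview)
Your proposal is correct and follows exactly the route the paper takes: the paper gives no independent proof but simply states that one ``can just repeat the arguments and passages of \cite{RT}'' for the convection--diffusion--reaction operator, and your sketch is precisely an outline of the Reddy--Trefethen argument (pseudo-eigenfunction lower bound, Green's-kernel rank-one upper bound, then the asymptotics of $\phi(\log S)$). If anything, you supply more detail than the paper does; your caveat about sharpness of the cutoff lower bound is well placed and is resolved in \cite{RT} exactly as you indicate, by reading off the dominant singular value from the rank-one part of the Green's kernel.
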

This result tells us that the resolvent norm changes exponentially along any vertical line inside $\Pi$. Indeed we know, by construction, that the critical parabola is the curve such that $\real(\alpha)=0$ and therefore $S^{-\mu}=1$, while on the real axis, for $\real(\lambda)$ sufficiently small the real part of $\alpha_-$ and $\alpha_+$ are the same and equal to $-\frac{r-\nu}{2\nu}$, that corresponds to the case where $S^{-\mu}$ is maximized.
\subsection{Symmetrizability and a further estimate}
As done in \cite{RT} we can explicitly symmetrize the BS operator $\mathcal{L}$.
First let's define $\rho=\frac{r-\nu}{2\nu}$ and $u(x)=\e^{-\rho x}v(x)$, which implies
\begin{align*}
	&u'=\e^{-\rho x}\left(-\rho v+v'\right),\\
	&u''=\e^{-\rho x}\left(\rho^2v-2\rho v'+v''\right),
\end{align*}
and therefore
\begin{align*}
	\mathcal{L}u=&\nu u''+(r-\nu)u'-ru=\\
	=&\e^{-\rho x}\left[(\rho^2v-2\rho v'+v'')\nu+(-\rho v+v')(r-\nu)-rv\right]=\\
	=&\e^{-\rho x}\left[\rho^2\nu v-(r-\nu)v'+\nu v''-\rho(r-\nu)v+(r-\nu)v'-rv\right]=\\
	=&\e^{-\rho x}\left[v''\nu+v(\rho^2\nu-\rho(r-\nu)-r)\right]=\\
	=&\e^{-\rho x}\left[\nu v''-\left(\frac{(r-\nu)^2}{4\nu}-r\right)v\right].
\end{align*}
Thus if we define $\mathcal{K}v=\nu v''-\left(\frac{(r-\nu)^2}{4\nu}-r\right)v$, $\mathcal{M}v=\e^{-\rho x}v(x)$, then we have
\begin{equation}
	\mathcal{L}=\mathcal{MKM}^{-1}.
\end{equation}
Here $\mathcal{K}$ is a self-adjoint operator and $\mathcal{M}$ is a diagonal operator with $\|\mathcal{M}\|=1$,
$\|\mathcal{M}^{-1}\|=\e^{\rho\log(S)}$, and consequently
\begin{equation}
	\kappa(\mathcal{M})=\|\mathcal{M}\|\|\mathcal{M}^{-1}\|=S^{\frac{r-\nu}{2\nu}}.
\end{equation}
From here, we follow the analysis in \cite{RT} and derive the following bound for the resolvent norm of Black-Scholes operator.
\begin{theorem}
	For any $d>0$, $r>\nu$ and $\lambda\in\mathbb{C}$,
	\begin{equation}
		\|(\lambda I-\mathcal{L})^{-1}\|\le\frac{S^{\frac{r-\nu}{2\nu}}}{\mathrm{dist}(\lambda,P)}\le\frac{S^{\frac{r-\nu}{2\nu}}}{|\imag( \lambda)|}.
	\end{equation}
	\label{teo4.2}
\end{theorem}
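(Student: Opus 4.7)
The plan is to push the resolvent through the symmetrization $\mathcal{L} = \mathcal{M}\mathcal{K}\mathcal{M}^{-1}$ derived immediately before the statement. From the similarity,
\[
(\lambda I - \mathcal{L})^{-1} = \mathcal{M}\,(\lambda I - \mathcal{K})^{-1}\,\mathcal{M}^{-1},
\]
so submultiplicativity of the spectral norm gives
\[
\|(\lambda I - \mathcal{L})^{-1}\| \,\le\, \|\mathcal{M}\|\,\|\mathcal{M}^{-1}\|\,\|(\lambda I - \mathcal{K})^{-1}\| \,=\, \kappa(\mathcal{M})\,\|(\lambda I - \mathcal{K})^{-1}\| \,=\, S^{(r-\nu)/(2\nu)}\,\|(\lambda I - \mathcal{K})^{-1}\|,
\]
using the condition number $\kappa(\mathcal{M}) = S^{(r-\nu)/(2\nu)}$ already computed.

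Next, since $\mathcal{K}$ is self-adjoint on $L^2(0,\log S)$ with homogeneous Dirichlet conditions, the spectral theorem gives the exact identity
\[
\|(\lambda I - \mathcal{K})^{-1}\| \,=\, \frac{1}{\mathrm{dist}(\lambda,\sigma(\mathcal{K}))}.
\]
By the similarity $\sigma(\mathcal{K}) = \sigma(\mathcal{L})$, which is the discrete real set $\{\lambda_n\}$ in \eqref{eq4}, lying entirely on $\mathbb{R}$ and to the left of the vertex $-r$ of the critical parabola $P$, that is, on the symmetry axis of the closed parabolic region $\overline{\Pi}$.

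The second inequality is then immediate: since $\sigma(\mathcal{K})\subset\mathbb{R}$, we have $\mathrm{dist}(\lambda,\sigma(\mathcal{K}))\ge |\imag \lambda|$ for every $\lambda\in\mathbb{C}$. For the first inequality I would argue geometrically: since $\sigma(\mathcal{K})\subset\overline{\Pi}$ and $P=\partial\Pi$ opens leftward with vertex $-r$, any segment joining $\lambda$ outside $\Pi$ with a point of $\sigma(\mathcal{K})$ must cross $P$, which forces $\mathrm{dist}(\lambda,\sigma(\mathcal{K}))\ge \mathrm{dist}(\lambda,P)$. Chaining the two estimates with the displayed inequality of the first paragraph yields the stated result.

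The main obstacle I foresee is precisely this last geometric step relating $\mathrm{dist}(\lambda,\sigma(\mathcal{K}))$ and $\mathrm{dist}(\lambda,P)$: the bound is truly informative only for $\lambda$ outside (or sufficiently far from) $\Pi$, since inside $\Pi$ the true resolvent norm blows up near the discrete eigenvalues while $\mathrm{dist}(\lambda,P)$ may remain bounded below. This mirrors the analogous step in the proof of the corresponding theorem of \cite{RT}, and the presence of the additional reaction term $-ru$ here only shifts the constants defining $P$ and $\mathcal{K}$ without altering the structural argument.
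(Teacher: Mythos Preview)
Your approach via the symmetrization $\mathcal{L}=\mathcal{M}\mathcal{K}\mathcal{M}^{-1}$ and the self-adjoint resolvent identity is exactly the route the paper intends: the paper gives no explicit proof but simply defers to the analogous argument in Reddy--Trefethen~\cite{RT}, and what you wrote is that argument transplanted to the Black--Scholes setting.

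Your worry about the geometric step is, however, well founded --- and in fact it points to a slip in the \emph{statement} rather than a gap in your reasoning. As written, the middle term involves $\mathrm{dist}(\lambda,P)$ with $P$ the critical parabola. With that reading, the first inequality fails for $\lambda$ inside $\Pi$ near an eigenvalue (the left-hand side blows up while $\mathrm{dist}(\lambda,P)$ stays bounded away from zero), and the second inequality $\mathrm{dist}(\lambda,P)\ge|\imag\lambda|$ is also false for $\lambda$ near $P$ with large imaginary part. In the original~\cite[Theorem~6]{RT} the middle term is $\mathrm{dist}(\lambda,\Sigma)$ with $\Sigma=\Lambda(\mathcal{L})$ the (real, discrete) spectrum. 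With that correction your argument closes immediately: from self-adjointness of $\mathcal{K}$ you get $\|(\lambda I-\mathcal{K})^{-1}\|=1/\mathrm{dist}(\lambda,\sigma(\mathcal{K}))=1/\mathrm{dist}(\lambda,\Lambda(\mathcal{L}))$, and since $\Lambda(\mathcal{L})\subset\mathbb{R}$ the bound $\mathrm{dist}(\lambda,\Lambda(\mathcal{L}))\ge|\imag\lambda|$ is trivial. No separate geometric comparison with the parabola is needed.
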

The discussion done in the previous subsection this theorem suggested that the pseudospectra level curves of $\mathcal{L}$ are bounded approximately by parabolas. On the light of Theorem \ref{teo4.2} we can say that the exponential bound by parabola does not hold as $|\lambda|\rightarrow0$ but, for any fixed $\epsilon$ and $S$, $\Lambda_{\epsilon}(\mathcal{L})$ the $\epsilon$-pseudospectra is contained in a strip of finite (though typically large) width:
\begin{equation}
	\Lambda_{\epsilon}(\mathcal{L})\subset\{\lambda\in\mathbb{C}:\;|\imag(\lambda)|\le\eps\; S^{\frac{r-\nu}{2\nu}}\}.
\end{equation}
\subsection{Numerical validation}
We have to keep in mind that the results previously exposed hold for the continuous operator.
Since our aim is to have a numerical validation we have to deal with the discrete version
$\mathcal{L}_h$. The way properties of the pseudospectrum of the discrete operator converge
to the properties of the continuous one is not treated here and, to our knowledge, is an open
research problem.
Nevertheless it is reasonable to expect for small $h$ a behaviour close to the one exhibited
by the differential operator. Indeed this is what we observe in Figure \ref{fig4.1} where we
set $S=200$ and used $2000$ points for the space discretization; the resolvent norm is plotted
in a subset of the $\Pi$ region.
The comparison between the magnitude of the resolvent norm estimate (\ref{eq7}) and the resolvent
norm of the discrete operator indicates a very similar behaviour of the two operators.
We clearly see that both decrease when approaching the critical parabola and are maximal close to
the real axis. 
\begin{figure}
	\centering{
		\subfigure{
			\includegraphics[width=0.46\textwidth]{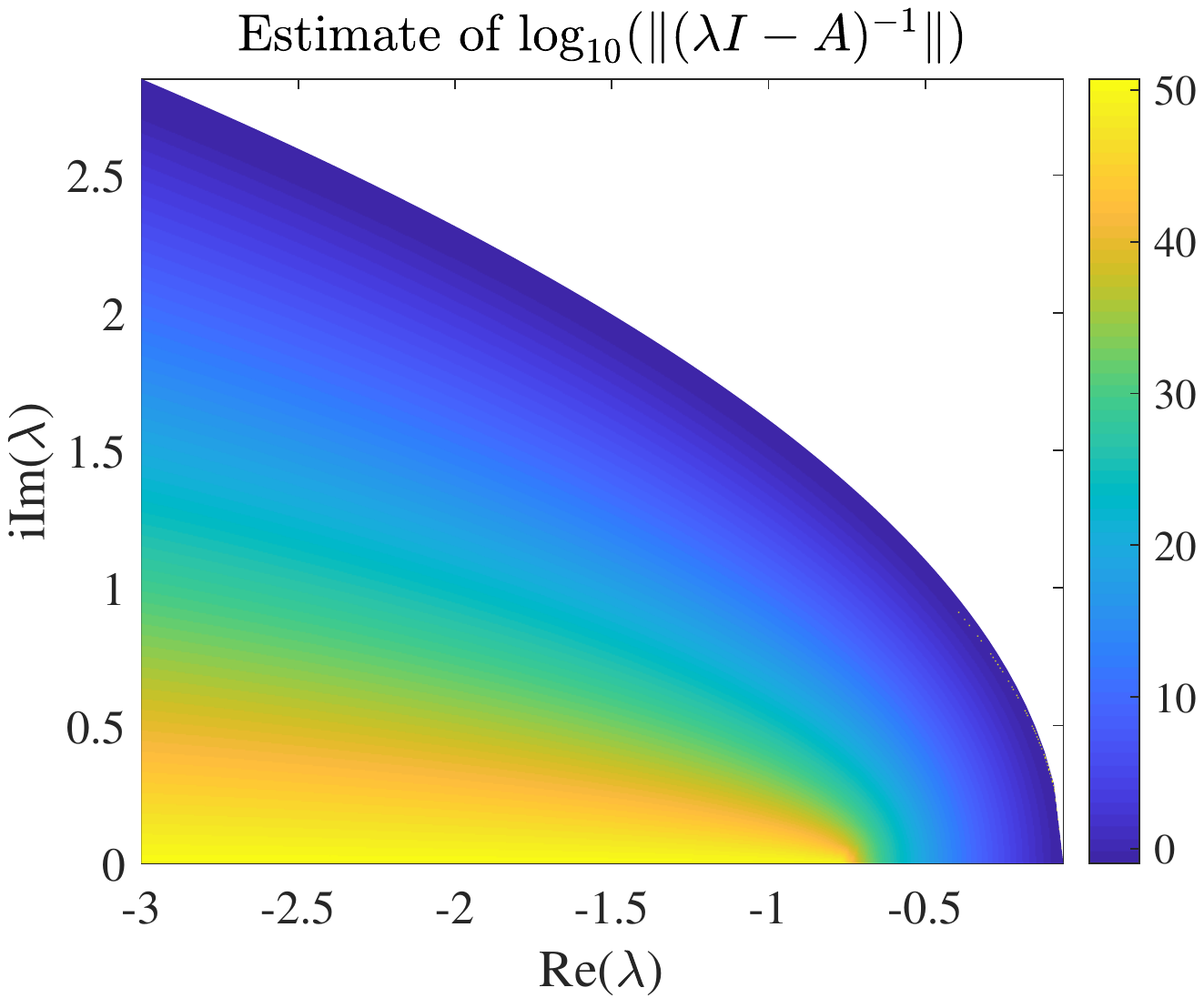}}
		\subfigure{
			\includegraphics[width=0.46\textwidth]{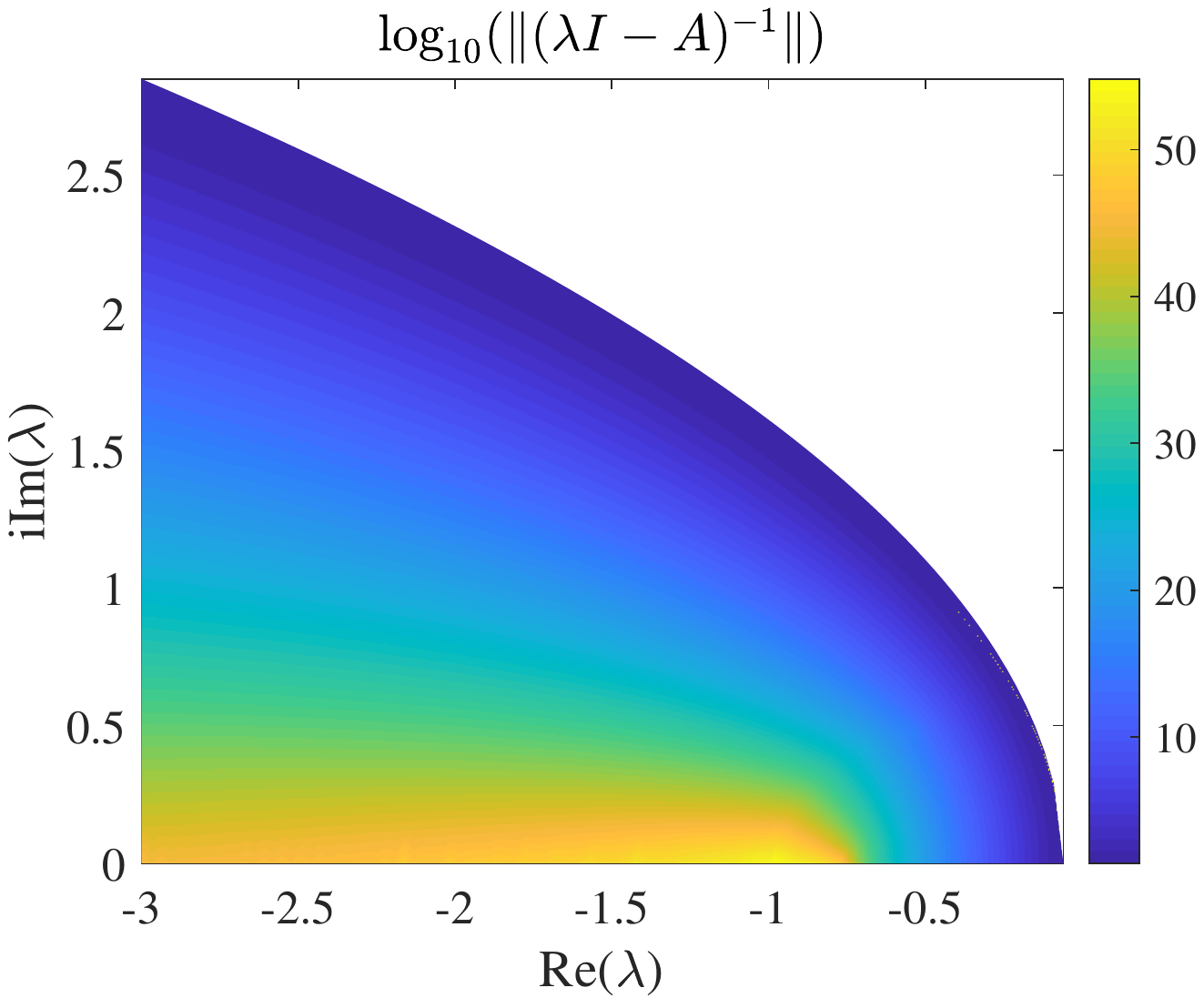}}
	}	
	\caption{Magnitude of the estimate of the resolvent norm (\ref{eq7}) (left) and magnitude of the computed resolvent norm (right).}
	\label{fig4.1}
\end{figure}

\section{Choice of the parameters}\label{sec:param}
In \cite{GLN20} error estimates are developed to provide a practical strategy to
optimize the elliptical integration contour and minimize the required number $N$
of quadrature nodes, for a prescribed target accuracy.
However, the main results in \cite{GLN20}, namely Theorems 1 and 2, do not depend
on the specific choice of an ellipse
and apply in a straight forward way to the parabolic and hyperbolic contours we
have described in Section~\ref{sec:contour}.
The steps to determine the integration contour are thus common for the three types of
contours under study and are the following:
\begin{enumerate}
\item Compute $z^L$ from $\e^{z^L t} = \varepsilon$, with $\varepsilon$ the working precision.
\item Compute the critical curve $\Gamma_{left}$ according to Section~\ref{sec:pseudospec}.
This procedure provides an explicit parametrization of $\Gamma_{left}$, of the form $\psi(x)$, $x\in \bR$.
\item Compute  $c_{\max}$ as the unique value that satisfies ${\real(\Gamma(c_{\max} \pi) )= z^L}$.
This gives, according to the discussion in Section~\ref{sec:contour}:
    \begin{equation}\label{cmax}
    c_{\max}(a) = \left\{
    \begin{array}{l}
    \displaystyle \frac{1}{2}, \qquad \mbox{ for the ellipse},\\[1em]
    \displaystyle {\frac{1}{\pi}\sqrt{z^R-z^L+a^2+\frac{ar}{\sqrt{z^R-d}}}},\qquad \mbox{ for the parabola}, \\[1em]
    \displaystyle  {\frac{1}{\pi} \log \left(b+\sqrt{b^2-1 } \right)},\;\text{with}\;b=\frac{(z^C-z^L)\sin(a_1+a)}{(z^R-z^L)\sin(a_1)},\;
		\mbox{ for the hyperbola}.
    \end{array}
    \right.
    \end{equation}
\item Compute $a$ by following the same steps as in \cite[Section 3.2]{GLN20}.
This requires the identification of the right-most point of the external ellipse/parabola/hyperbola, this is
    \begin{equation}\label{D}
    D(a) = \left\{
    \begin{array}{l}
    \displaystyle z(-\opi a) = z^L+ \cosh(2a)(z^R-z^L)+\sinh(2a) \frac{r}{\sin(\theta)}, \qquad \mbox{ for the ellipse},\\[1em]
    \displaystyle z(-\opi a) = a^2-2 a_1(a)a +a_2(a),\qquad \mbox{ for the parabola}, \\[1em]
    \displaystyle  z(\opi a) = z^C-a_2(a)\sin(a_1(a) -a),\qquad \mbox{ for the hyperbola},
    \end{array}
    \right.
    \end{equation}
where $r$ in the first expression above is the imaginary part of the control point $d+\opi r$ from Section~\ref{sec:contour} and
$\theta=\arccos\left(\frac{d-z^L}{z^R-z^L}\right)$, as in \eqref{a1ell} and \eqref{a2ell}.
In order to have a more robust estimate we take into account the constant $M_{left}$, named $M_{+}$ in \cite[equation (23)]{GLN20}, which is defined as
\begin{equation}
	M_{left}=\frac{1}{2\pi}\max_{z\in\tilde{\Gamma}_{left}}\Big\|\e^{zt}(zI-A)^{-1}\left(u_0+\hat{b}(z)\right)z'\Big\|,
\end{equation}
where $\tilde{\Gamma}_{left}$ is a suitable restriction of $\Gamma_{left}$.

We also consider a different estimate for $M_{right}$, named $M_{-}$ in \cite[equation (24)]{GLN20}, which is defined as
\begin{equation}
	M_{right}=\frac{1}{2\pi}\max_{z\in\tilde{\Gamma}_{right}}\Big\|\e^{zt}(zI-A)^{-1}\left(u_0+\hat{b}(z)\right)z'\Big\|,
\end{equation}	
where $\tilde{\Gamma}_{right}$ is a suitable restriction of $\Gamma_{right}$. Note that $M_{left}$ can be bounded as
\begin{equation}	M_{left}\le\frac{1}{2\pi}\max_{z\in\tilde{\Gamma}_{left}}\e^{\real(z)t}\Big\|(zI-A)^{-1}\Big\|\Big\|\left(u_0+\hat{b}(z)\right)\Big\||z'|=\tilde{M}_{left},
\end{equation}
and we can take advantage of the computation done in step $2$ for the weighted $\eps$-pseudospectrum to approximate $\tilde{M}_{left}$.
Concerning $M_{right}$ we first assume that the maximum is reached on $z_v(a)$, the vertex of $\tilde{\Gamma}_{right}$; in this way we have
\begin{equation}\label{eq:Mright}
M_{right}\le\frac{1}{2\pi}\e^{D(a)t}\Big\|(z_v(a)I-A)^{-1}\Big\|\left\|u_0+\hat{b}\left(z_v(a)\right)\right\||z'_v(a)|=\tilde{M}_{right}.
\end{equation}
Therefore repeating the calculations of \cite[Section $3.2$]{GLN20} and including $M_{left}$ and $M_{right}$ we have,
for a fixed target accuracy $\tol$,
\begin{align}\label{eq:minimiza}
N=&\frac{c}{a}\Big(\log\left(2\pi c\tilde{M}_{right}+\pi \tilde{M}_{left}\right)-\log\left(\tol\right)\Big)\nonumber\\
\leq&\frac{c_{\max}(a)}{a}\Big(\log\left(2\pi c_{\max}(a)\tilde{M}_{right}+\pi \tilde{M}_{left} \right)-\log\left(\tol\right)\Big).
\end{align}
\begin{algorithm}[t]
	\caption{\texttt{Numerical algorithm for computing $a$.}}\label{A2}
	\hspace*{\algorithmicindent} \textbf{Input:} $a_{max}$, $\Gamma_{left}$, $\tilde{M}_{left}$, $\tilde{M}_{right}=1$, $a^{1}=a_{max}$, $err=1$ and $j=0$\\
	\hspace*{\algorithmicindent} \textbf{Output:} $a$
	\begin{algorithmic}[1]
		\WHILE {$err\ge prec$}
		\STATE Find $a^{j+1}\in[0, a_{max}]$ such that $a^{j+1}=\arg \min f(a)$, see (\ref{eq:Nbis});
		\STATE $err=\frac{|a^{j+1}-a^{j}|}{a^{j+1}}$;
		\IF {$err\ge prec$;}
		\STATE Compute $\tilde{M}_{right}$ according (\ref{eq:Mright}) and then update $f(a)$;
		\ENDIF
		\STATE $j=j+1$;
		\ENDWHILE
	\end{algorithmic}
\end{algorithm}

The value of $a$ is then computed by minimizing the right hand side of \eqref{eq:minimiza}
which requires an interval of the form $a\in [a_{\min},a_{\max}]$, with prescribed
bounds $a_{\min}$ and $a_{\max}$.
Clearly $a_{\min}$ has to be set equal to $0$ while to determine $a_{\max}$ we need
a further discussion.
The procedure described above does not take into account the numerical error due to
the conditioning of $zI-A$, whose effect is also amplified by the multiplication with
the exponential term. This contribution becomes relevant when one wants to reach high
accuracies or has to deal with ill conditioned systems.
To keep under control this error we estimate it on the vertex of the integration
profile defined by $a=a_{\max}$. If this is higher than the required tolerance we
reduce $a_{\max}$ and we check again until the estimate is below the required
tolerance.
Reducing $a_{\max}$ has the effect of bringing  the vertex profile closer to the imaginary
axis, which reduces the amplification effect of the exponential.
We remand to Subsection \ref{subsec:TW} for the description on how we estimate the
numerical error due to the solution of linear systems.
Two aspects are remarkable.

\renewcommand{\labelenumii}{(\roman{enumii})}
\begin{enumerate}
\item It may be too restrictive to select the first acceptable
$a_{\max}$, since we may exclude some acceptable $a$. Therefore, we select the value computed
in the iteration before convergence and add a penalization term to (\ref{eq:minimiza}). This penalization term is based on the estimation of the numerical error due to the conditioning of $zI-A$ on the
vertex of the integration profile. The new function to minimize becomes
\begin{equation}\label{eq:Nbis}
	f(a)=\frac{c_{\max}(a)}{a}\Big(\log\left(2\pi c_{\max}(a)\tilde{M}_{right}+\pi \tilde{M}_{left} \right)-\log\left(\tol\right)\Big)+wp(a),
\end{equation}
where $w$ is a positive scalar and
\begin{equation}
	p(a)=\begin{cases}0 & err^{num}_N(a)  < \tol\\
		1 &err^{num}_N(a)  \ge \tol\end{cases}
\end{equation}

\item We note that evaluating (\ref{eq:Mright}) for every $a\in[a_{\min},a_{\max}]$ is computationally
expensive due to the presence of the resolvent norm. We thus implement the iteration procedure
described in Algorithm \ref{A2}, which we have observed to convergence in very few iterations
(from $2$ to $6$ depending on which type of contour we use) in all the numerical tests done.

\end{enumerate}

\item Compute a truncation parameter $c\le c_{\max}$. The actual numerical integration is performed for $x\in[-c\pi, c\pi]$. The computation  of $c$ requires an iterative procedure resumed in Algorithm \ref{A1} and follows precisely \cite[Section 3.5]{GLN20}, for the three types of integration contours.

\item Set 
\begin{equation}\label{eq:N}
N= \left \lceil \frac{c}{a}\Big(\log\left(2\pi c\tilde{M}_{right}+\pi\tilde{M}_{left}\right)-\log\left(\tol\right)\Big)\right\rceil.
\end{equation}

\end{enumerate}

\begin{algorithm}[t]
	\caption{\texttt{Numerical algorithm for approximating $c,K$.}}\label{A1}
	\hspace*{\algorithmicindent} \textbf{Input:} $K^{(1)}$ given, $K^{(0)}=K^{(1)}-2prec$, $j=0$\\
	\hspace*{\algorithmicindent} \textbf{Output:} $c$
	\begin{algorithmic}[1]
		\WHILE {$|K^{(j+1)}-K^{(j)}|\ge prec$}
		\STATE Find $c^{(j)}$ such that $\real(z(c^{(j)}\pi))=\frac{1}{t}\log\left(\frac{\tol}{K^{(j)}}\right)$;
		\STATE $K^{(j+1)}=\frac{1}{2\pi}\|\hat{u}(z(c^{(j)}\pi))z^{'}(c^{(j)}\pi))\|$;
		\STATE $j=j+1$;
		\ENDWHILE
	\end{algorithmic}
\end{algorithm}

\section{Comparison of the integration profiles: numerical illustrations} \label{sec:compar}
In this section we apply the considered contour integral methods to two illustrative problems
arising from finance. The first problem is the Black-Scholes equation, while the second is
the Heston equation. Both models are the same as in \cite{GLN20}.

We show the absolute error rather than the relative error in order to check the match with
the target accuracy $\tol$, that is the accuracy we want to reach in the approximation of (\ref{eq:bromwich}).
{Similarly to what has been done in other publications presented in the literature, we compute
the time approximation error for a semidiscretization in space of the PDE. We do not address here specific estimates of the spatial discretization error, but rely on the referred literature for every example.
We measure the error in time against a reference solution computed by using the MATLAB function {\tt expmv}, see \cite[Algorithm 3.2]{AH}.}

We also notice that in all our tests we construct the inner curves as explained in Section
\ref{sec:pseudospec}, taking the weighted $\eps$-pseudospectral level set with $\epsilon=10^{-7}$.

\subsection{Black-Scholes equation}
\label{subsec:BS}
Following the same strategy adopted in \cite{ITHW2,GLN20}, we discretize \eqref{eq:BS} in space
on a uniform space grid of $N_h=2000$ points in $[L,S]$ for $L=0$, $S=200$, by using the classical
centered finite difference scheme. {The error associated to the spatial discretization is then $\mathcal{O}(\Delta x^2)$, with $\Delta x$ the diameter of the spatial grid, being around $0.001$ for our choice of $N_h$. This
can be observed in Figure \ref{fig5.0}, where we consider a reference solution computed with $N_h=2\cdot10^4$ spatial nodes and approximated in time by using {\tt expmv}}.

We set $r=0.06$, $\sigma=0.05$, and $K=80$. We plot the error for a selection of tolerances for
the cases $t=1$ (Figure \ref{fig5.1}) and $t=10$ (Figure \ref{fig5.2}). In Figure \ref{fig5.5}
we show the selected profiles of integration for the tolerance $\tol=5\cdot10^{-5}$ at time $t=1$
and $t=10$. {In the pictures we also highlight, for each profile, the estimated number
of quadrature nodes $N$ according to (\ref{eq:N}), to reach the target accuracy.}
\begin{figure}[t]
	\centering{
		
		\includegraphics[width=0.48\textwidth]{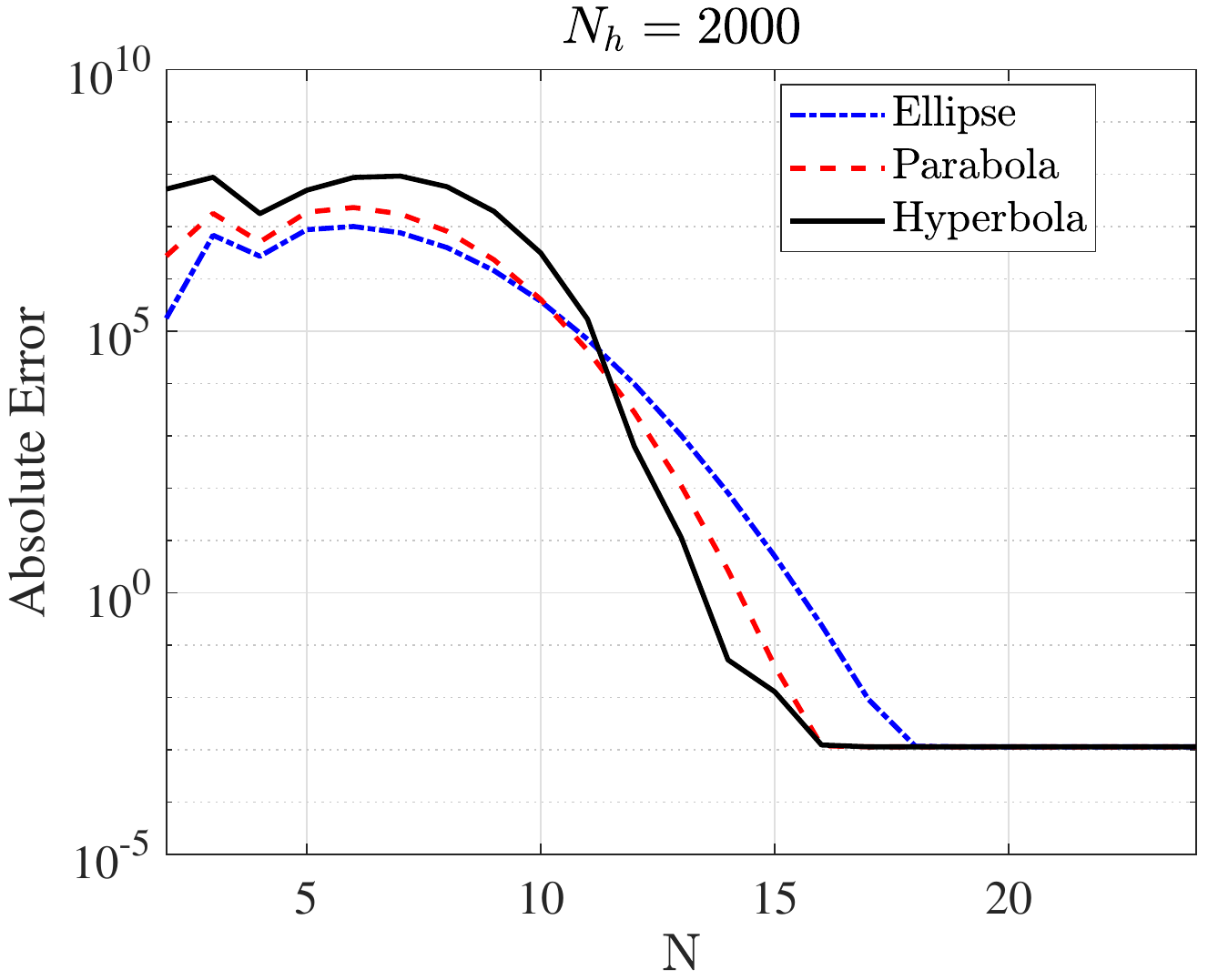}
		
	}	
	
	\caption{Error vs number of nodes for Black-Scholes, $t=1$. The asymptotic value reached corresponds to the error in space when {$N_h=2000$}.}
	\label{fig5.0}
\end{figure}
\begin{figure}[t]
	\centering{
		\subfigure{
			\includegraphics[width=0.48\textwidth]{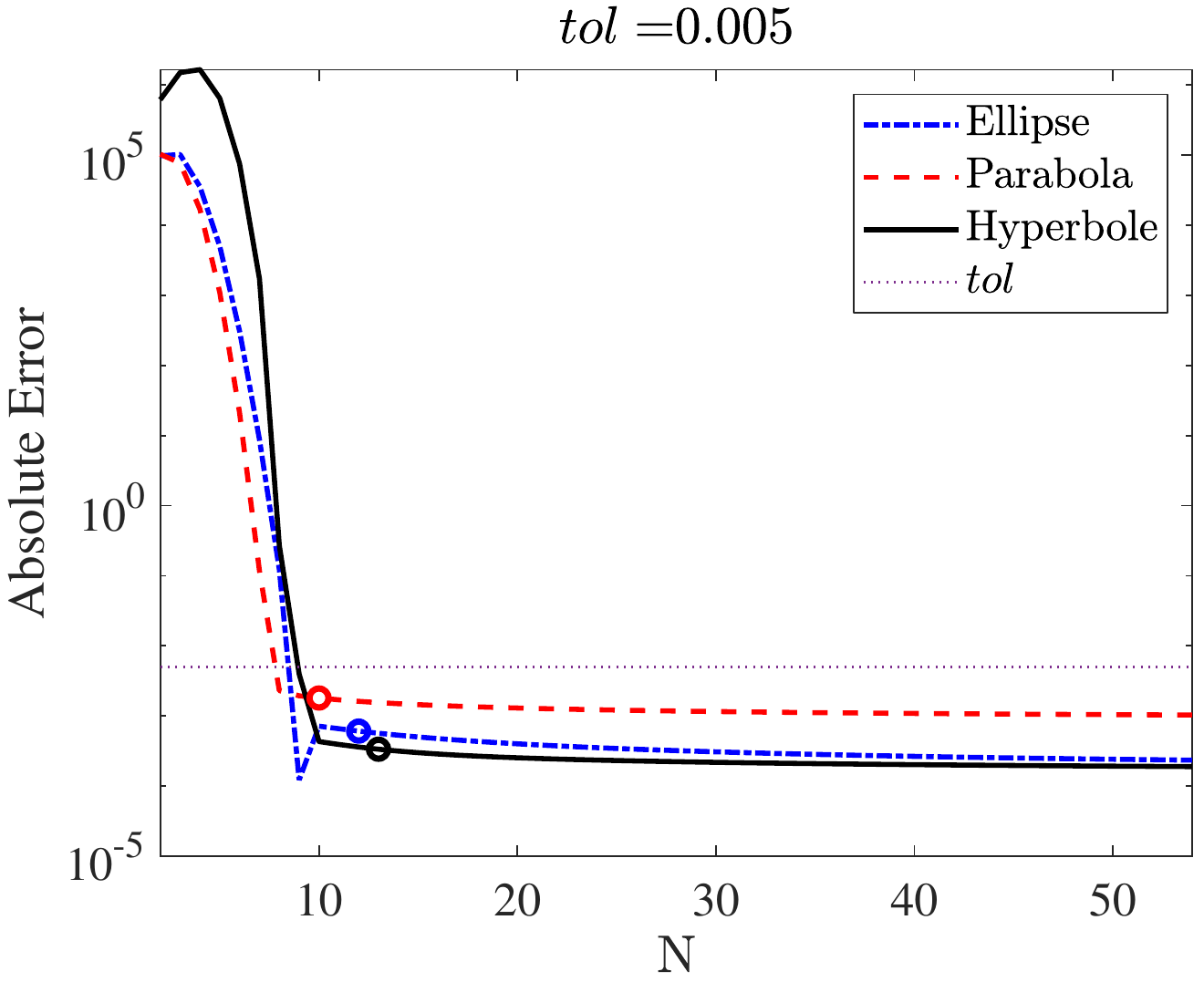}}
		\subfigure{
			\includegraphics[width=0.48\textwidth]{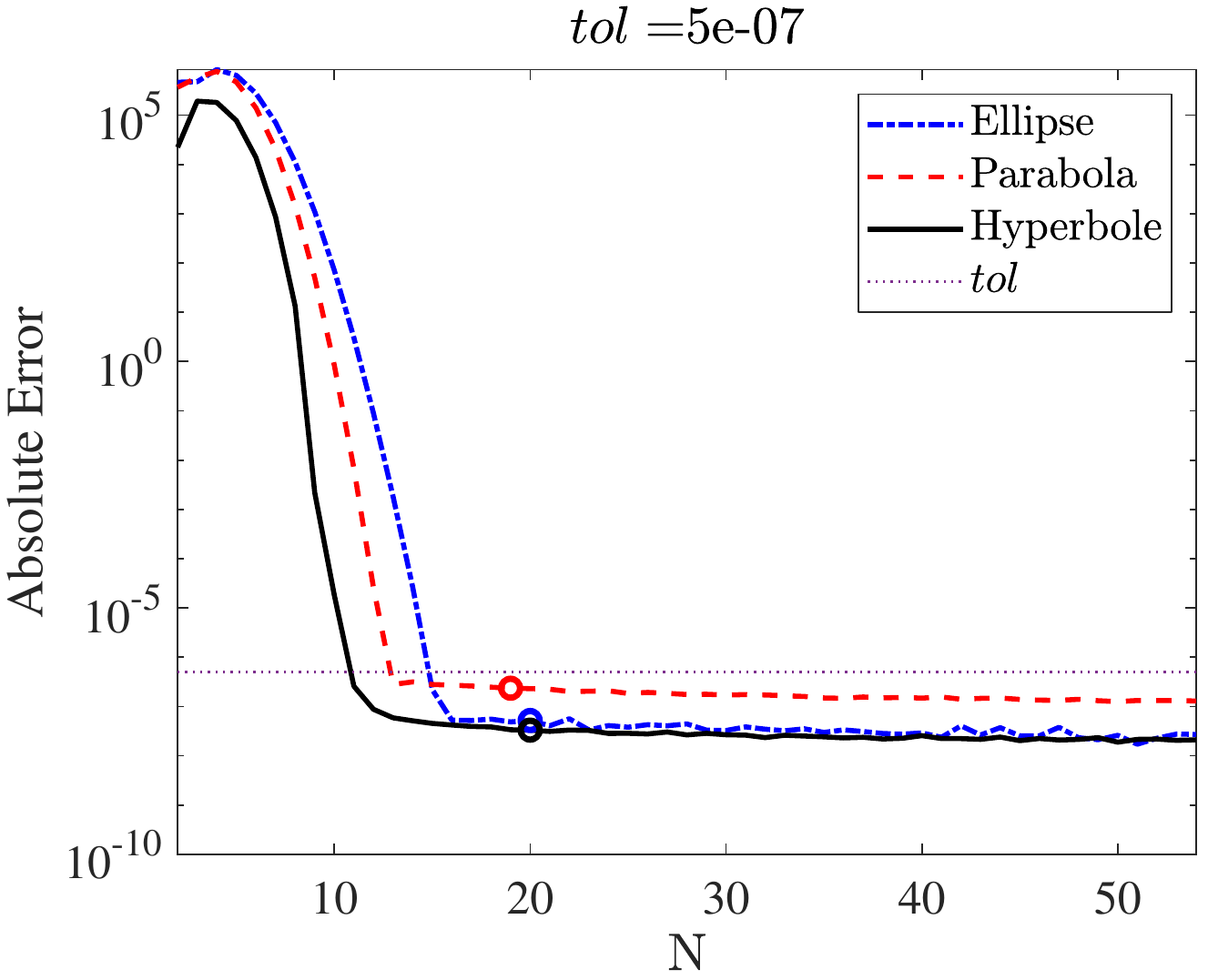}}
	}	
	\centering{
		\subfigure{
			\includegraphics[width=0.48\textwidth]{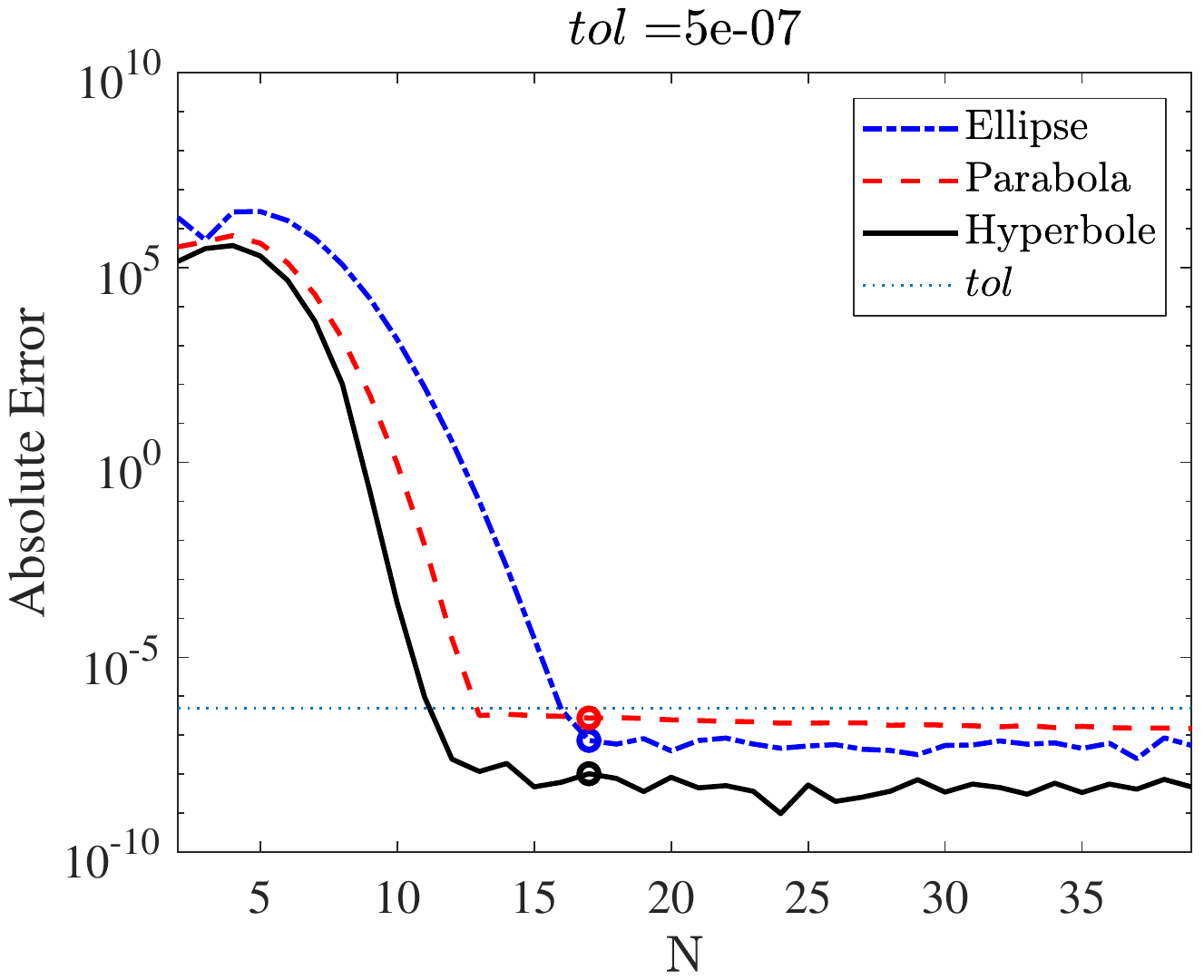}}
		\subfigure{
			\includegraphics[width=0.48\textwidth]{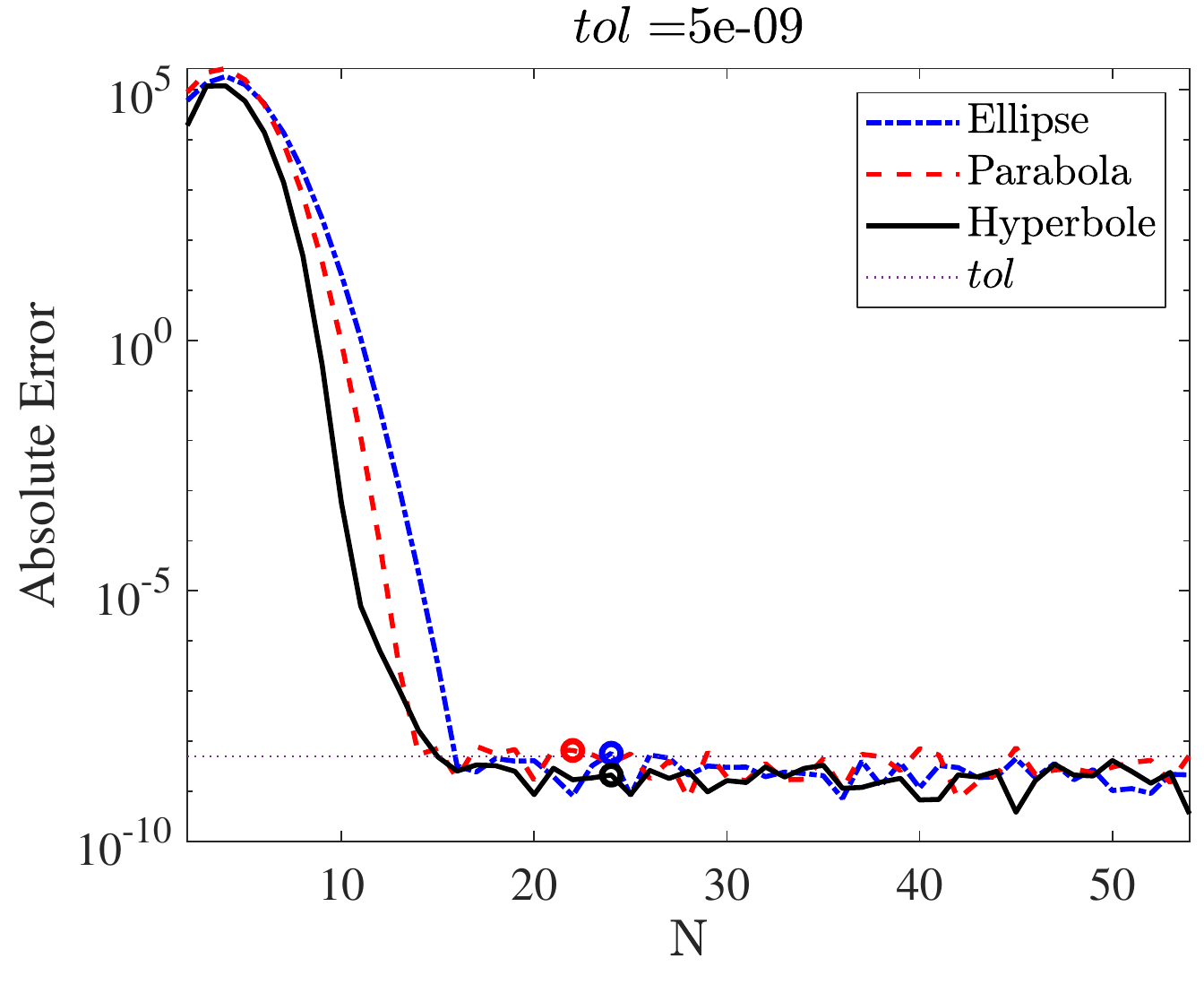}}
	}			
	
	\caption{Error vs number of nodes for Black-Scholes, $t=1$. Comparison for different values of the tolerance among the elliptic, parabolic and hyperbolic contours.}
	\label{fig5.1}
\end{figure}
\begin{figure}[t]
	\centering{
		\subfigure{
			\includegraphics[width=0.48\textwidth]{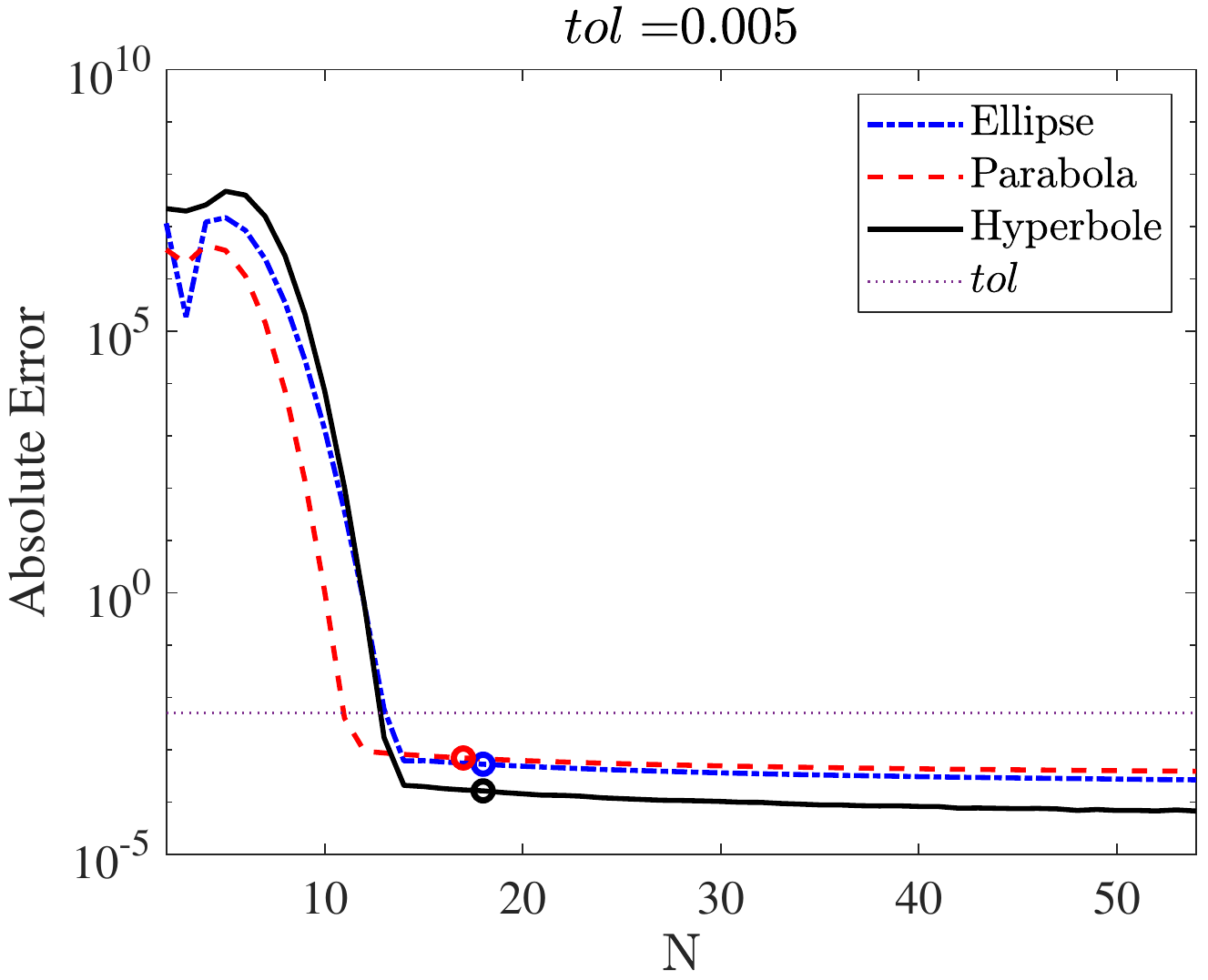}}
		\subfigure{
			\includegraphics[width=0.48\textwidth]{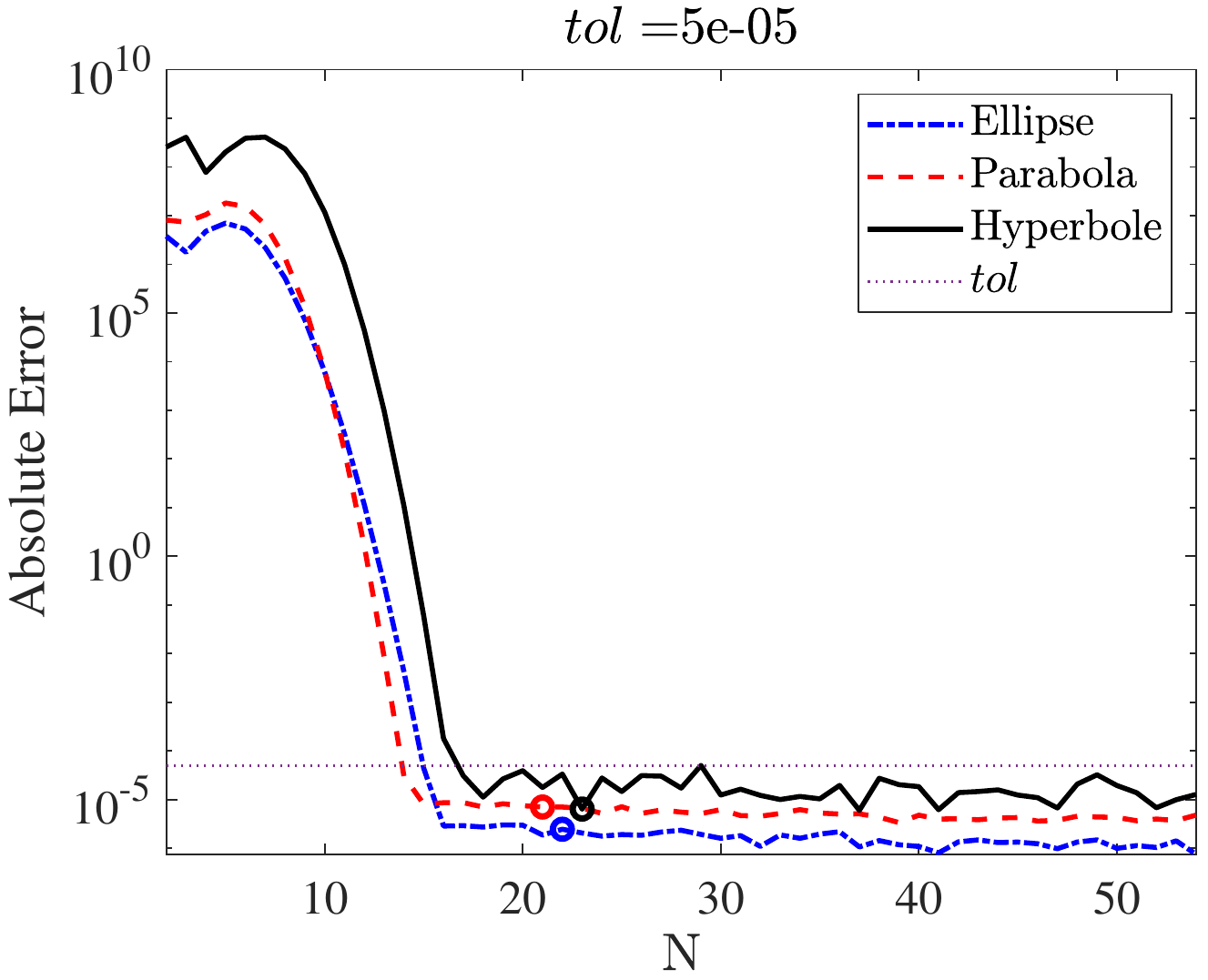}}
	}	
	\centering{
		\subfigure{
			\includegraphics[width=0.48\textwidth]{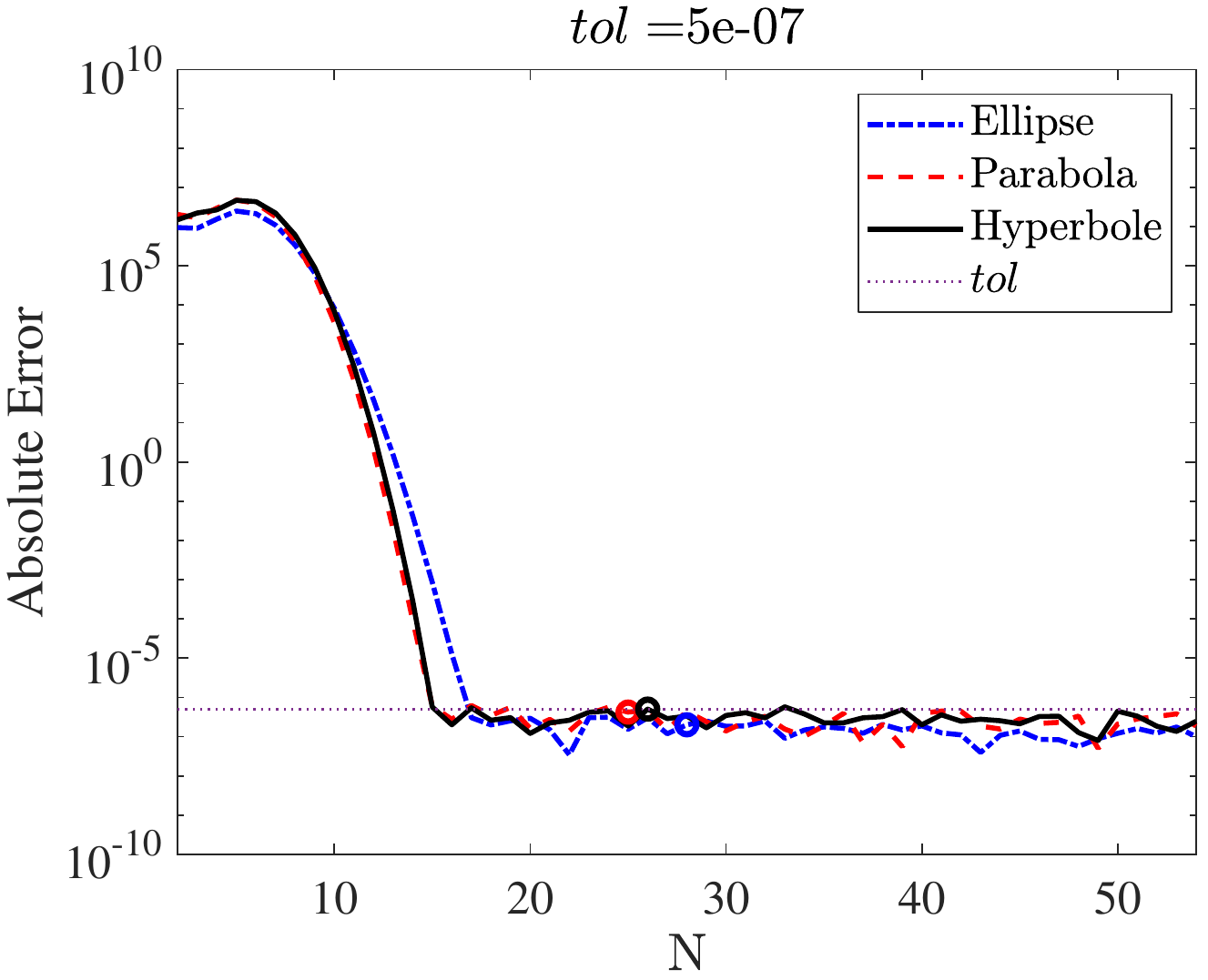}}
		\subfigure{
			\includegraphics[width=0.48\textwidth]{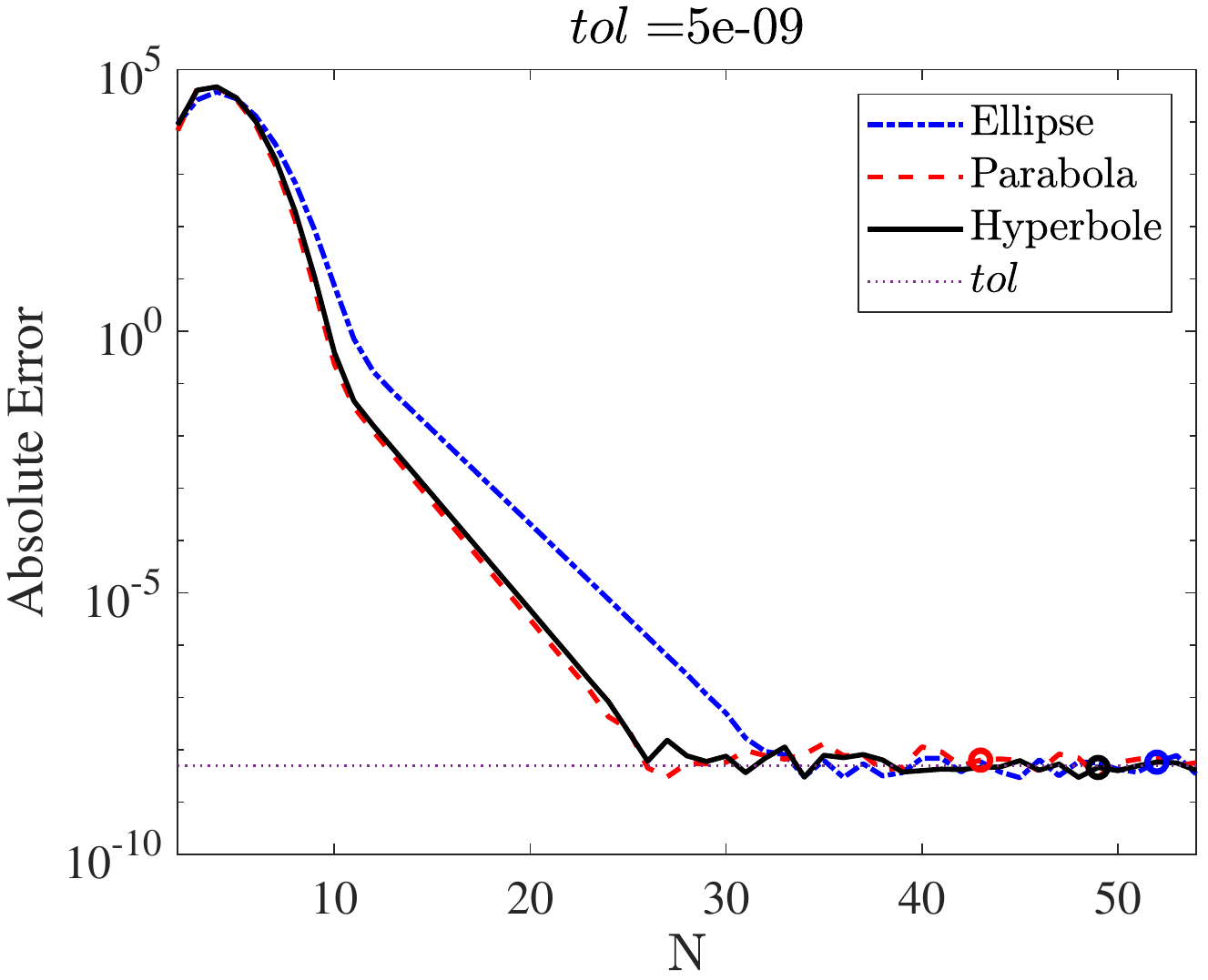}}
	}			
	
	\caption{Error vs number of nodes for Black-Scholes, $t=10$. Comparison for different values of the tolerance among the elliptic, parabolic and hyperbolic contours.}
	\label{fig5.2}
\end{figure}
\begin{figure}[]
	
		\centering{
		\subfigure{
			\includegraphics[width=0.48\textwidth]{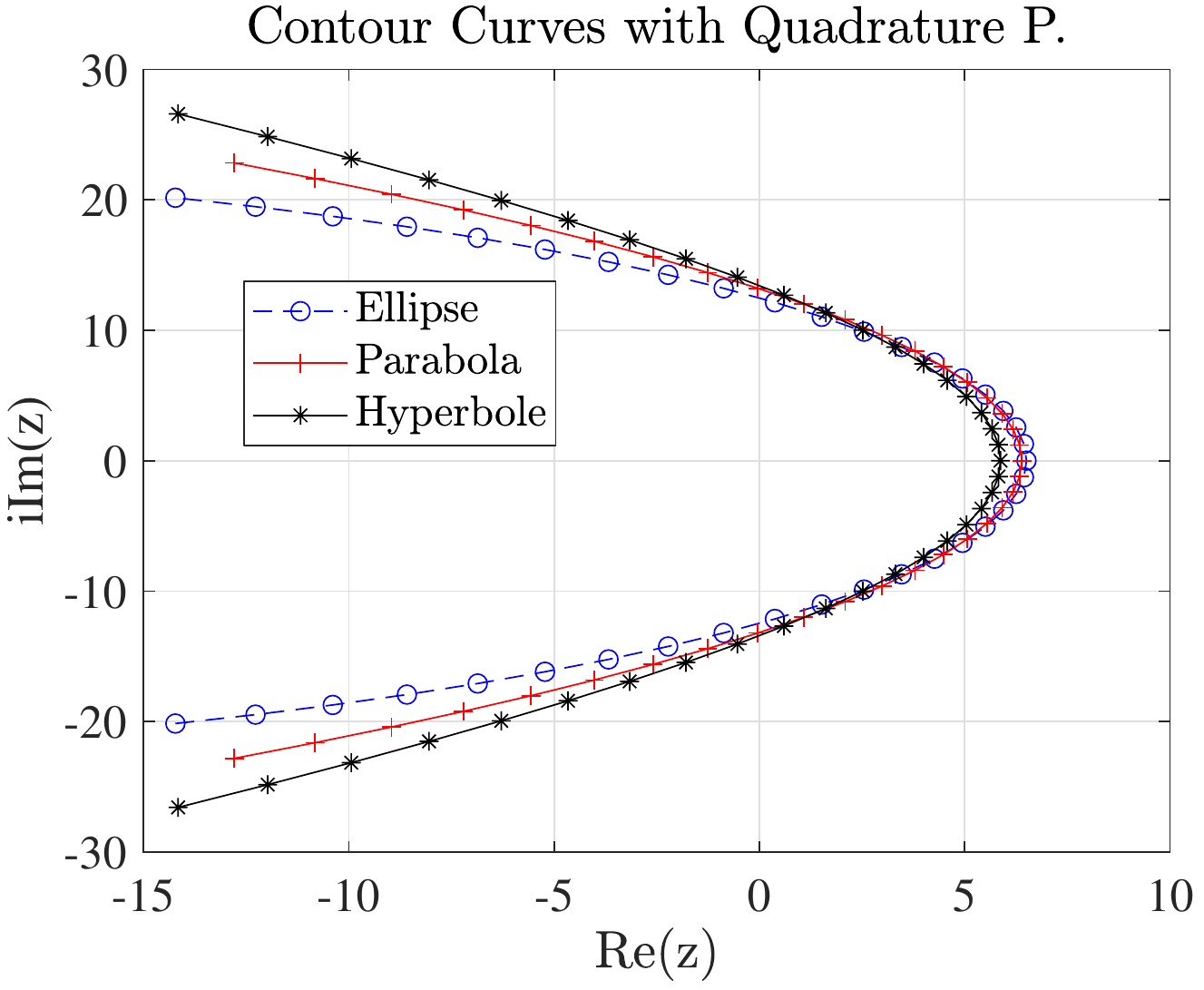}}
		\subfigure{
			\includegraphics[width=0.48\textwidth]{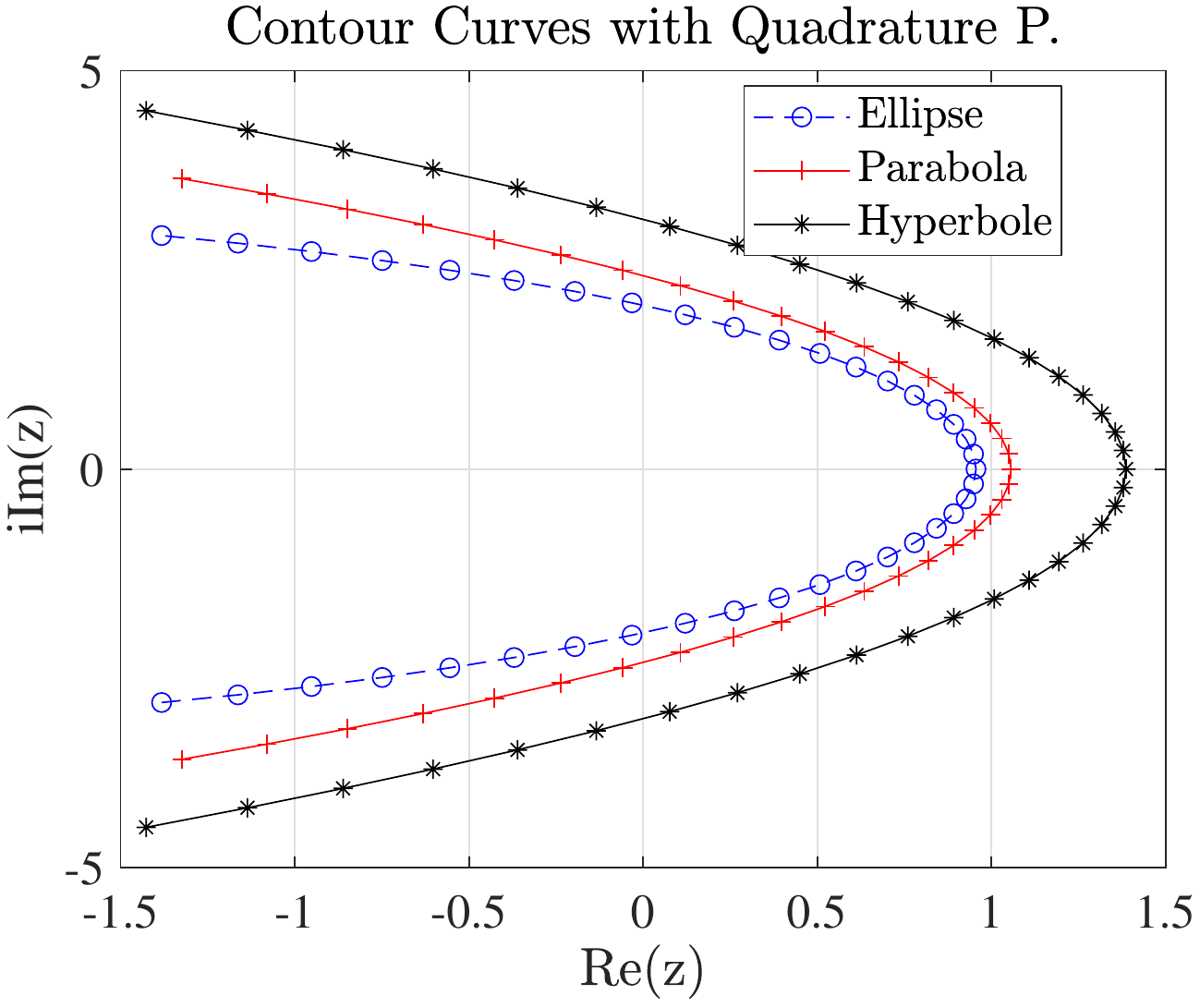}}
	}
			
	\caption{Example of integration profiles for the Black-Scholes problem for tolerance $\tol=5\cdot10^{-5}$ at time $t=1$ (left) and $t=10$ (right).}
	\label{fig5.5}
\end{figure}
All contour methods are effective when applied to the Black-Scholes test problem.
The hyperbolic contour is slightly faster in reaching the target
accuracy for $t=1$, while for $t=10$ the parabolic contour seems to provide the
best results. In Table \ref{T1} we report the values of $N$ estimated by (\ref{eq:N}), which are
similar for the three types of contour.
Also, as expected, the estimate returns larger $N$ when a higher accuracy is required.
{We note that for $t=10$ and $\tol=5\cdot 10^{-9}$ the number of
estimated quadrature points increases significantly. This is due to the fact that when
considering larger times, the exponential term may considerably amplify the numerical
error introduced by the solution of linear systems.
To avoid this, one has to select integration profiles which lie in a region with small
positive real part; this results in lower value of $a$ and thus in a larger number of
quadrature nodes in order to reach the prescribed accuracy (see (\ref{eq:minimiza})).}
\begin{table}
	\centering
	\begin{tabular}{|c|ccc|ccc|}
		\hline
		\multirow{2}{*}{$\tol$}&\multicolumn{3}{c|}{$N$ at $t=1$}&\multicolumn{3}{c|}{$N$ at $t=10$}\\
		\cline{2-7}
		& Ellipse & Parabola  & Hyperbola & Ellipse & Parabola & Hyperbola\\
		\hline
		\hline
		$5\cdot10^{-3}$   & $12$    &$11$& $13$&$18$& $17$&$18$ \\
		
		$5\cdot10^{-5}$    & $16$ &$15$& $16$&$22$&$21$&$23$\\
		
		$5\cdot10^{-7}$  & $20$ &$19$& $20$&$28$&$25$&$26$\\
		
		$5\cdot10^{-9}$  & $24$ &$22$& $24$&$52$&$43$&$49$\\
		
		\hline
	\end{tabular}
	\caption{Estimate (\ref{eq:N}) for the Black-Scholes test problem at time $t=1$ and $t=10$. Results for the elliptic, parabolic and hyperbolic profiles.}
	\label{T1}
\end{table}
\subsection{Heston equation}
The Heston equation \cite{He} is given by
\begin{equation}
\frac{\partial u}{\partial \tau}=\frac{1}{2}s^2v\frac{\partial^2u}{\partial s^2}+\rho\sigma sv\frac{\partial^2 u}{\partial s \partial v}+\frac{1}{2}\sigma^2v\frac{\partial^2u}{\partial v^2}+(r_d-r_f)s\frac{\partial u}{\partial s}+\kappa(\eta-v)\frac{\partial u}{\partial v}-r_du.
\label{5.2}
\end{equation}
The unknown function $u(s, v,\tau)$ represents the price of a European option when at time $t-\tau$ the corresponding asset price is equal to $s$ and its variance is $v$. We consider the equation on the unbounded domain
\begin{equation*}
0\le\tau\le t,\;\;s>0,\;v>0,
\end{equation*}
where the time $t$ is fixed. The parameters $\kappa>0$, $\sigma>0$, and $\rho\in[-1,1]$ are given. Moreover equation (\ref{5.2}) is usually considered under the condition $2\kappa\eta>\sigma^2$ that is known as the Feller condition (see \cite{JKWW}). We take equation (\ref{5.2}) together with the initial condition
\begin{equation*}
u(s,v,0)=\max(0,s-K),
\end{equation*}
where $K>0$ is fixed a priori (and represents the strike price of the option), and boundary condition
\begin{equation*}
u(L,v,\tau)=0,\;\;0\le\tau\le t.
\end{equation*}
For the numerical solution of (\ref{5.2}), we need to choose a bounded domain of integration, we follow \cite{ITHF} for this issue. In particular, we fix two positive constants $S$, $V$ and we let the two variables $s$, $v$ vary in the set
\begin{equation*}
0\le s\le S,\;\; 0\le v\le V.
\end{equation*}
On the new boundary, we need to add two more conditions (specific for the European call option),
\begin{align*}
&\frac{\partial u}{\partial s}(S,v,\tau)=\e^{-r_f\tau},\;\;0\le\tau\le t,\\
&u(s,V,\tau)=s\e^{-r_f\tau},\;\;0\le\tau\le t,
\end{align*}
which are treated analogously yo the boundary condition in (\ref{ibcBS}).
We use the spatial discretization proposed in \cite{ITHF} for $k=1.5$, $\eta=0.04$, $\sigma=0.3$, $\rho=-0.9$, $r_d=0.025$, $r_f=0$, $K=100$, $L=0$, $S=8K$, $V=5$. In Figure~\ref{fig5.3}, we plot the error for a selection of tolerances for $t=1$ and in Figure~\ref{fig5.4} we do the same for $t=10$. In Figure~\ref{fig5.6} we show the selected profiles of integration for $\tol=5\cdot10^{-5}$ at $t=1$ and $t=10$.
\begin{figure}
	\centering{
		\subfigure{
			\includegraphics[width=0.48\textwidth]{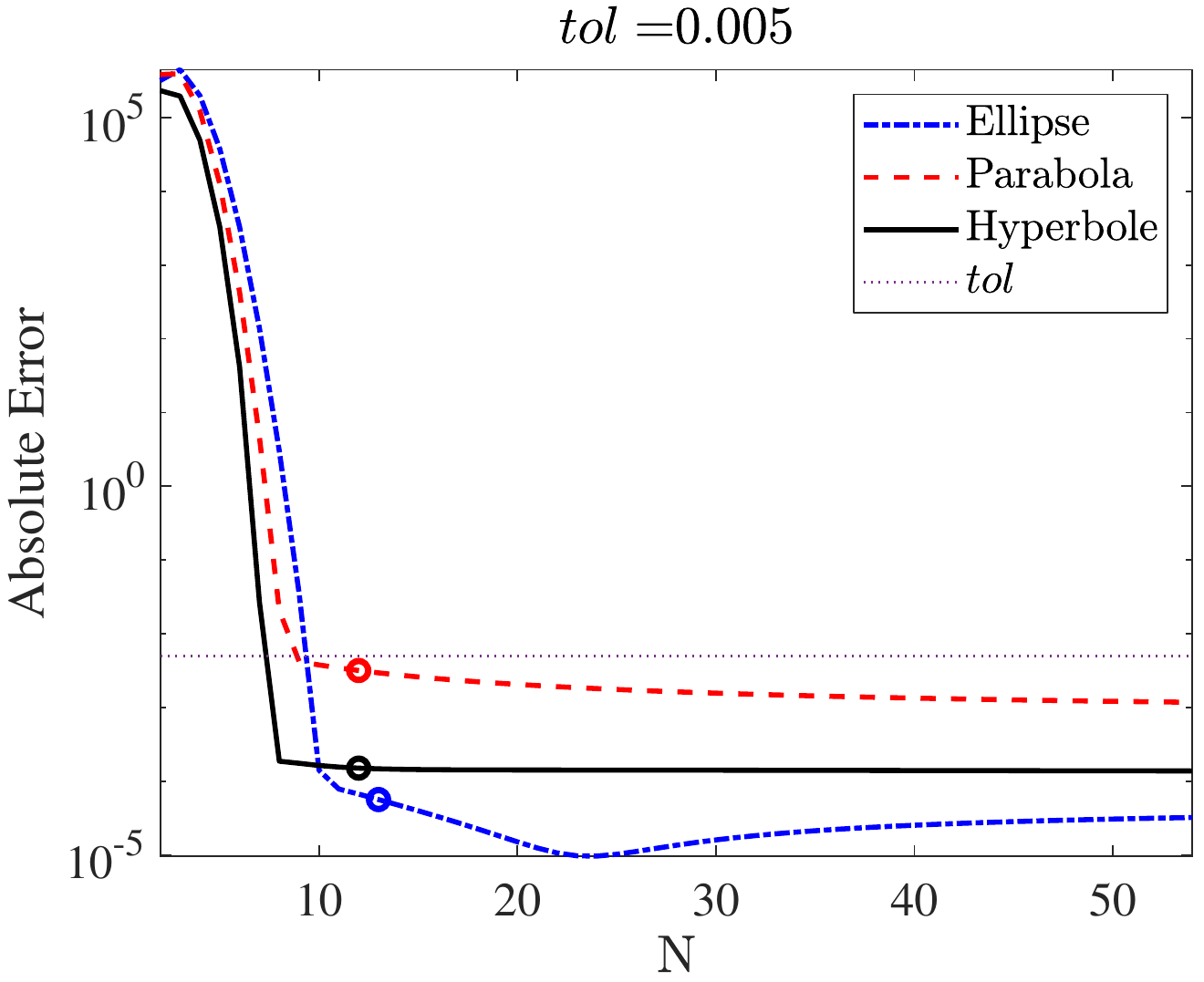}}
		\subfigure{
			\includegraphics[width=0.48\textwidth]{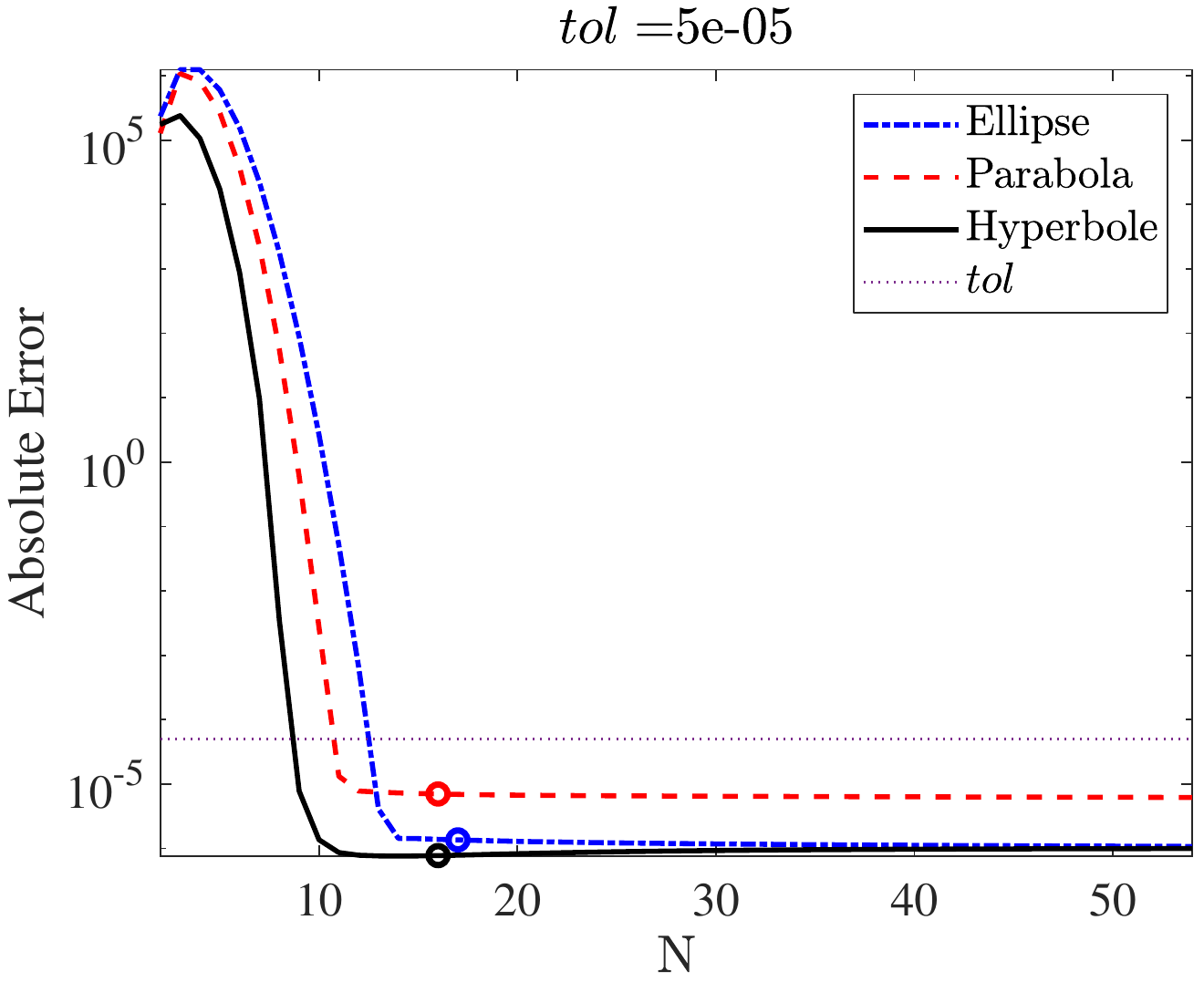}}
	}	
	\centering{
		\subfigure{
			\includegraphics[width=0.48\textwidth]{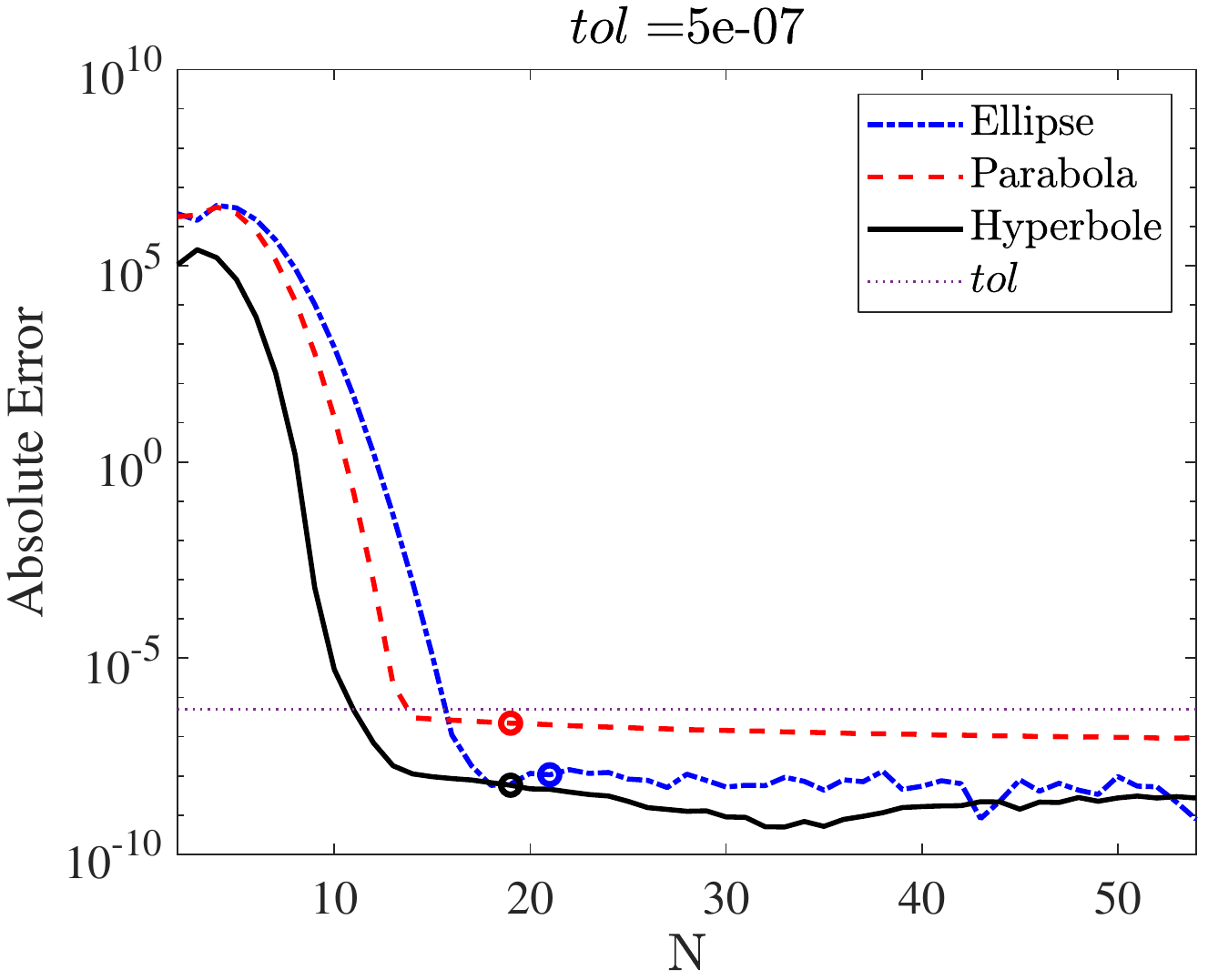}}
		\subfigure{
			\includegraphics[width=0.48\textwidth]{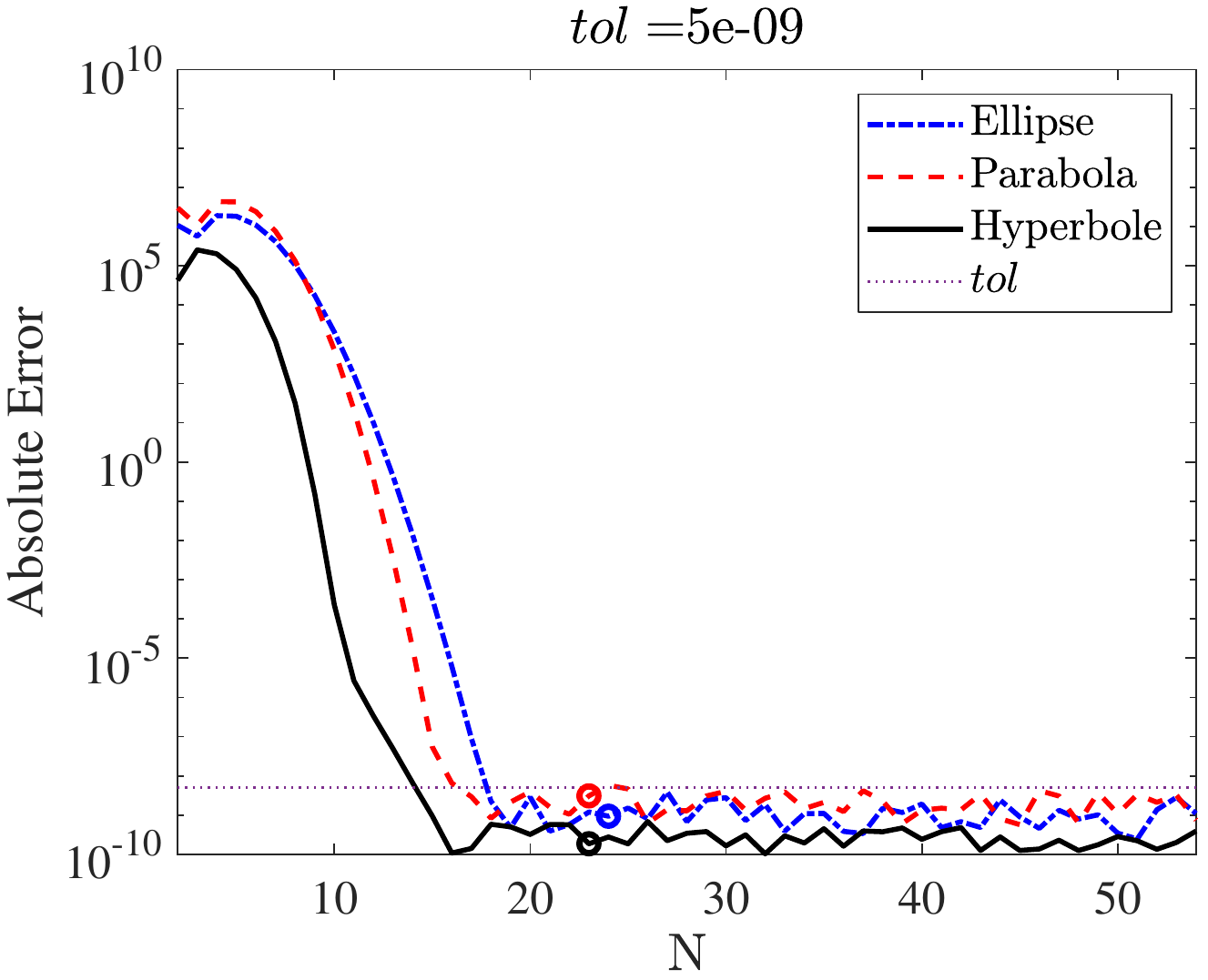}}
	}			
	
	\caption{Error vs number of nodes for Heston, $t=1$. Comparison for different values of the tolerance among the elliptic, parabolic and hyperbolic contours.}
	\label{fig5.3}
\end{figure}
\begin{figure}
	\centering{
		\subfigure{
			\includegraphics[width=0.48\textwidth]{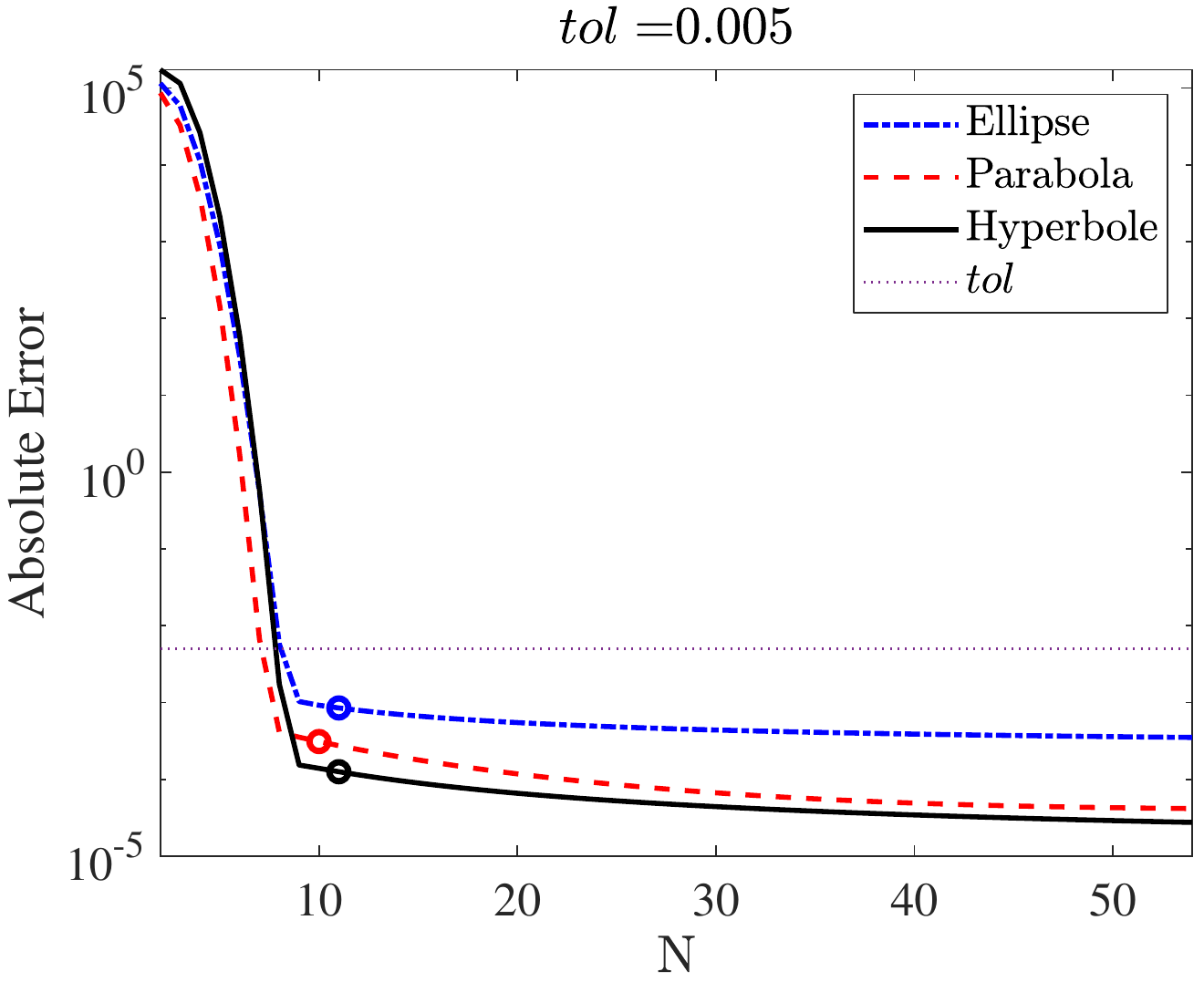}}
		\subfigure{
			\includegraphics[width=0.48\textwidth]{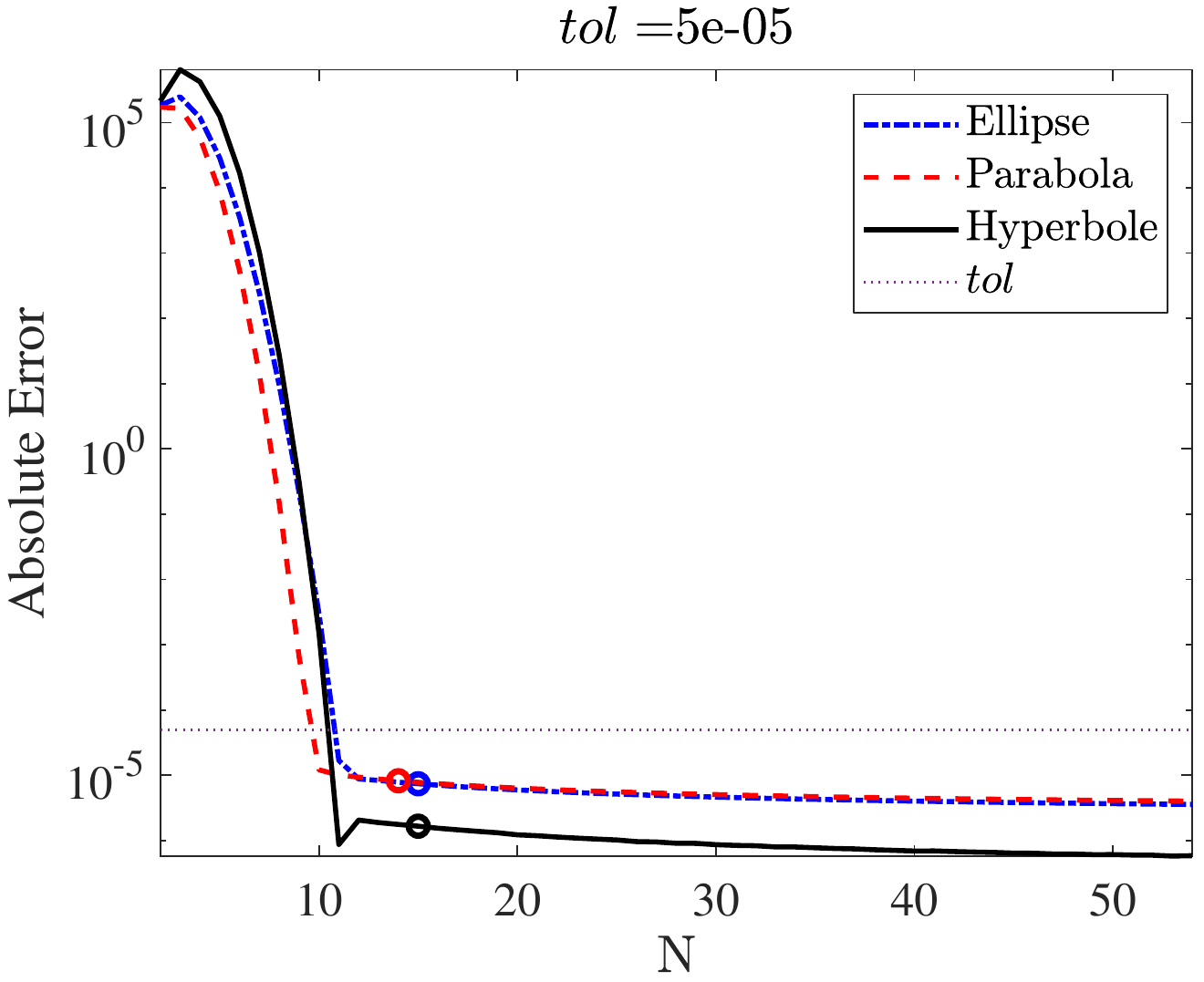}}
	}	
	\centering{
		\subfigure{
			\includegraphics[width=0.48\textwidth]{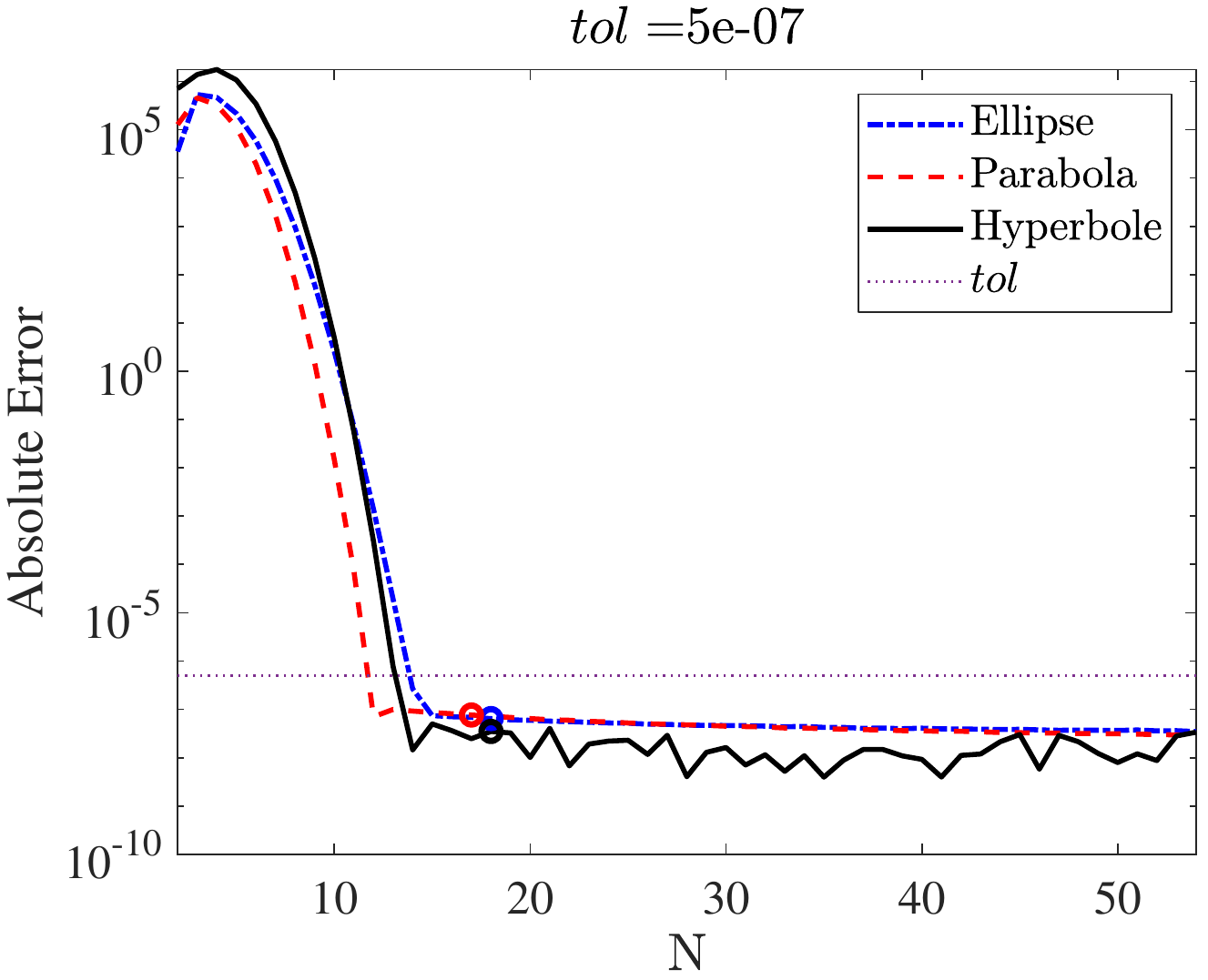}}
		\subfigure{
			\includegraphics[width=0.48\textwidth]{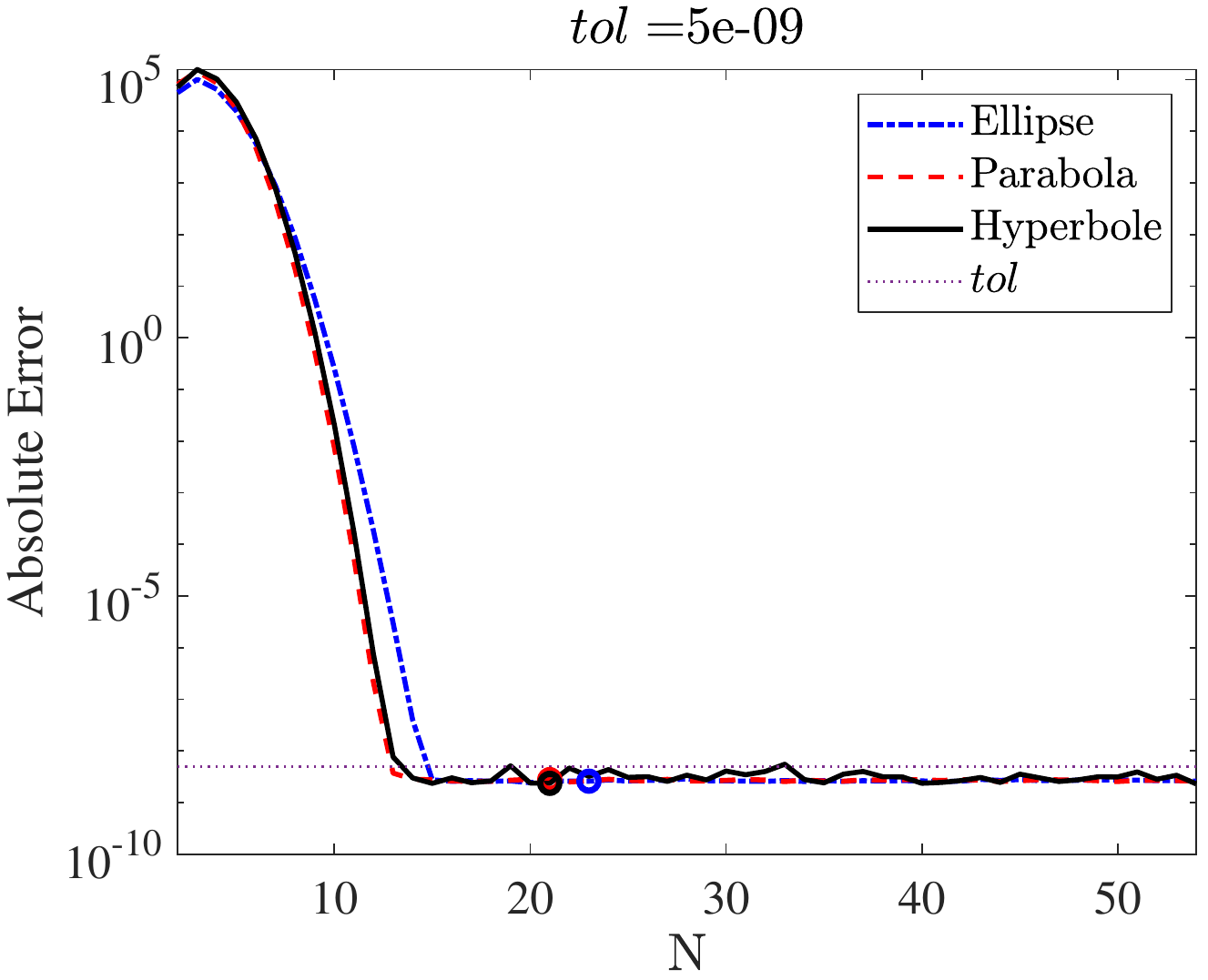}}
	}			
	
	\caption{Error vs number of nodes for Heston, $t=10$. Comparison for different values of the tolerance among the elliptic, parabolic and hyperbolic contours.}
	\label{fig5.4}
\end{figure}
\begin{figure}[]

		\centering{
	\subfigure{
		\includegraphics[width=0.48\textwidth]{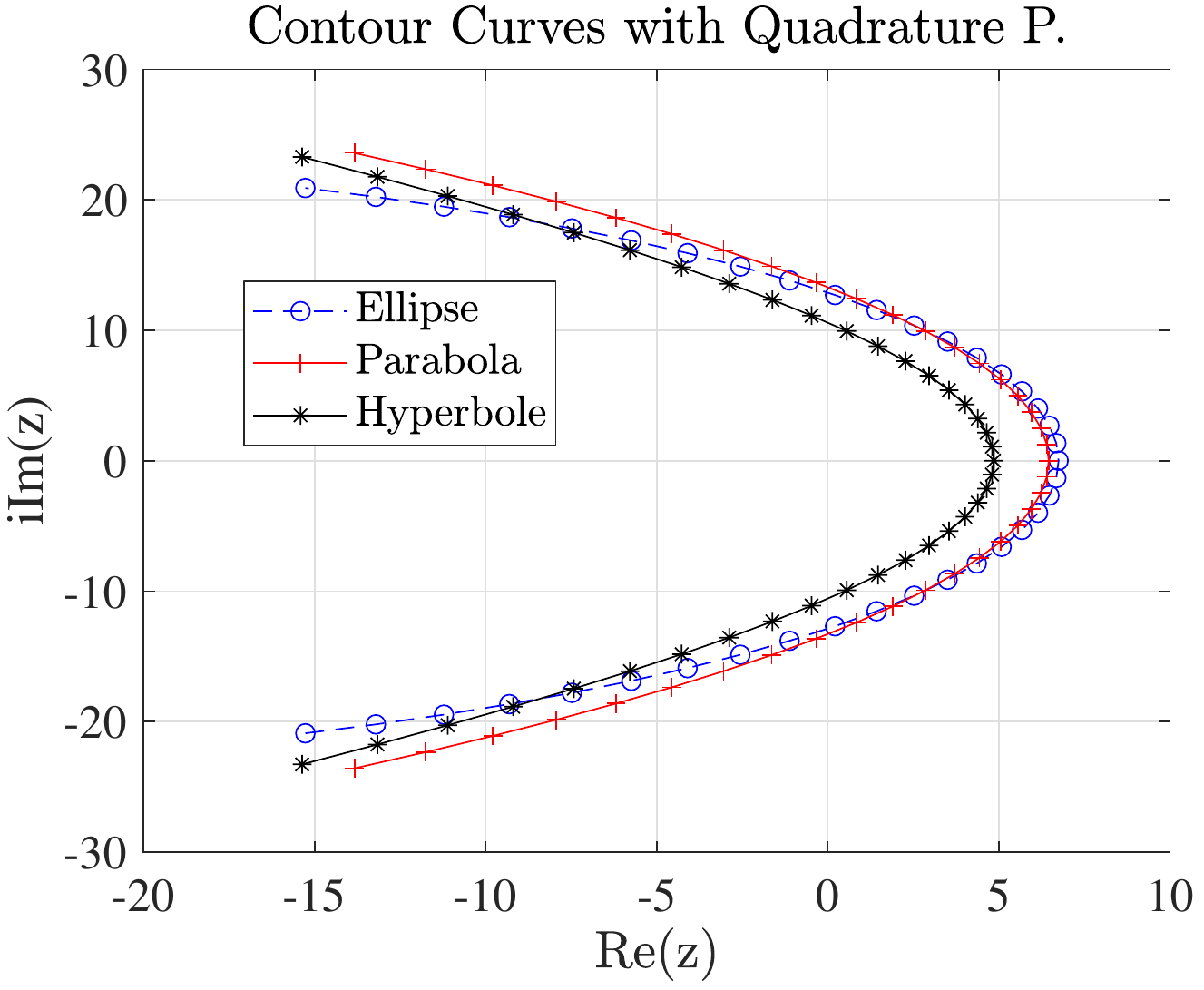}}
	\subfigure{
		\includegraphics[width=0.48\textwidth]{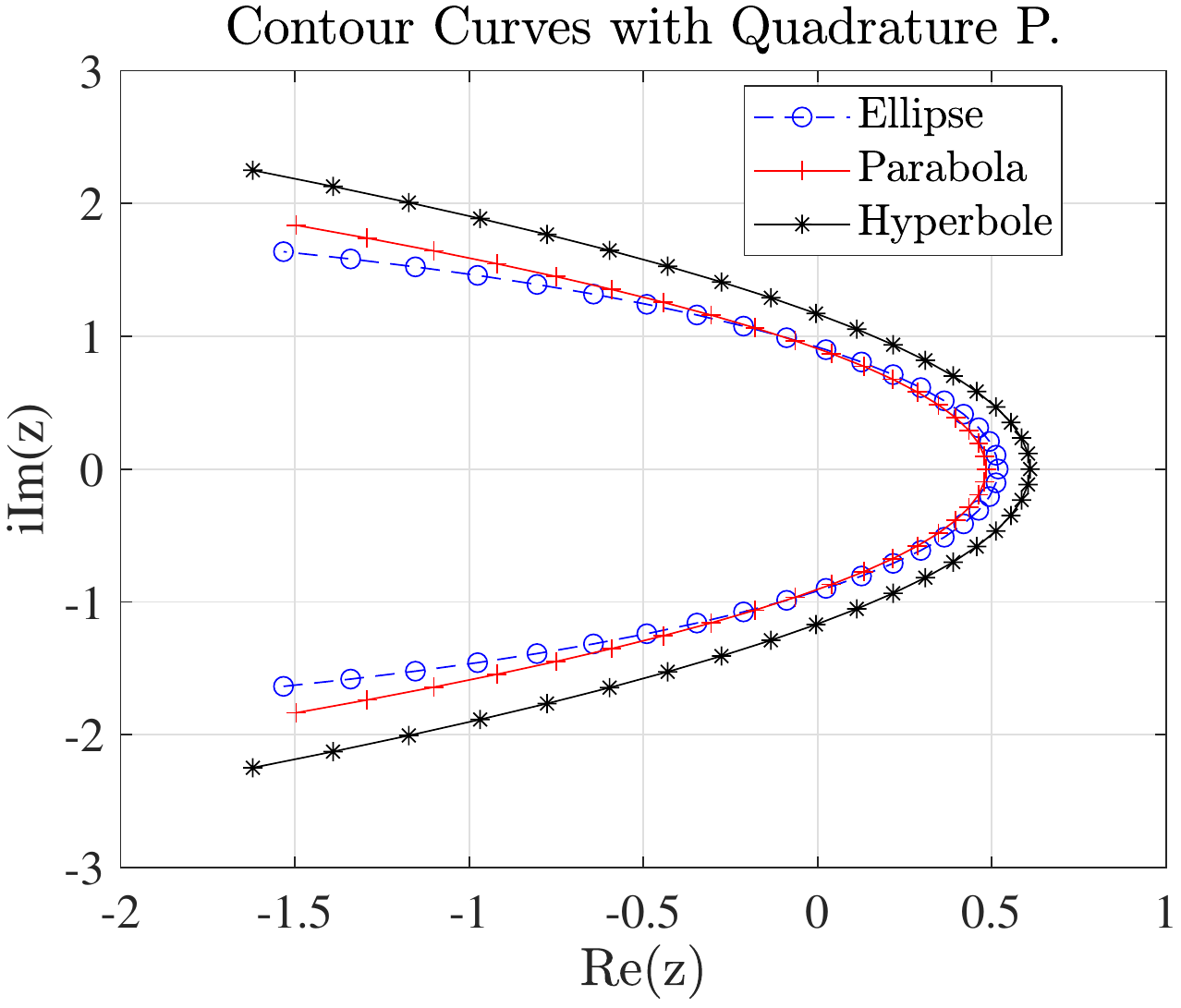}}
    }

    \caption{Example of integration profiles for the Heston problem for tolerance $\tol=5\cdot10^{-5}$ at time $t=1$ (left) and $t=10$ (right).}
	\label{fig5.6}
\end{figure}

We obtain similar results for the Heston test problem to the ones reported for the Black-Scholes problem. The hyperbolic contour is again the fastest one in reaching the target accuracy for $t=1$. {For $t=10$ elliptic, parabolic and hyperbolic contours show almost the same behaviour}. In Table \ref{T2} we report the values of $N$ estimated by (\ref{eq:N}) and again we do not observe remarkable differences for the three types of contour.
\begin{table}
	\centering
	\begin{tabular}{|c|ccc|ccc|}
		\hline
		\multirow{2}{*}{$\tol$}&\multicolumn{3}{c|}{$N$ at $t=1$}&\multicolumn{3}{c|}{$N$ at $t=10$}\\
		\cline{2-7}
		& Ellipse & Parabola  & Hyperbola & Ellipse & Parabola & Hyperbola\\
		\hline
		\hline
		$5\cdot10^{-3}$   & $13$    &$12$& $12$&$11$& $10$&$11$ \\
		
		$5\cdot10^{-5}$    & $17$ &$16$& $16$&$15$&$14$&$15$\\
		
		$5\cdot10^{-7}$  & $21$ &$19$& $19$&$18$&$17$&$18$\\

		$5\cdot10^{-9}$  & $24$ &$23$& $23$&$23$&$21$&$21$\\
		
		\hline
	\end{tabular}
	\caption{Estimate (\ref{eq:N}) for the Heston test problem at time $t=1$ and $t=10$. Results for the elliptic, parabolic and hyperbolic profiles.}
	\label{T2}
\end{table}
\subsection{Extension to the case of time intervals}\label{subsec:TW}
This situation is particularly important when the time $T$ at which the solution is required is only known approximately.
In this subsection we extend the results in \cite[Section 7]{GLN20} to determine the solution on an entire time window $[t_0,t_1]$,
with \[t_1=\Lambda t_0,\;\Lambda>1.\]
Most part of the computational effort in the evaluation of (\ref{eq:tr}) is devoted to the solution of linear systems with matrices $zI-A$, for $z$ the quadrature nodes. However this inversion does not involve the time $t$, that only appears in the exponential term. Thus, it is of high interest being able to use a unique integration contour on a whole time interval $[t_0,t_1]$. This requires a suitable modification of our strategy to construct the integration contour. We also aim to determine an acceptable amplitude of the time window.

Concerning the profile of integration, which is uniquely defined by the parameter $a$, we start by constructing $\Gamma_{left}$
as explained in Section \ref{sec:pseudospec}, for $t=t_0$ (lower end of the time interval). The choice of $t_0$ reflects into
the setting of the parameter $z^L$ as explained in Section \ref{sec:param}.
An application of the construction described in Section \ref{sec:pseudospec} gives the interpolation point $d+ir$, which uniquely
defines $\Gamma_{left}$.
Then we determine $a$ by following the procedure described in Section~\ref{sec:param} with $t=t_1$, the upper end of the time interval.
We use the profile determined in this way for all $t\in[t_0,t_1]$.

Despite the fact that, theoretically, the amplitude of the window can be arbitrarily large,
we need to keep into account the role of the exponential term in amplifying the error introduced
by the conditioning of $zI-A$.
We approximate the exact solution $u(t)$ by the linear combination
\begin{equation}
	\tilde{I}_N=\frac{c}{Ni}\sum_{j=1}^{N-1}\e^{z(x_i)t}\hat{u}_jz'(x_j),
\end{equation}
where $\hat{u}_j=\hat{u}(z(x_j))+\rho_j$ and $\rho_j$ is the error in the numerical solution of the linear system
\begin{equation}
	((z(x_j))I-A)\hat{u}=u_0+\hat{b}(z(x_j)),
\end{equation}
for $x_j$ our quadrature nodes and with the assumption that the nodes $x_j$, the parametrization $z(x)$, and its
derivative $z'(x)$ are computed exactly. The actual error in our computation is given by
\begin{equation}
	\tilde{err}_N=\big|u(t)-\tilde{I}_N\big|,
\end{equation}
that we can estimate in the following way:
\begin{equation}
	\widetilde{err}_N=\Big|u(t)-\frac{c}{Ni}\sum_{j=1}^{N-1}\e^{z(x_i)t}\hat{u}_jz'(x_j)\Big|\le err_T+err_N+err^{num}_N,
\end{equation}
where $err_T$ is the component of the total error due to the truncation of the integral, $err_N$ is the component
due to the approximation of the integral by the quadrature rule and finally $err^{num}_N$ is the component due to
the fact that we operate with finite precision arithmetic.
This last component reads as
\begin{align}
	{err}_N^{num}=&\Bigg|\frac{c}{Ni}\sum_{j=1}^{N-1}\e^{z(x_i)t}(\hat{u}(z(x_j))-\hat{u}_j)z'(x_j)\Bigg|\nonumber\\
	\le&\frac{c_{\max}}{N}\sum_{j=1}^{N-1}\e^{\real{(z(x_i))}t}|\rho_j||z'(x_j)|.\label{6.1}
\end{align}
Therefore, once the integration contour for a time window $[t_0,\;t_1]$ is fixed, we can observe that $err_N^{num}$
does not remain constant but instead changes exponentially with respect to time.
Since we want that $\widetilde{err}_N \le \tol$ for all $t\in[t_0,\;t_1]$, we need to check that (\ref{6.1}) is below
$\tol$ at $t_0$ and $t_1$.
If so, we proceed with the computation, otherwise we halve the amplitude of the window by increasing $t_0$ or
decreasing $t_1$, depending on which of the two time instants fails to satisfy the inequality (\ref{6.1}) smaller
than $\tol$. If for both $t_0$ and $t_1$ (\ref{6.1}) is greater than $\tol$ we deduce that the problem is possibly ill
conditioned and we suggest to increase $\tol$.

About the truncation parameter $c$ we observe that this value decreases as time increases, therefore we run Algorithm
\ref{A1} for $t=t_0$ and we use the computed value of $c$ in $t_0$ for every $t\in[t_0, t_1]$.
Doing so we expect to have the final error proportional to $\tol$ for $t=t_0$ and smaller than $\tol$ when $t>t_0$.

We show few numerical experiments for both Black-Scholes and Heston equations. In particular, we make the experiments
on the intervals $[0.1, \Lambda0.1]$ and $[1, \Lambda1]$ for $\Lambda=10$.
In the plots of Figures \ref{fig5.7}, \ref{fig5.8}, \ref{fig5.9}, \ref{fig5.10} we show the numerical results for
Black-Scholes and Heston equations. The target tolerance chosen is $\tol=5\cdot10^{-8}$ for Black-Scholes and
$\tol=5\cdot10^{-4}$ for Heston. We also fix $z^R = 0.06$ for both problems.
We note that for the Heston problem the time window $[1, 10]$ has been found to be too large by the procedure
previously described, therefore the initial time was automatically increased.
For the Black-Scholes problem a slow-down in convergence is observed as $t$ increases, we see this phenomena also
in the Heston problem even if in a less remarkable way. This because, as $t$ increases, the truncation value used
$c$ is larger than what should be required to achieve the prescribed tolerance $\tol$.
This results into a smaller error for $N$ large enough but also produces a slower convergence.
\begin{figure}
	\centering{
		\subfigure{
			\includegraphics[width=0.48\textwidth]{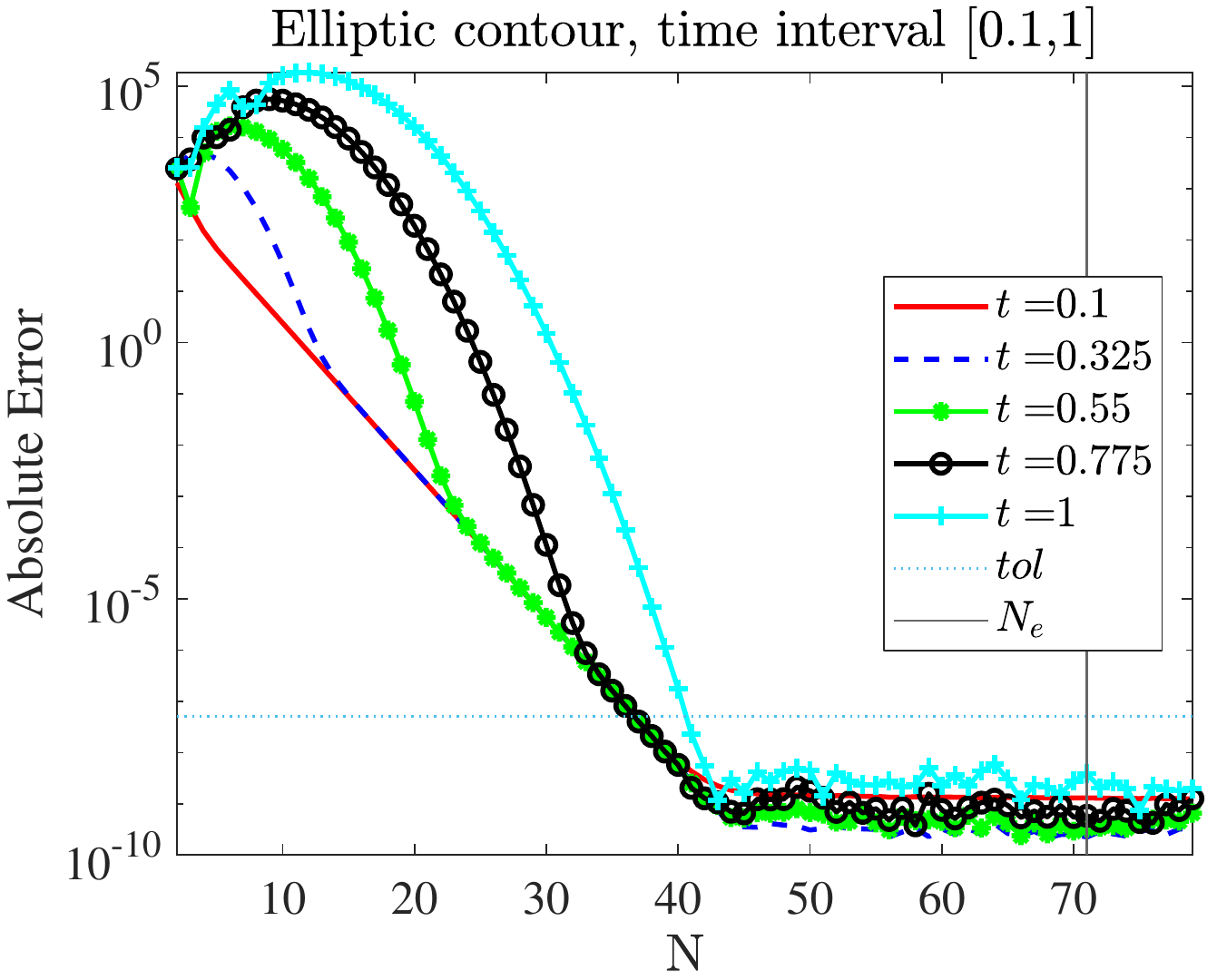}}
		\subfigure{
			\includegraphics[width=0.48\textwidth]{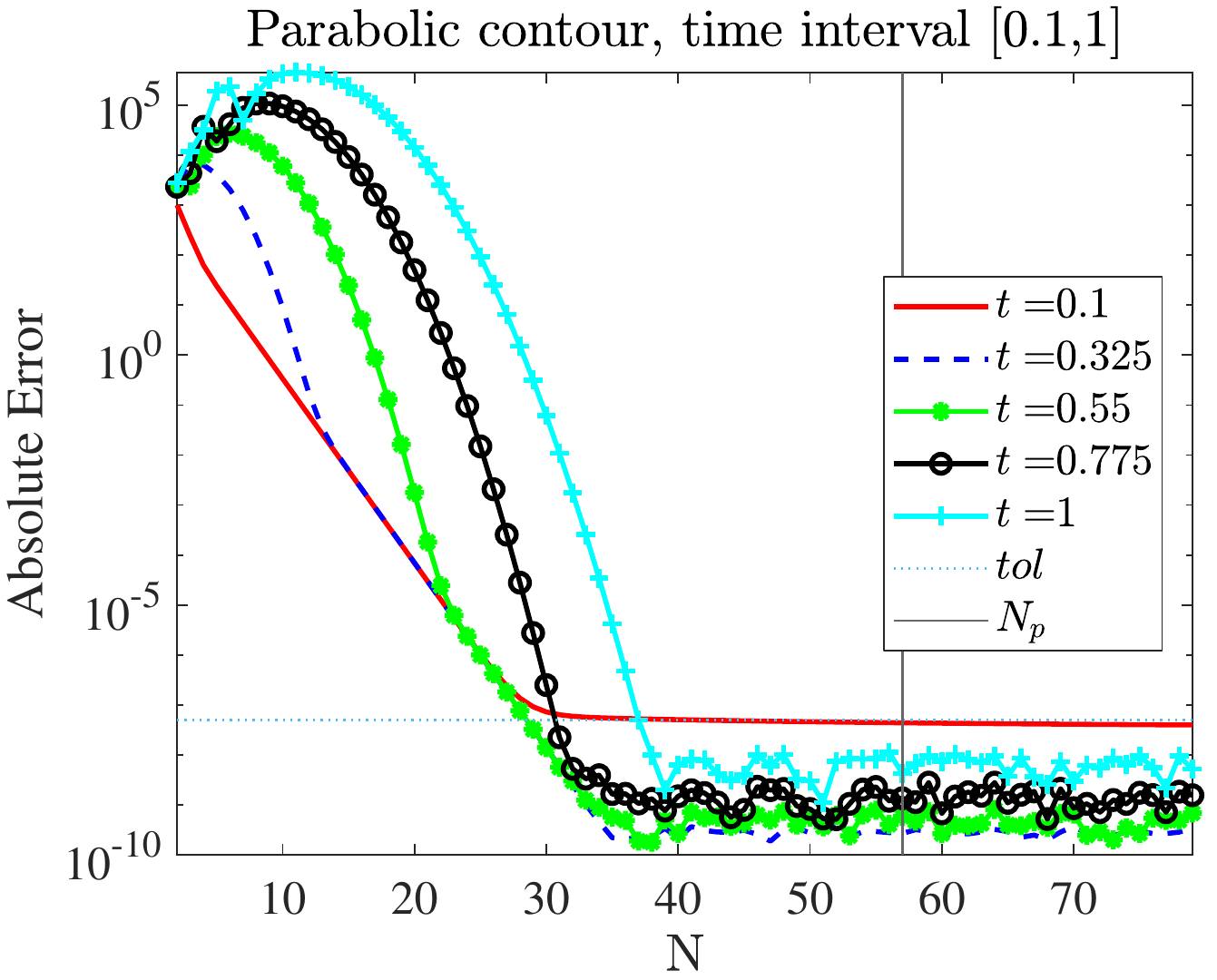}}
	}	
	\centering{
		\subfigure{
			\includegraphics[width=0.48\textwidth]{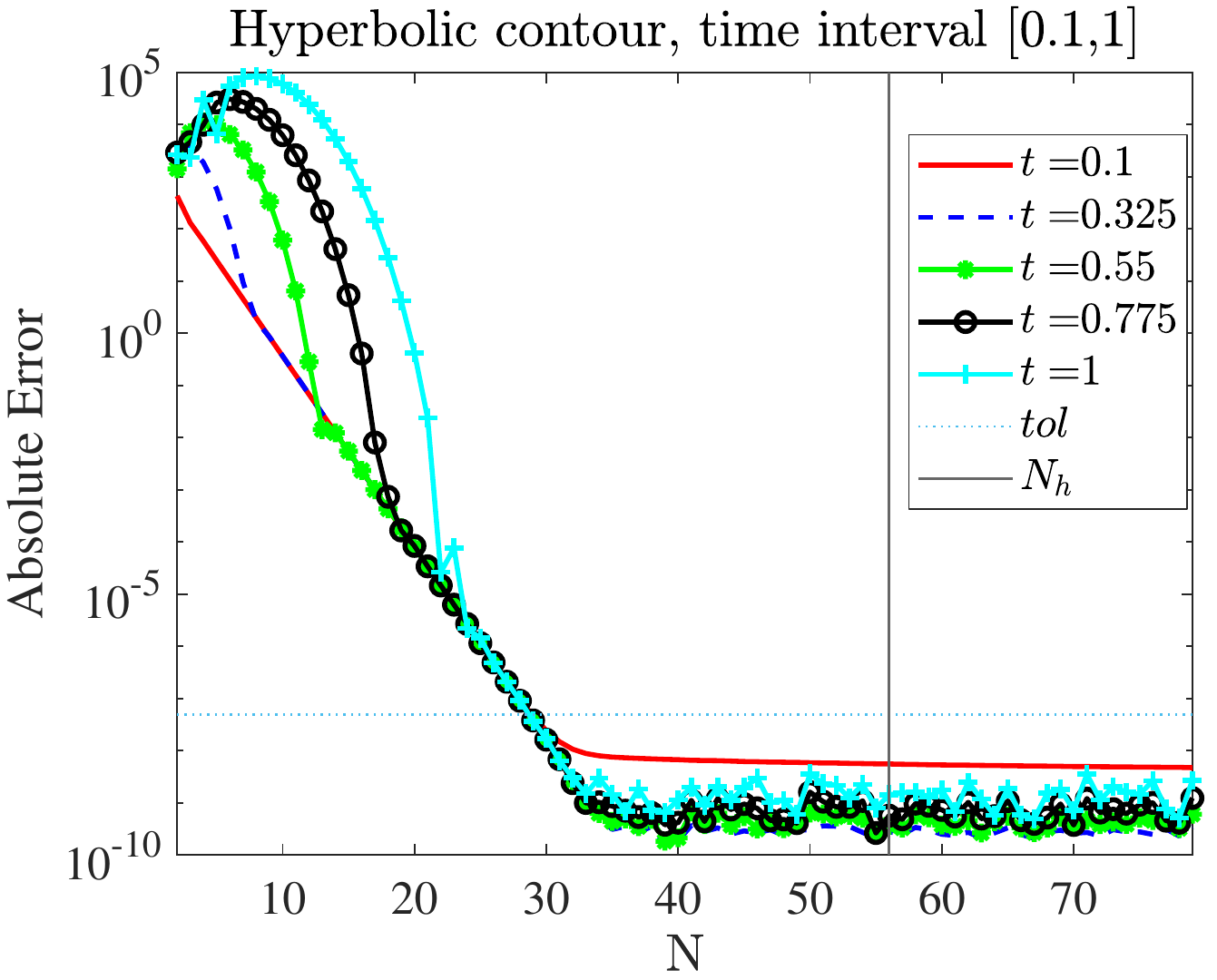}}
		
	}			
	
	\caption{Black-Scholes equation in time interval $[0.1,1]$, $\tol=5\cdot10^{-8}$, $z^L=-400$, $z^R=0.01$.}
	\label{fig5.7}
\end{figure}
\begin{figure}
	\centering{
		\subfigure{
			\includegraphics[width=0.48\textwidth]{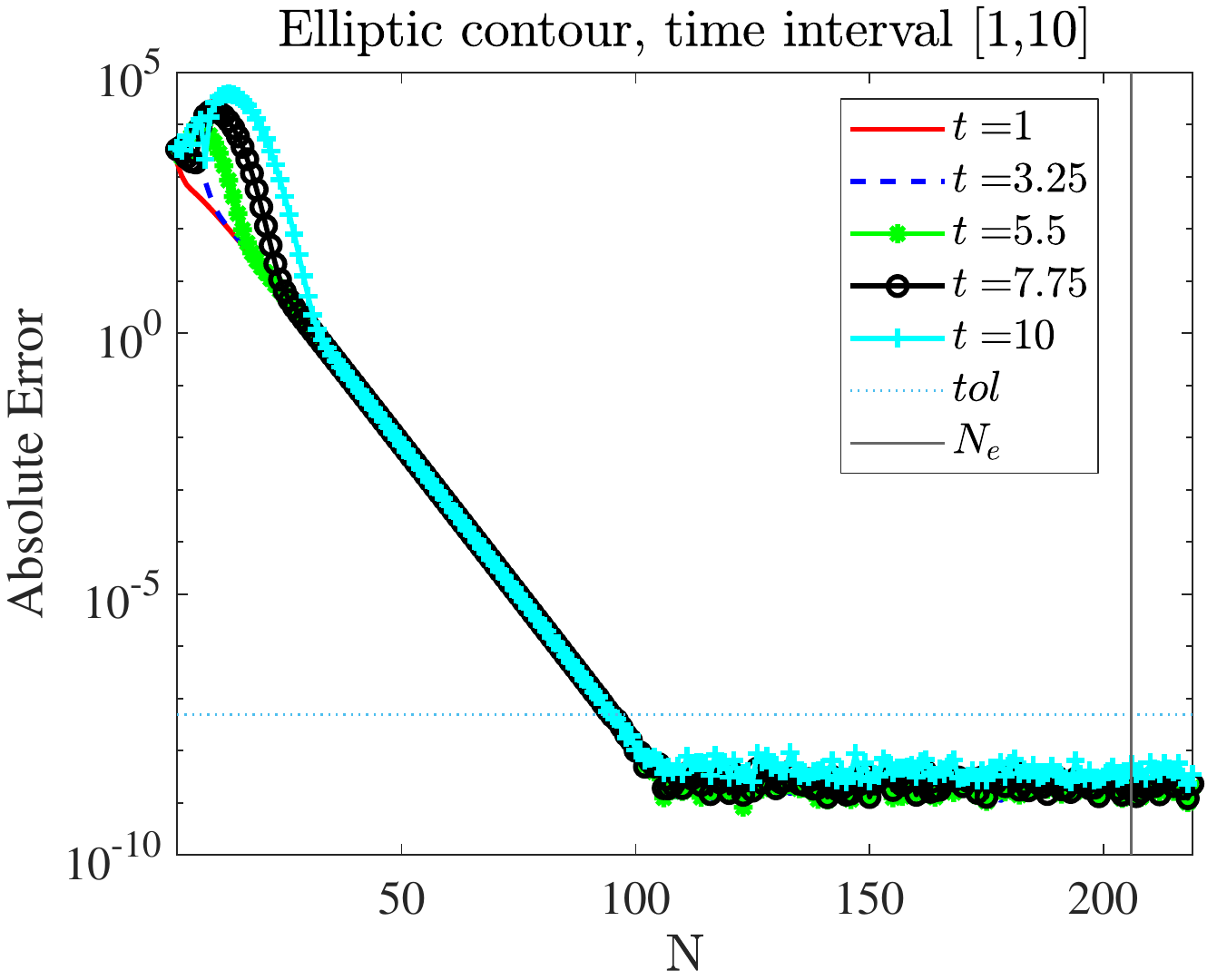}}
		\subfigure{
			\includegraphics[width=0.48\textwidth]{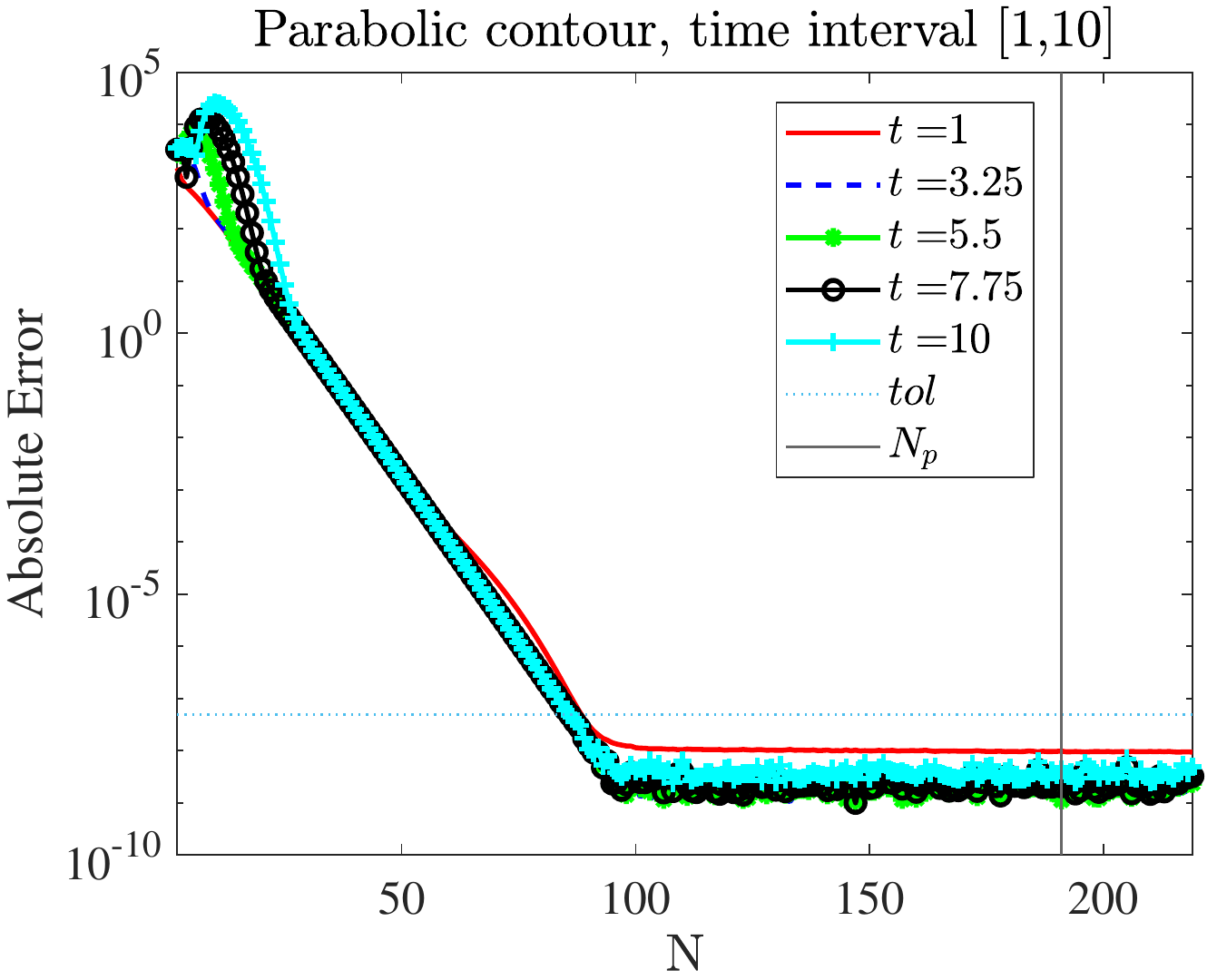}}
	}	
	\centering{
		\subfigure{
			\includegraphics[width=0.48\textwidth]{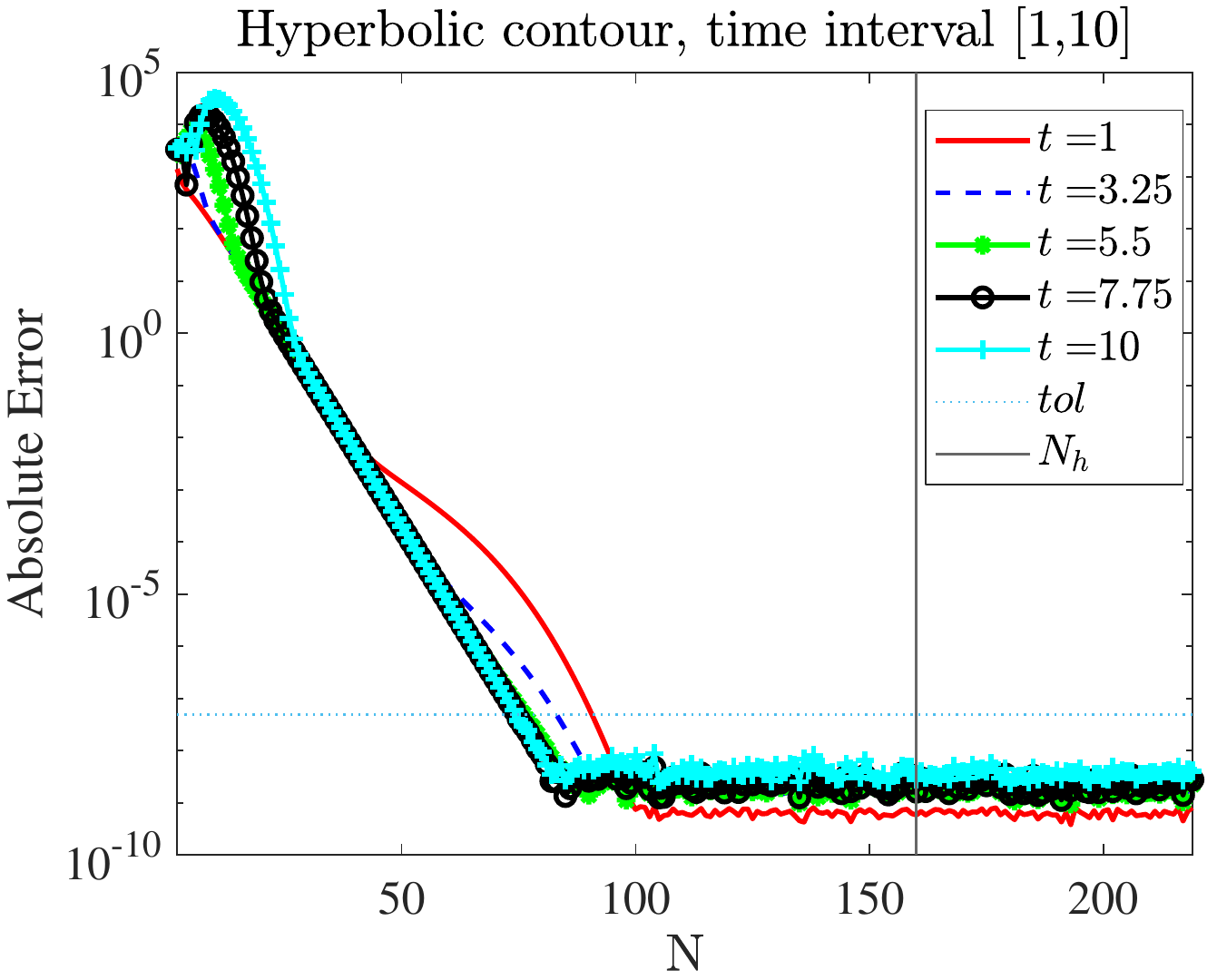}}
		
	}			
	
	\caption{Black-Scholes equation in time interval $[1,10]$, $\tol=5\cdot10^{-8}$, $z^L=-40$, $z^R=0.01$.}
	\label{fig5.8}
\end{figure}
\begin{figure}
	\centering{
		\subfigure{
			\includegraphics[width=0.48\textwidth]{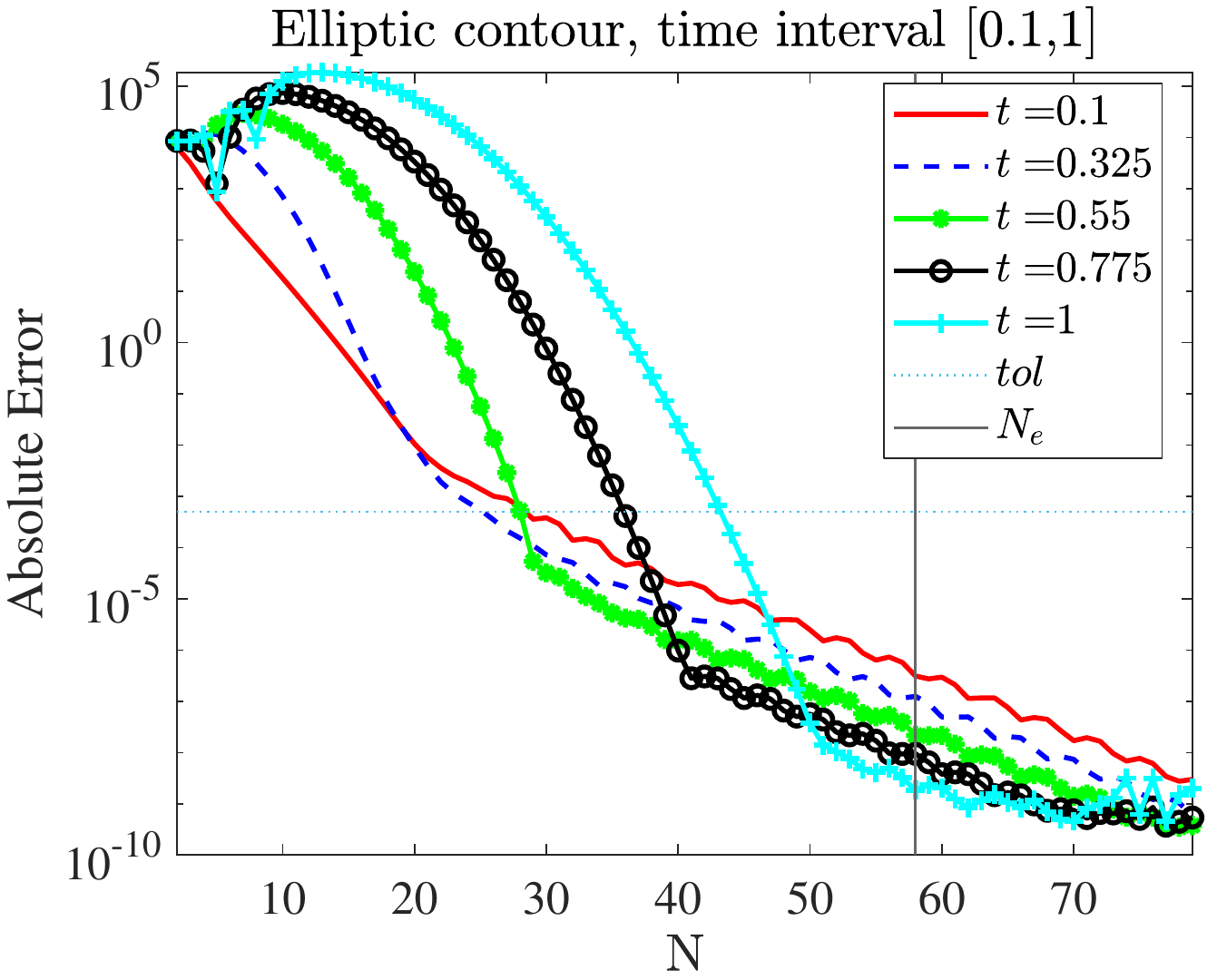}}
		\subfigure{
			\includegraphics[width=0.48\textwidth]{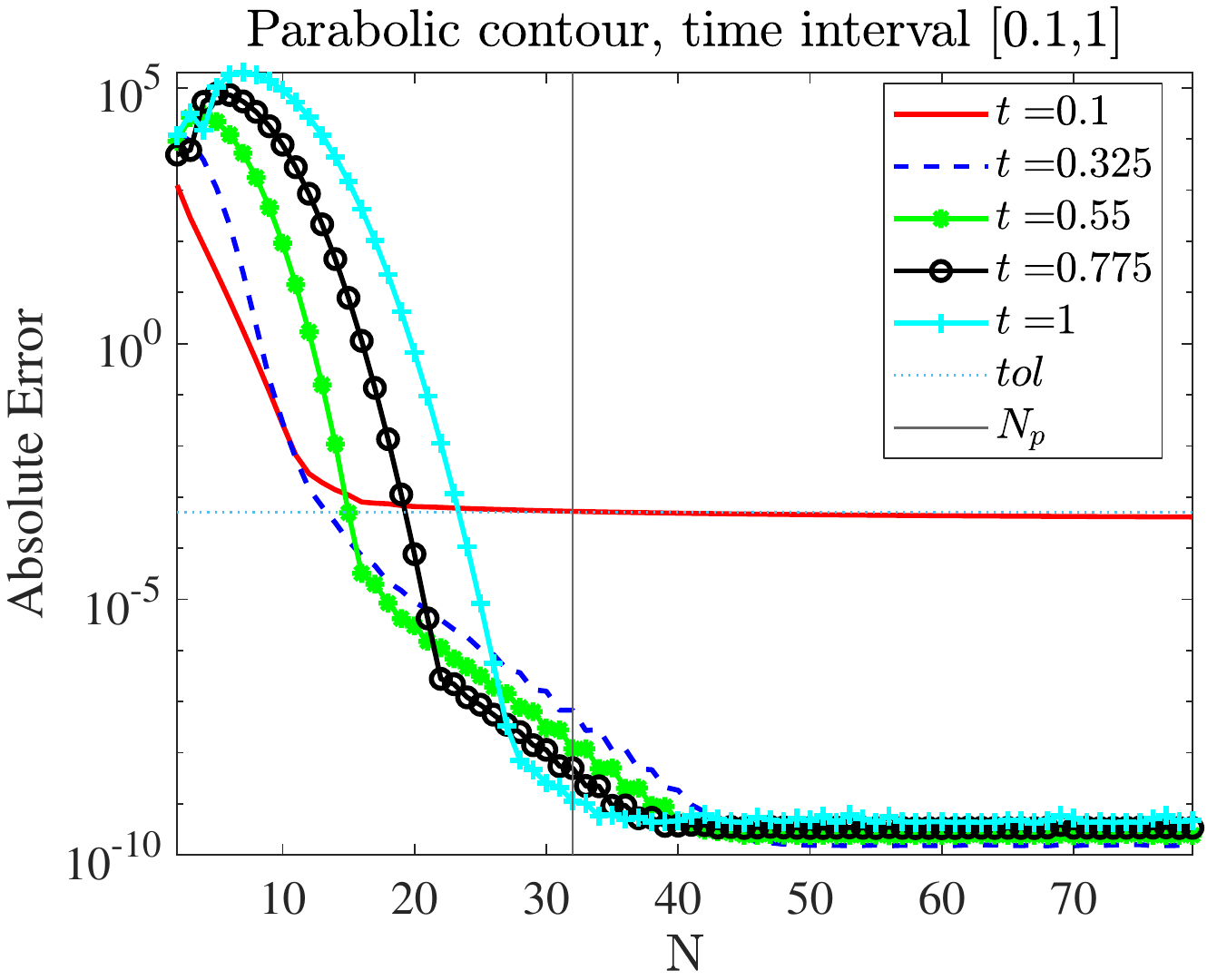}}
	}	
	\centering{
		\subfigure{
			\includegraphics[width=0.48\textwidth]{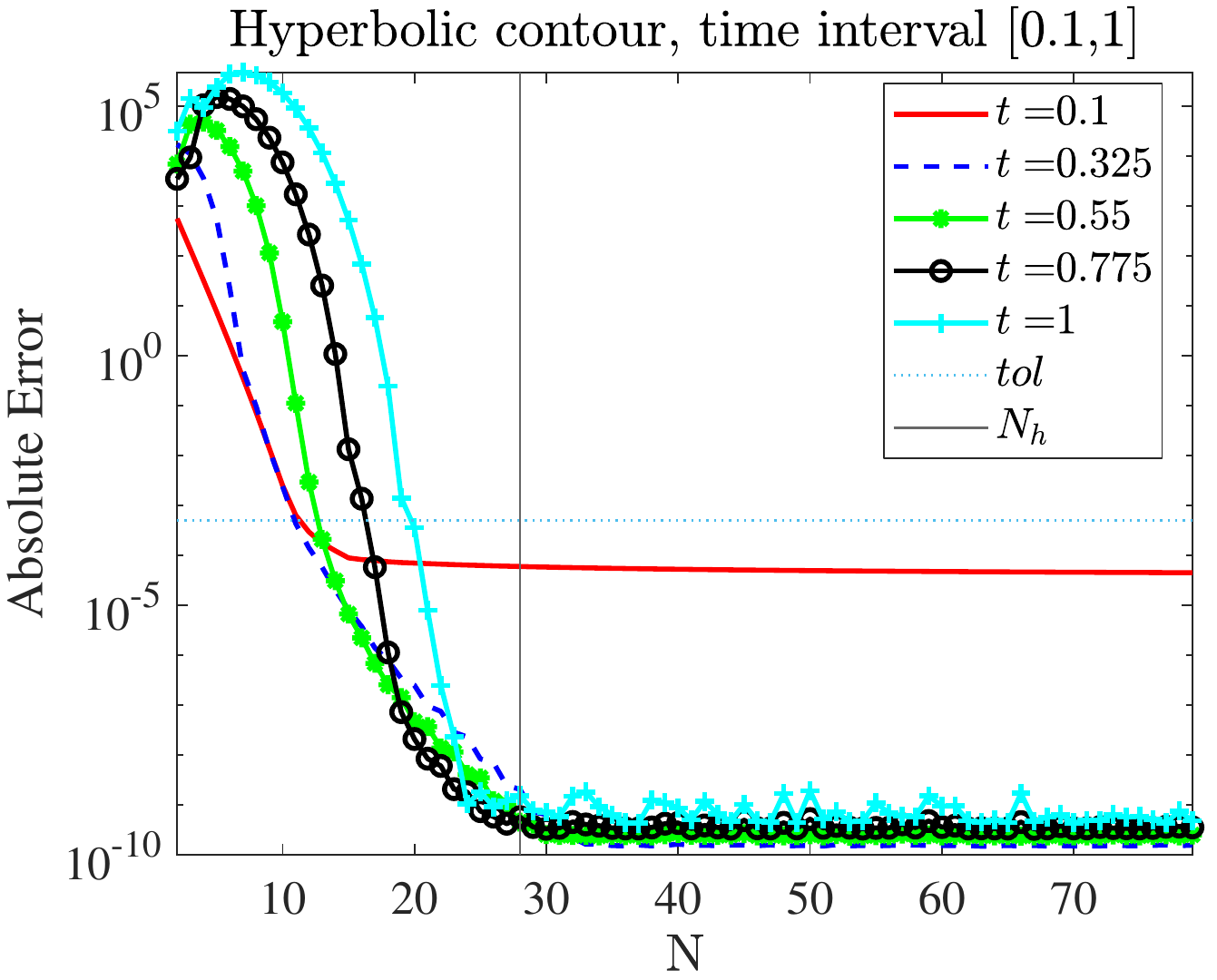}}
		
	}			
	
	\caption{Heston equation in time interval $[0.1,1]$, $\tol=5\cdot10^{-4}$, $z^L=-400$, $z^R=0.06$.}
	\label{fig5.9}
\end{figure}
\begin{figure}[ht]
	\centering{
		\subfigure{
			\includegraphics[width=0.48\textwidth]{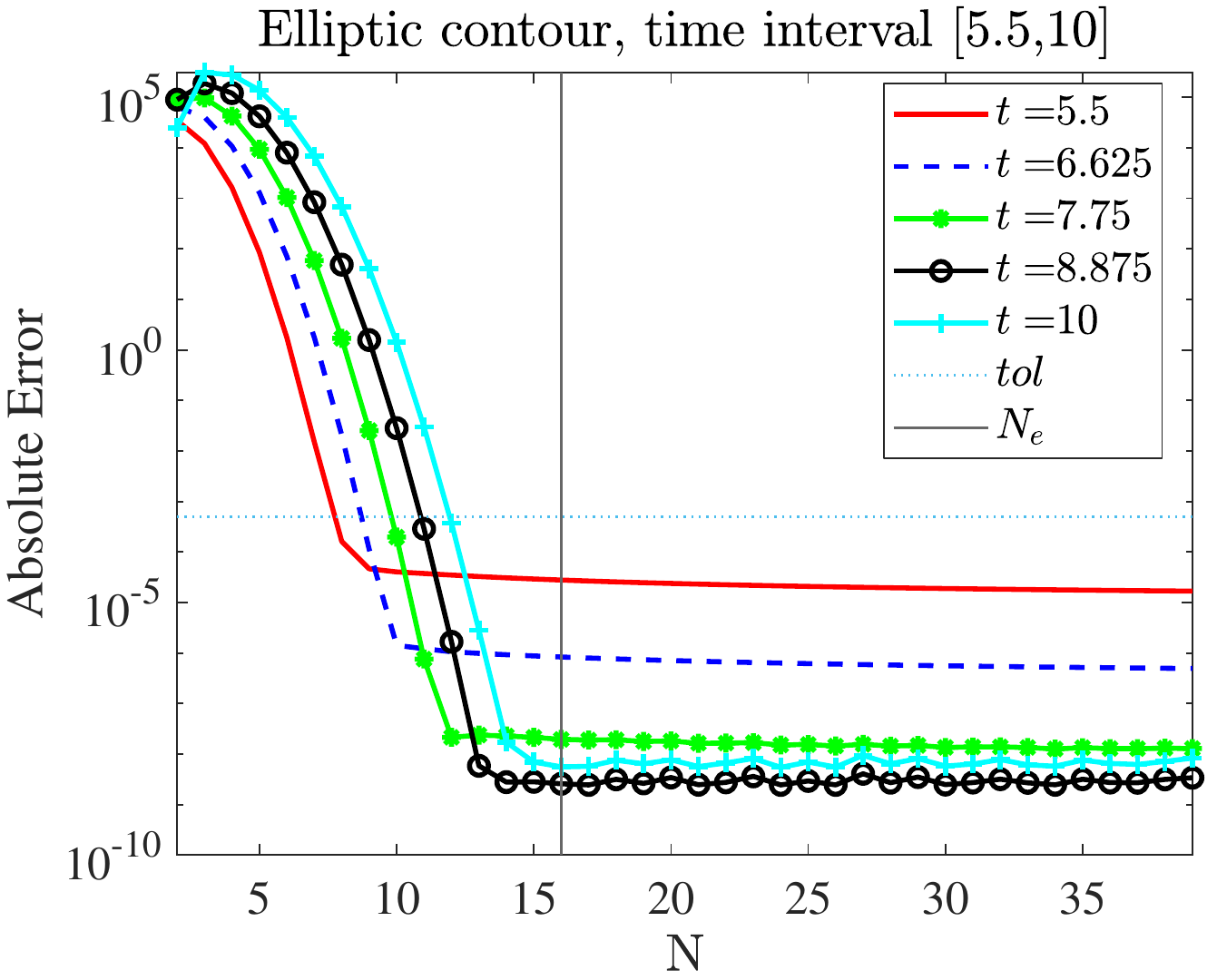}}
		\subfigure{
			\includegraphics[width=0.48\textwidth]{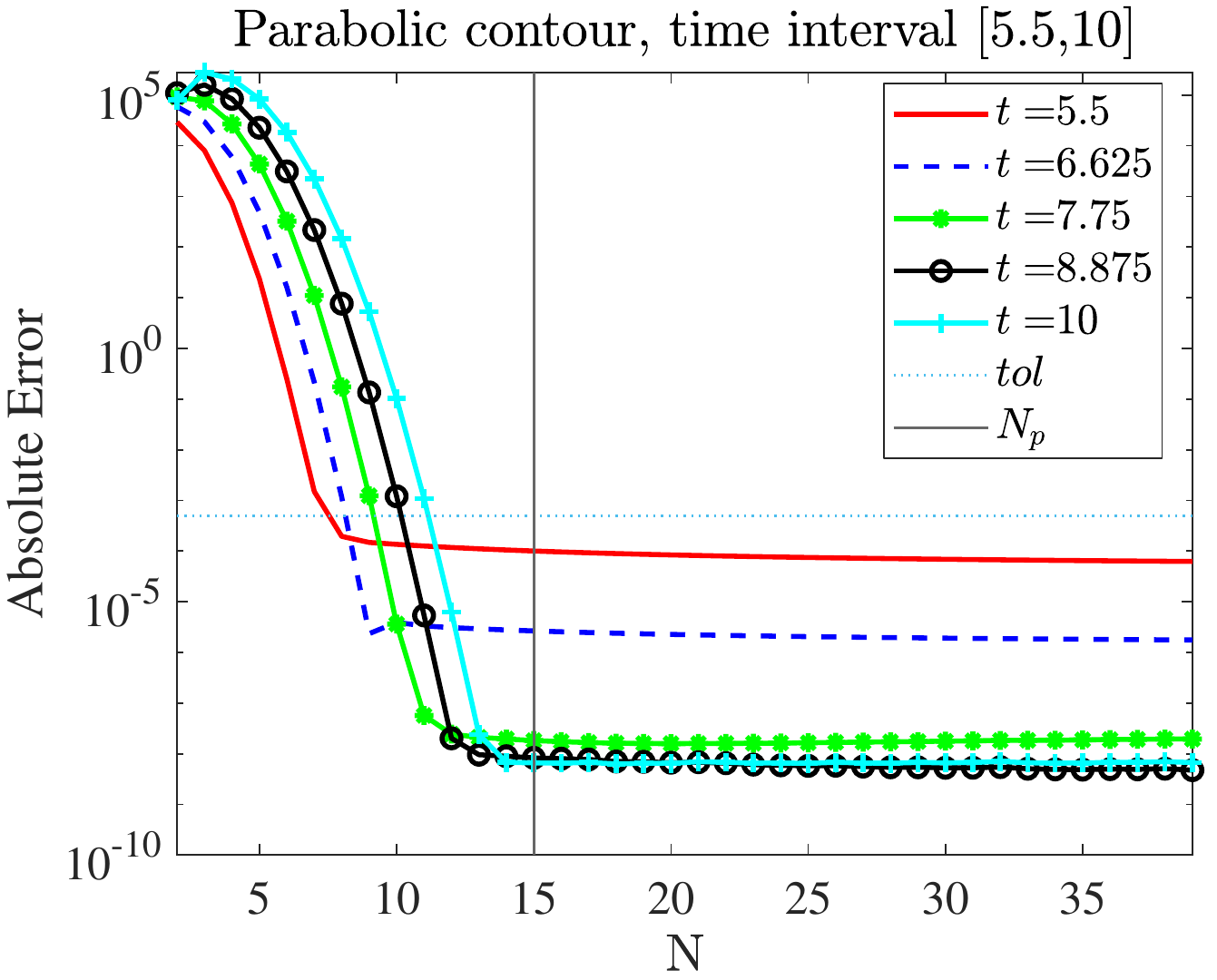}}
	}	
	\centering{
		\subfigure{
			\includegraphics[width=0.48\textwidth]{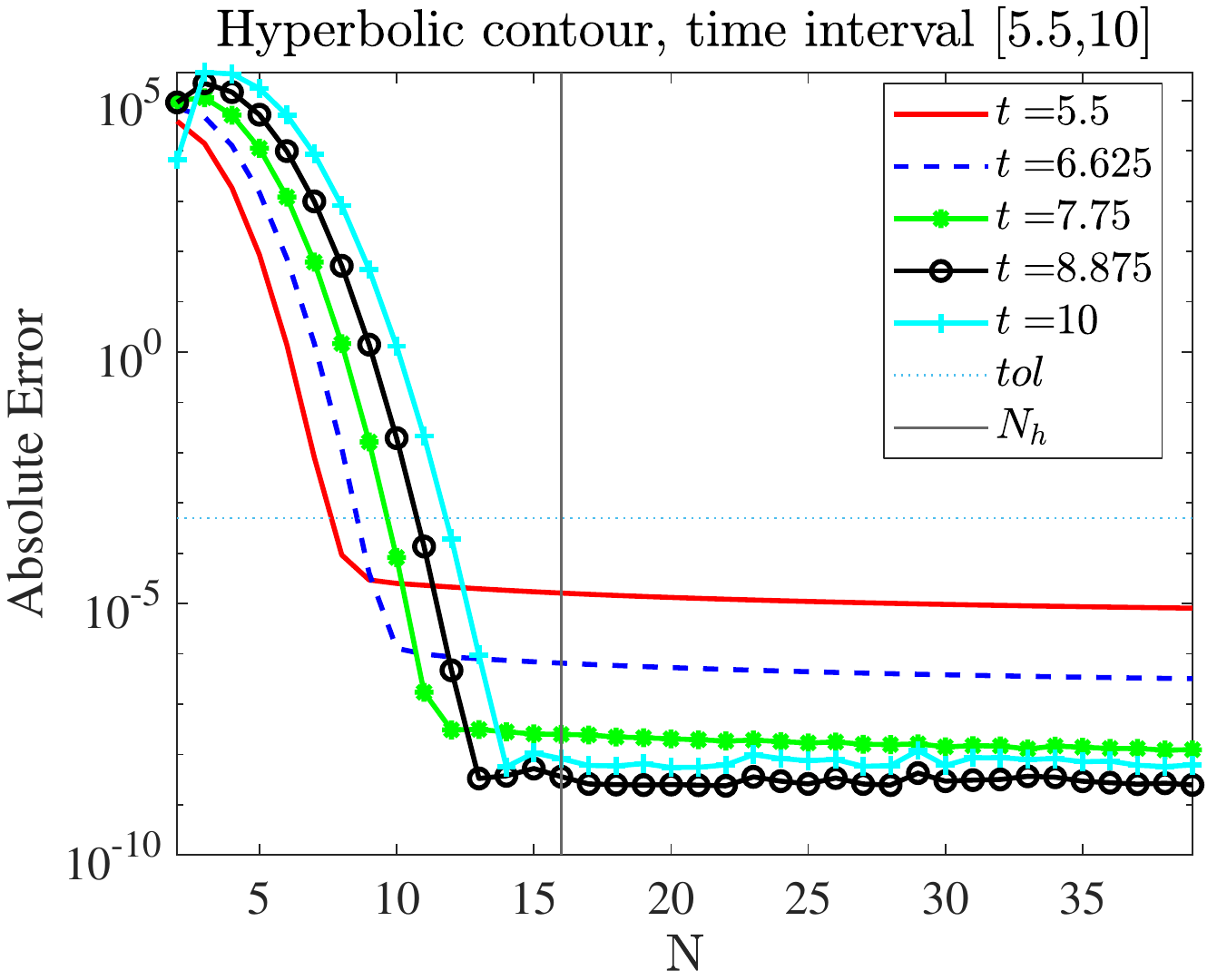}}
		
	}			
	
	\caption{Heston equation in time interval $[5.5,10]$, $\tol=5\cdot10^{-4}$, $z^L=-40$, $z^R=0.06$.}
	\label{fig5.10}
\end{figure}
\section{Implementation and codes} \label{sec:code}
The aim of this Section is to explain the structure and the main features of the code available at \cite{GLMcode}.
We assume that a space discretization is already available and thus we have the operator $A$, the Laplace transform
of the known term $\hat{b}$ and the initial solution $u_0$. The code provided includes three test problems:
a classical convection-diffusion problem, the Black and Scholes model and the Heston model.
\subsection{Construction of the inner curve and integration map}
The first step is the computation of the inner curve $\Gamma_{left}$, as described in Section \ref{sec:pseudospec};
then we determine the bandwidth of the map $a$ and the truncation parameter $c$.
Concerning $\Gamma_{left}$ there are some practical implementation remarks we want to mention.

First, given a discrete problem, we can use operators arising from coarser discretization to save computational effort in
evaluating the pseudospectra. Not so much is known about the behavior of the pseudospectra as the size of the operator increases.
It is reasonable to expect a behavior similar to the one of the condition number, i.e. that it increases as size increases.
Therefore the  magnitude of pseudospectra is most likely to be underestimated considering operators of smaller size. Anyway,
since we have exponential convergence with respect to $N$, it is sufficient to add few nodes to get a final solution under
the required accuracy.

Second, we consider the Newton iteration \eqref{eqNI}. At the beginning of the process it often happens that the perturbation
due to the gradient is such that $r^{\ell+1}\gg r^{\ell}$. If this is the case, we do the following update:
$r^{\ell+1}=r^{\ell}+pr^{\ell}$ with $p$ a suitable positive real number chosen by the user ($0.5$ in our problems).

Finally let us comment the choice of the set of points $\{ z_k \}$.
We define two grids of points, one finer than the other. We start by computing $\delta \epsilon$ in the coarser grid;
if $\delta \epsilon\le 0$ we consider the successive point in the same coarse grid. Otherwise we pass to the finer one and
we consider points there until $\delta \epsilon>0$.
This is all implemented in the functions
\begin{enumerate}
	\item \verb|Elliptic_Map|,
	\item \verb|Parabolic_Map|,
	\item \verb|Hyperbolic_Map|.
\end{enumerate}
The three functions have the same structure, it only changes the type of contour, therefore we limit our description to \verb|Parabolic_Map|.
\subsubsection{An example of implementation: the function \texttt{Parabolic\_Map}}
\addcontentsline{toc}{section}{6.1.1 An example of implementation: the function \texttt{PS parabolic profile}}
The following are the input and output arguments:
\begin{itemize}
	\item Input:
	\begin{enumerate}
		\item $A$ the discrete operator;
		\item $\hat{b}(z)$ Laplace transform of the known term;
		\item $u_0$ initial solution;
		\item $A_r$ operator of smaller size than $A$ used to compute the internal curve $\Gamma_{left}$;
		\item $\epsilon_1$ target value of the weighted pseudospectral level curve;
		\item $T$ time where to evaluate the solution;
		\item $n_X$ maximum number of points where I compute the pseudospectra;
		\item $z^L$ minimum real value;
		\item $z^R$ the internal parabola vertex position;
		\item $\tol$ accuracy required.
	\end{enumerate}
	\item Output:
	\begin{enumerate}
		\item $a_p$ uniquely defines the map, see Section \ref{sec:contour};
		\item $c$ truncation value of the integral, see Section \ref{sec:param};
		\item $a_1$ and $a_2$ defined in (\ref{a1para}) and (\ref{a2para}) respectively;
		\item $N$ number of nodes required to integrate with accuracy $\tol$.
	\end{enumerate}
\end{itemize}
The structure of the function is reported in Algorithm \ref{pp}.
\begin{algorithm}[t]
	\caption{\texttt{Parabolic\_Map}}\label{pp}
	\hspace*{\algorithmicindent} \textbf{Input:} $A,\;\hat{b},\;u_0,\;T,\;A_r,\;\varepsilon_1,\;t,\;nX,\;z^L,\;z^R,\;\tol$\\
	\hspace*{\algorithmicindent} \textbf{Output:} $a_p,\;c,\;N,\;a_1,\;a_2$
	\begin{algorithmic}[1]
		
		\STATE Construct $\Gamma_{left}$ according procedure described in Section \ref{sec:pseudospec};
		\STATE Compute $a$ by applying Algorithm~\ref{A2};
		\STATE Compute $c$ by applying Algorithm~\ref{A1};

	\end{algorithmic}
\end{algorithm}
\subsection{The integral approximation}
Once the map and the truncation value $c$ are known, the integration related to the numerical inversion of the Laplace transform can be performed. This is implemented in function \verb|InLa_Quadrature| whose structure is reported in Algorithm \ref{ppp}.
\begin{algorithm}[t]
	\caption{\texttt{InLa\_Quadrature}}\label{ppp}
	\hspace*{\algorithmicindent} \textbf{Input:} $A,\;\hat{b},\;u_0,\;T,\;c,\;z^L,\;N,\;a_1,\;a_2,\;flag$\\
	\hspace*{\algorithmicindent} \textbf{Output:} $u$
	\begin{algorithmic}[2]
		\STATE Use elliptic, parabolic or hyperbolic map $z(x)$ according to $flag$;
		\STATE Initialize $u=0$;
		\FOR {$j=\lceil N/2 \rceil\;\textit{to} \;N-1$}
		\STATE $x_j=-c\pi+j\frac{2c\pi}{N}$ ;
		\STATE $\hat{u}=(z(x_j)I-A)\backslash(\hat{b}(z(x_i)+u_0))$ ;
		\STATE $u=u+\e^{z(x_j)T}\hat{u}(x_j) z'(x_j)$ ;
		\ENDFOR
		\STATE $u=\frac{2c}{N}\imag(u)$;
	\end{algorithmic}
\end{algorithm}
The new variables introduced are: $u$ the output vector that contains the nodal values of the solution at time $t$ and $flag$, an input variable that specify which profile of integration has to be used: $flag=1$ to the elliptic profile, $flag=2$ the parabolic one and finally $flag=3$ the hyperbolic.

\subsection{Amplitude of the time window}
In Subsection \ref{subsec:TW} we have established that the amplitude for a time window $[t_0, t_1]$ is
acceptable if (\ref{6.1}) is smaller than $\tol$ when evaluated at $t_0$ and $t_1$.
To compute (\ref{6.1}) we first need to approximate the error in the numerical solution $\rho_j$, which
is not simple since it depends on many factors: the condition number of matrix $A_j=z(x_j)I-A$, the
perturbations in the matrix and in the r.h.s., the stability of the algorithm to solve the linear system
and machine precision of the solver.
Assuming the matrix and the r.h.s. are exact, we can approximate $\rho_j$ by means of the residual
associated to the solution of the linear system, i.e.
\begin{equation}
	A_j\rho_j= A_j(\hat{u}(z(x_j))-\hat{u}_j)=b-A_j(\hat{u}_j)=r_j,
\end{equation}
so that
\begin{equation}\label{7.1}
	\|\rho_j\|\le\|A_j^{-1}\|\|r_j\|.
\end{equation}
Expression (\ref{7.1}) needs the evaluation of the numerical solution at the nodes $x_j$ and the computation
of the smallest non zero singular value of $A_j$.
Since $\kappa(A_j)$ (and $\|A_j^{-1}\|$) may change significantly when evaluated at different nodes, we can
expect that (\ref{6.1}) depends only weakly on $N$. This fact allows us to use only few nodes to evaluate (\ref{6.1}), which results into a smaller computational
effort with respect Algorithm \ref{ppp}. However, the extra computational effort is justified by the fact that the solution is made available on a
continuous time window with an error below the prescribed accuracy for all $t\in[t_0, t_1]$.

\section{Conclusions}

In this article we present significant algorithmic developments of the method in \cite{GLN20} for the
approximation of convection-diffusion problems by means of the inverse Laplace transform.
The main achievements are the extension to more general quadratic contour curves, the
setting of a novel method to roam pseudospectral level sets - which makes the whole algorithm faster and independent of
the code Eigtool- and the extension of the method to approximate the solutions to the PDEs at time windows of suitable length to achieve a given target accuracy.
\subsection*{Acknowledgements}
NG acknowledges that his research was supported by funds from the
Italian MUR (Ministero dell'Universit\`a e della Ricerca) within the PRIN 2017
Project ``Discontinuous dynamical systems: theory, numerics and applications''
and by the INdAM Research group GNCS (Gruppo Nazionale di Calcolo Scientifico).

MLF acknowledges partial support by INdAM-GNCS and the Spanish grant MTM2016-75465-P.

\subsection*{Data availability}

The codes implementing the algorithms discussed in this article are publicly available at:\\
 \url{https://github.com/MattiaManucci/Contour\_Integral\_Methods.git}.

No other data are associated to the manuscript.

\end{document}